\documentclass[a4paper,10pt]{amsart}

\usepackage{extensions}
\begin{document}
\title[]{Law of large numbers for stochastic multiscale spatial gene networks}

\author[A. Debussche, B. Huguet]{Arnaud Debussche, Baptiste Huguet} 
\address{Univ Rennes, CNRS, IRMAR - UMR 6625, F-35000 Rennes, France}
\email{baptiste.huguet@math.cnrs.fr}
\email{arnaud.debussche@ens-rennes.fr}
\urladdr{https://www.math.cnrs.fr/~bhuguet/}
\begin{abstract}
We study a stochastic multiscale spatial gene network. These naturally arise in molecular biology.  In our model, the reactants are subject to on-site reactions on both scales and diffusion on the continuous scale only, although diffusion on both scales could easily be handled. We obtain, under a light condition on the scales between the total population size and the mesh discretisation, the convergence of the stochastic system to a deterministic system consisting of a PDE coupled to a ODE. This is in contrast with the well-stirred case where jumps remain at the limit. In order to prove this convergence result, we develop some moments control for martingales in discrete Sobolev topologies and use products rule in discrete Sobolev spaces.
\end{abstract}
\maketitle
\setcounter{tocdepth}{1}
\tableofcontents

\section{Introduction}

In molecular biology, a gene regulatory network is the system of reactions which regulates the expression of genes, the mRNA transcription and the proteins' translation. These networks are modelled using Markov jump processes, building on Delbr\"uck's seminal work \cite{Delbr}. Since then, the stochastic nature of gene networks has been emphasized by experimental biology \cite{Kepler}. Specifically, stochastic models replicate experimentally observed behaviours, including the burst production of proteins \cite{Cai}, the phenotype variability in isogenic population \cite{Kaern}, the noise propagation \cite{Paul}, or the circadian clocks stability \cite{Bark}. However, Markov models implementation is computationally  challenging. Gillespie algorithm, also referred to as Stochastic Simulation Algorithm (SSA), is not effective for real systems with numerous reagents or for systems that are highly interconnected. On the other hand, the exact number of molecules of each reagent is irrelevant when dealing with large populations. We are more interested in concentrations, which can be thought of as the ratio of the population of a species divided by the total population. The convergence of Markov models, in a large population limit, has been first studied for a long time, especially by Kurtz, for homogeneous (or well-stirred) case. Kurtz proves a law of large numbers, a central limit theorem, and a diffusion approximation, in probability, for the uniform topology (\cite{Kurtz70}, \cite{Kurtz71} and \cite{Kurtz78}, see also \cite[Chapter 11]{EthKur} for simple proofs). This homogeneous mono-scale case is still of mathematical interest nowadays, for instance for the study of large deviation principles \cite{ParSK}. The convergence of non-homogeneous models, with spatial dependence, has been studied by Arnold and Thedosopulu \cite{Arnold80dlo}. The associated stochastic models have two kinds of jumps : reaction jumps (as in the homogeneous case), and diffusion jumps of a molecule to an adjacent site, with local rate.  They proved convergence under a high density assumption : $l/N^2\to\infty$, where $l$ denotes the total population scale per site and $1/N$ the size of the mesh. These results have been developed by Kotelenez (\cite{Kotelenez86lol} and \cite{Kotelenez87fnh}) and further improved by Blount (\cite{Blount91} and \cite{Blount92}), who used the regularisation properties of the (discrete) heat semigroup to relax the high density assumption. In particular, he proved the convergence in $L^\infty$ under the density assumption $l/\log(N)\to\infty$.

A particularity of gene networks, amongst other biological systems, is the coexistence of different orders of magnitude for the number of reactants. For some molecules in abundance, the concentration is the relevant information, but for others, in scarce quantity, the concentration is null. That means that we must renormalise the system with different scales, depending on the type of reactant. In the homogeneous - or well-stirred - setting, Crudu, the first author, and Radulescu introduce a hybrid piecewise deterministic Markov process (PDMP), coupling ODE and jump process, as a model for gene network in \cite{CDR}. In a second article \cite{CDMR} with Muller, they prove that the multi-scale renormalised Markov model converges in distribution to a PDMP in various realistic biologic situations. This result has been improved by the second author in \cite{Huguet25sas}, with a strong convergence and a central limit theorem in distribution. The case with spatial dependance has been investigated by the first author and Nguepedja Nankep. The stochastic multi-scale spatial models considered have two scales. The first one, the "continuous scale", has an abundant population (of order $l$), fast reactions (rates of order $l$), and diffusion (rates of order $lN^2$). The second one, the "discrete scale", has a rare population (of order $1$), with slow reaction (rates of order $1$) and no diffusion.  The biological assumption, also used for mono-scale models, is the "one-site reaction" : a jump occurs in a mesh with a rate depending on the population of this mesh only. Under this assumption, the discrete scale presents a real obstruction to convergence. Indeed, the discrete part of the limit process is, formally, the solution of an ODE, and so, is a continuous process. This problem has been treated in two directions. In \cite{DNN19}, the products of a discrete jump are spread over the adjacent meshes, according to a distribution kernel. This means that, even if it has jumps of order one, this scale is no longer discrete. This enables convergence in the supremum norm. It has to be noted that this situation is not biologically realistic.  In a second article, \cite{DNN21}, the meshes of the discrete scale are constant : this scale has macro-site and its jumps only depend on these sites. The limit process is a PDE, for the continuous scale, coupled with a jump process, for the discrete scale. This PDMP limit is mathematically interesting and is closer to biology but is not really relevant. 

The goal of the present work is to prove the convergence of the stochastic model under the natural on-site reaction assumption. In order to address the issue of making a discrete process converge to a continuous one, we need to relax the topology on the discrete scale and prove a convergence in negative Sobolev norm. However, it has two costs. The first one is a constraint on the model itself. In order to prove that the limit process is a solution to the limit equation, the rates must be continuous, in the discrete scale, for the Sobolev topology, and so, they must be linear in the discrete scale. The second cost relates to the continuous scale. Indeed, by a duality argument, a weak convergence on the discrete scale must be compensated with a stronger convergence on the continuous scale, with a convergence in a positive Sobolev topology. This suggests that the density assumption $l/\log(N)\to \infty$ will not be strong enough to ensure the convergence. Note in a different context, Blount has also considered convergence in negative Sobolev norms in \cite{Blount91}.

Let us summarise the content of our article. In Section \ref{sec:model}, we present our stochastic model and our main convergence result. We also introduce the functional framework. In section \ref{sec:v}, we prove that the discrete scale is tight. It proceeds from a martingale control in low regularity and does not involve any density assumption. In section \ref{sec:u}, we control the martingale part of the continuous scale when smoothed by the discrete heat kernel. The main tool there is a bound for exponential moments of a function-valued martingale. It is the key step from where our new density assumption arises.  We conclude the proof of convergence in Section \ref{sec:convergence}. It mixes probabilistic convergence arguments and numerical analysis tools in low regularity. Note that we perform our analysis in the case of a single continuous species and a single discrete species but this is is no loss of generality. Also, we work in the one dimensional and periodic case. Extension to other boudary conditions is no problem while higher dimension can be treated at the price of further assumptions, see Remark (see Remark \ref{r}).

\section{The spatial stochastic gene network}
\label{sec:model}
\subsection{The stochastic model}
For $N\geq 1$, we denote by $I_j = [j/N, (j+1)/N[$, for $0\leq j\leq N-1$ a partition of $[0,1[$, with the convention $I_{-1} = I_{N-1}$ and $I_N = I_0$. We are interested in a system of two reactants, denoted as $C$ and $D$, living on the $[0,1]$, with periodic boundary condition. The reactants $C$ are abundant; let $l>1$ be the order of magnitude of $C$ reactants in a cell $I_j$. This parameter may depend on $N$. On the other hand, the reactants $D$ are rare : the number of $D$ reactants in a cell is of order $1$. In total, there are about $lN$ reactants $C$ and $N$ reactants $D$ in the system. The notation $C$ refers to "continuous scale" (or "concentration scale") as, for this species, the relevant information is total number of particle renormalised by the order of magnitude $l$. The notation $D$ refers to the "discrete scale".  For all $0\leq t$ and $0\leq j\leq N-1$, we denote by $lu^N_j(t)$ and $v^N_j(t)$ the number of reactants $C$ and $D$, respectively, in the cell $I_j$, at time $t$.

These two species are subject to a finite number of on-site reactions $(E_r)_{r\in\RR}$, characterised by their jump sizes $(\gamma_r)_{r\in\RR}$ in $\N^*\cup\{-1\}$, and their rates $(\lambda_r)_{r\in\RR}$. There are tow kinds of reactions $\RR = \RR_C\cup\RR_D$, which transform the system in the following way
\[\left\{\begin{array}{lcc}
(u^N_j(t), v^N_j(t)) \longrightarrow (u^N_j(t)+\gamma_r/l, v^N_j(t)) & r\in\RR_C\\
(u^N_j(t), v^N_j(t)) \longrightarrow (u^N_j(t), v^N_j(t)+\gamma_r) & r\in\RR_D\\
\end{array}\right.\]
Each reaction $E_r$ occurs in the cell $I_j$ at rates $\lambda_r(u^N_j, v^n_j)$. 

In the following, we assume that the rates satisfies the following properties.

\begin{ass}\label{ass:rate}
For all $r\in\RR$, we have 
\[\lambda_r (u,v) =\left\{ \begin{array}{lcl}
a_r uv + b_r(u) + d_rv  & \text{if}& r\in\RR_C, \gamma_r>0\\
a_r uv + b_r(u)   & \text{if}& r\in\RR_C, \gamma_r=-1\\
d_rv + b_r(u) & \text{if}& r\in\RR_D, \gamma_r>0\\
d_rv  & \text{if}& r\in\RR_D, \gamma_r=-1\\
\end{array}\right.\]
with $a_r,d_r\geq 0$, and $(b_r)_{r\in\RR}$ are non-negative, $\CC^1$ functions such that $b_r(0)=0$ whenever $\gamma_r=-1$.
\end{ass} 
As already mentionned, $v^N$ is controlled only in negative topologies. Therefore the rates have to be linear with respect to $v$. Assumption \ref{ass:rate} is stronger since we only consider terms of the form $uv$ for reactions in 
$\RR_C$ and $v$ in $\RR_D$. We need to assume this for technical reasons, in particular in the proof of tightness of
$v^N$ performed in section \ref{sec:v}.

We denote by $|\gamma|$ the largest jump size
\[|\gamma|=\max_{r\in\RR}|\gamma_r|.\] 
We also denote by $R_i$, $a_i$, $b_i$, and $d_i$, for $i\in\{C,D\}$, the functions and constants defined as 
\[R_i(u,v) = \sum_{r\in\RR_i} \gamma_r \lambda_r(u,v)= a_iuv+b_i(u)+d_iv, \quad\forall (u,v)\in\R^2.\]

Furthermore, we add a confining field assumption on $R_C$
\begin{ass}\label{ass:RC}
There exists $M>0$ such that for all $u>M$, $v\geq 0$, $R_C(u,v)<0$.
\end{ass}

This assumption means that $a_C<0$, and $b_C$ is negative outside a compact. Together with the on-site reactions, the $C$ species are subject to diffusion. This means that, the system undergoes the following transitions, both with rate $lN^2u^N_j$,
\[\left\{\begin{aligned}
(u^N_{j-1}(t), u^N_j(t)) &\longrightarrow (u^N_{j-1}(t)+1/l, u^N_j(t)-1/l)\\
(u^N_j(t), u^N_{j+1}(t)) &\longrightarrow (u^N_{j}(t)-1/l, u^N_{j+1}(t)+1/l)\\
\end{aligned}\right.\]
The $D$ species are not subject to diffusion and only evolve through on-site reactions. 

Let us define a sequence of stochastic processes associated to these transitions. Let $H^N$ be the space of step function on $[0,1[$, $\R$-valued, constant on the intervals $(I_j)_{0\leq j\leq N-1}$. Let $(P_{r,j})$, $(P_{j^+})$, and $(P_{j^-})$ denote independent Poisson processes with intensity $1$. We define a jump process $(u^N,v^N) : \R_+\times[0,1]\to H_N\times H_N$ as
\[u^N(t,x) = \sum_{j=0}^{N-1}\ind_{I_j}(x)u^N_j,\quad v^N(t,x) = \sum_{j=0}^{N-1}\ind_{I_j}(x)v^N_j,\]
such that
\begin{equation}\label{eq:uNvN}
\left\{\begin{aligned}
u^N(t,x) 
=& u^N(0,x) + \sum_{j=0}^{N-1}\ind_{I_j}(x)\sum_{r\in\RR_C}\frac{\gamma_r}{l}P_{r,j}\left(l\int_0^t \lambda_r(u^N_j(s),v^N_j(s))\, ds\right)\\
& + \sum_{j=0}^{N-1}\frac{\ind_{I_{j+1}}(x)-\ind_{I_j}(x)}{l} P_{j^+}\left(lN^2\int_0^tu^N_j(s)\,ds\right)\\
& + \sum_{j=0}^{N-1}\frac{\ind_{I_{j-1}}(x)-\ind_{I_j}(x)}{l} P_{j^-}\left(lN^2\int_0^tu^N_j(s)\,ds\right)\\
v^N(t,x) =& v^N(0,x) +  \sum_{j=0}^{N-1}\ind_{I_j}(x)\sum_{r\in\RR_D}\gamma_r P_{r,j}\left(\int_0^t \lambda_r(u^N_j(s),v^N_j(s))\, ds\right)\\
\end{aligned}\right.
\end{equation}

In order for this process to be well-defined, we need to prove that the jump rates do not explode. With our assumptions, it is sufficient to show that the process is bounded. Yet, we do not have this information a priori. There are two ways to deal with this issue. The first one is to add a cemetery state and to define the process as stationary after explosion. The second one, consists in truncating the jumps after a stopping time, and in working with this auxiliary process. This is the approach we use. The well-posedness of our original process without truncation will be a by-product of the convergence of the truncated process to a bounded limit.

In order to study these processes, we  emphasise their martingale parts. Indeed, if $h$ is an adapted process, and $P$ is a Poisson process, the process
\[t\mapsto P\left(\int_0^t h_s\, ds \right) -\int_0^t h_s\, ds \] 
is a local martingale. So, we have the following decomposition 
\[\left\{ \begin{aligned}
u^N(t,x) &= u^N(0,x) + Z^N_C(t,x) + \int_0^t R_C(u^N(s,x), v^N(s,x))\, ds + \int_0^t \Delta_N u^N(s,x)\, ds\\
v^N(t,x) &= v^N(0,x) + Z^N_D(t,x) + \int_0^t R_D(u^N(s,x), v^N(s,x))\, ds\\
\end{aligned}\right.\]
where $Z^N_i$ are $H_N$-valued local martingales, and $\Delta_N$ denotes the discrete Laplace operator on $H_N$
\begin{equation}\label{eq:DeltaN}
\Delta_N f(x) = N^2\left(f(x+1/N) - 2f(x) + f(x-1/N)\right),\quad\forall f\in H_N.
\end{equation}

The goal of this article is to prove the convergence, for an appropriate topology, of the sequence $(u^N,v^N)$ to the solution of the coupled equations
\begin{equation}\label{eq:uv}
\left\{ \begin{aligned}
\partial_t u &= \Delta u + R_C(u,v),\\
\partial_t v &= R_D(u,v).\\
\end{aligned}\right.
\end{equation}
These are supplemented by initial conditions and to periodic boundary conditions. 

\subsection{Classical and discrete Sobolev spaces}
In order to prove the convergence of $(u^N,v^N)$, we introduce a Sobolev structure on $H_N$. Let us explain its construction and its basics properties. For more details on discrete Sobolev norms, see \cite{Blountphd}.

Let us denote by $L^2$ the space of square integrable functions on $[0,1]$, and $\langle\cdot,\cdot\rangle$ its canonical inner product.  For $m\in\N^*$, we define the functions
\begin{align*}
\varphi_0 : &x\in[0,1]\mapsto 1\\
\varphi_m : &x\in[0,1]\mapsto \sqrt{2}\cos(2\pi m x)\\
\psi_m : &x\in[0,1]\mapsto \sqrt{2}\sin(2\pi m x)\\
\end{align*}

The family $\left\{\varphi_m,\psi_m\right\}$ is a complete orthonormal family in $L^2$ of eigenfunction for the Laplace operator $\Delta$, with periodic boundary conditions. Let us denote $\lambda_m = (2\pi m)^2$, then we have $\Delta e_m = -\lambda_m e_m$, for $e_m\in\left\{\varphi_m,\psi_m\right\}$. 

For $0<\alpha$, we define the Sobolev spaces $H^\alpha$ as 
\[H^\alpha = \left\{f\in L^2 / \|f\|_{H^\alpha}^2 =\sum_{0\le m} (1+\lambda_m)^\alpha\left(\langle f, \varphi_m\rangle^2  + \langle f, \psi_m\rangle^2\right) <\infty\right\},\]
where we set $\psi_0 = 0$. Its dual space, denoted $H^{-\alpha}$, is the completion of $L^2$ for the norm
\[\|f\|^2_{H^{-\alpha}} = \sum_{0\le m} (1+\lambda_m)^{-\alpha}\left(\langle f, \varphi_m\rangle^2  + \langle f, \psi_m\rangle^2\right).\]
We denote by $(\cdot,\cdot)$ the duality product. The semigroup generated by $\Delta$ on $L^2$, is denoted by $e^{\Delta t}$. It is can be extended to $H^\alpha$ for $\alpha\in\R$ and is a contraction.

Similarly to the classical construction, the discrete Sobolev structures on $H_N$ relies on the eigenfunctions of a discrete Laplace operator. In the following, assume, for simplicity, that $N$ is an odd integer.  We define the orthogonal projection $P_N : L^2 \to H_N$ as 
\[P_Nf(x) = N \int_{j/N}^{(j+1)/N} f(y) dy,\quad \forall x\in I_j.\] 
The space $H_N$ is a subspace of $L^2$ and inherits its Hilbert structure. We denote by $\|\cdot\|_{H_N^0}$, the restriction to $H_N$ of the $L^2$-norm. Then, we define the discrete gradients $\nabla_N^+$ and $\nabla_N^-$, on $H_N$ as 
\[\nabla_N^\pm f(x) = N\left( f(x\pm 1/N) - f(x)\right),\quad\forall f\in H_N.\]
Then, the discrete Laplacian $\Delta_N$, defined in Equation \eqref{eq:DeltaN}, satisfies 
\[\Delta_N f = -\nabla_N^+\nabla_N^- f = -\nabla_N^-\nabla_N^+ f.\]
For all $1\le m\le (N-1)/2$, we define the functions
\begin{align*}
\varphi_{0,N}(x) =& 1\\
\varphi_{m,N}(x) =& \sqrt{2}\cos(2\pi m j/N)\\
\psi_{m,N}(x) =& \sqrt{2}\sin(2\pi m j/N)\\
\end{align*}
for $x\in I_j$ and $0\leq j\le N-1$. We also set $\varphi_{0,N}=0$. The system $\left\{\varphi_{m,N}, \psi_{m,N}\right\}$ is an orthonormal basis of $H_N$ for the norm $H_N^0$. Moreover, these functions are eigenfunctions of $\Delta_N$. Indeed, we have $\Delta_N e_{m,N} = -\lambda_{m,N} e_{m,N}$, for all $e_{m,N}\in \left\{\varphi_{m,N}, \psi_{m,N}\right\}$ and
\[\lambda_{m,N}= 2N^2\left(1-\cos(2\pi m/N)\right).\]

Then, for all $\alpha\in\R$, we define the norm $H_N^\alpha$ as 
\[\|f\|_{H_N^\alpha}^2 =\sum_{m=0}^{(N-1)/2} (1+\lambda_{m,N})^\alpha\left(\langle f, \varphi_{m,N}\rangle^2  + \langle f, \psi_{m,N}\rangle^2\right) .\]
Let us remark that this definition is consistent for $\alpha=0$.  Let us notice that in the discrete case, the duality bracket coincides with the scalar product. The following lemma from \cite{Blount91} summarises some useful properties for our study.

\begin{lemme}[{\cite[Lemma 2.9]{Blount91}}]
\begin{enumerate}[(a)]
\item For all $f,g\in H_N$, $\langle \nabla_N^+f, g\rangle = \langle f,\nabla_N^- g\rangle$.
\item For $e_{m,N}\in \left\{\varphi_{m,N}, \psi_{m,N}\right\}$, $\|\nabla_N^\pm e_{m,N}\|_\infty\leq 2\sqrt{2}\pi m$.
\item There exists $c>0$ such that for all $1\leq m\leq (N-1)/2$
\[c^{-1}\lambda_m \leq \lambda_{m,N}\leq c\lambda_m.\]
\item For all $0\leq \alpha$, there exists $c>0$ such that for all $f\in H_N$
\[c^{-1}\|f\|_{H^{-\alpha}}\leq \|f\|_{H_N^{-\alpha}}\leq c\|f\|_{H^{-\alpha}}.\]
\end{enumerate}
\end{lemme}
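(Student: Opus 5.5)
Parts (a) and (b) are direct computations, (c) is an elementary inequality on $1-\cos$, and (d) follows from (c) combined with a comparison of the discrete and continuous Fourier coefficients of step functions; I expect (d) to be the only substantial step.

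\textbf{(a).} This is discrete summation by parts on the torus. Write
\[
\langle \nabla_N^+f,g\rangle=\frac1N\sum_j N\,(f_{j+1}-f_j)\,g_j .
\]
Shift the index in the term containing $f_{j+1}$; periodicity (the convention $I_N=I_0$) ensures no boundary terms appear. The result is $\sum_j f_j(g_{j-1}-g_j)=\langle f,\nabla_N^-g\rangle$.

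\textbf{(b).} Write $e_{m,N}$ through $e^{\pm 2i\pi mj/N}$. Then $\nabla_N^\pm$ multiplies each exponential by $N(e^{\pm 2i\pi m/N}-1)$, whose modulus is $2N|\sin(\pi m/N)|\le 2\pi m$. Taking real and imaginary parts and using the factor $\sqrt2$ gives $\|\nabla^\pm_N e_{m,N}\|_\infty\le 2\sqrt2\pi m$.

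\textbf{(c).} We have $\lambda_{m,N}=4N^2\sin^2(\pi m/N)$ and $m/N\le 1/2$. For $x\in[0,\pi/2]$ the concavity bound $\tfrac{2}{\pi}x\le\sin x\le x$ gives $\tfrac{4}{\pi^2}\lambda_m\le\lambda_{m,N}\le\lambda_m$. Hence $c=\pi^2/4$ works.

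\textbf{(d).} For $\alpha=0$ there is nothing to prove, so take $\alpha>0$.

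\emph{Upper bound on $\|f\|_{H^{-\alpha}}$.} Use duality: $\|f\|_{H^{-\alpha}}=\sup\{\langle f,g\rangle:\|g\|_{H^\alpha}\le1\}$. Since $f\in H_N$, we have $\langle f,g\rangle=\langle f,P_Ng\rangle$, so it suffices to show $\|P_Ng\|_{H^\alpha_N}\le c\|g\|_{H^\alpha}$ and then apply the discrete Cauchy–Schwarz inequality.

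\emph{Lower bound on $\|f\|_{H^{-\alpha}}$.} Conversely, $\|f\|_{H^{-\alpha}_N}=\sup\{\langle f,g\rangle:g\in H_N,\ \|g\|_{H^\alpha_N}\le1\}$. Here I need to exhibit, for each $g\in H_N$, some $\tilde g\in H^\alpha$ with $P_N\tilde g=g$ and $\|\tilde g\|_{H^\alpha}\le c\|g\|_{H^\alpha_N}$. The natural candidate is the trigonometric interpolant $\tilde g=\sum_m c_m(g)\,\mathcal{T}_N^{-1}e_m$ built from low modes $|m|\le (N-1)/2$. One computes $P_N e_m=\mathrm{sinc}(\pi m/N)\,e^{i\theta_m}\,e_{m,N}$ for $|m|\le(N-1)/2$ (exponential version, with a phase $\theta_m$ from the half-cell shift), and $|\mathrm{sinc}(\pi m/N)|\ge 2/\pi$ in this range. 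So dividing each coefficient by this factor yields $\tilde g$ with $P_N\tilde g=g$, and (c) gives the norm bound.

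\emph{The obstacle: $P_N$ on high modes.} The bound $\|P_Ng\|_{H^\alpha_N}\lesssim\|g\|_{H^\alpha}$ for general $g$ is where I expect the work. Modes $e_m$ with $|m|>N/2$ alias onto $e_{m',N}$ with $m'\equiv m \pmod N$, and carry the factor $\mathrm{sinc}(\pi m/N)\sim N/(\pi m)$. The coefficient of $e_{m',N}$ is therefore bounded by
\[
\sum_k |\hat g(m'+kN)|\,\frac{N}{|m'+kN|+1}.
\]
I will apply Cauchy–Schwarz in $k$ with weights $(1+|m'+kN|)^{\alpha}$, and check that
\[
\sum_k (1+|m'+kN|)^{-\alpha}\,\frac{N^2}{(1+|m'+kN|)^2}\,(1+m'^2)^{\alpha}
\]
is bounded uniformly in $m'$ and $N$. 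This holds since $|m'+kN|\gtrsim |k|N\ge |k|\,m'$ for $k\ne0$ and the remaining sum $\sum_k k^{-2}$ converges. Summing over $m'$ then gives the claim. A similar aliasing computation would handle any $\alpha>0$; if the weights misbehave for small $\alpha$, I would first treat integer $\alpha$ and interpolate.
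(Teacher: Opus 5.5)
The paper does not prove this lemma; it quotes it from \cite[Lemma 2.9]{Blount91}. Your argument therefore has to stand on its own, and in substance it does. Parts (a)--(c) are correct as written. With the paper's convention $\nabla_N^-g_j=N(g_{j-1}-g_j)$, the summation by parts indeed carries no minus sign, and $c=\pi^2/4$ works in (c).

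Your duality scheme for (d) is sound:
\begin{itemize}
\item $\|f\|_{H^{-\alpha}}\le c\|f\|_{H^{-\alpha}_N}$ reduces to $\|P_Ng\|_{H^\alpha_N}\le c\|g\|_{H^\alpha}$;
\item the reverse inequality reduces to a uniformly bounded right inverse of $P_N$. The polynomial with coefficients $\widehat g(m)e^{-i\pi m/N}/\mathrm{sinc}(\pi m/N)$, $|m|\le (N-1)/2$, provides one. It is not the interpolant $I_N g$ of the appendix, which matches point values rather than cell averages, but it is the object you actually describe.
\end{itemize}

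The aliasing estimate, however, needs two corrections.

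\emph{The $k=0$ term.} The majorant $N/(|m'+kN|+1)$ is useless there. With it, the $k=0$ term of your displayed sum is of order $N^2/(1+|m'|)^2$, i.e.\ of order $N^2$ at $m'=0$, so the uniform bound you claim is false as written. The true multiplier is $|\mathrm{sinc}(\pi(m'+kN)/N)|\le\min\{1,N/(\pi|m'+kN|)\}$, and you must use the bound $1$ at $k=0$.

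\emph{The exponent.} The Cauchy--Schwarz weight enters squared, so the factor should be $(1+|m'+kN|^2)^{-\alpha}$, not $(1+|m'+kN|)^{-\alpha}$. With the latter, the $k=1$ term at $|m'|\approx N/2$ grows like $N^{\alpha}$.

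Once both are corrected, $|m'+kN|\ge |k|N/2\ge|m'|$ for $k\neq0$ bounds each $k\neq 0$ term by $4/(\pi^2k^2)$. This holds uniformly in $m'$, $N$ and $\alpha\ge0$, so no interpolation fallback is needed.

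A more direct route to (d) avoids duality altogether. For $f\in H_N$ and every $m\in\mathbb{Z}$,
\[
\langle f,e^{2i\pi m\cdot}\rangle=e^{-i\pi m/N}\,\mathrm{sinc}(\pi m/N)\,\widehat f(m'),\qquad m'\equiv m\pmod N,
\]
where $\widehat f$ are the discrete Fourier coefficients of the appendix. Hence
\[
\|f\|_{H^{-\alpha}}^2\asymp\sum_{|m'|\le (N-1)/2}|\widehat f(m')|^2\sum_{k\in\mathbb{Z}}\bigl(1+(m'+kN)^2\bigr)^{-\alpha}\,\mathrm{sinc}^2\bigl(\pi(m'+kN)/N\bigr).
\]
By the same aliasing bound, the inner sum lies between $\frac{4}{\pi^2}(1+m'^2)^{-\alpha}$ (its $k=0$ term) and $C(1+m'^2)^{-\alpha}$. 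Combined with (c), this gives both inequalities at once.

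Your argument is the adjoint of this computation. It costs an extra step, but it yields two by-products: the bound $\|P_Ng\|_{H^\alpha_N}\le C\|g\|_{H^\alpha}$ for every $\alpha\ge0$ (the paper's \eqref{truc1} is only stated for exponents in $[0,1]$), and an explicit uniformly bounded right inverse of $P_N$.
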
 
Let us emphasise that the constants, appearing in this lemma, are independent of $N$. The projections $P_N$ and $id -P_N$ are continuous for the Sobolev norm, and for all $\gamma\in [0,1]$, we have
\begin{equation}\label{truc1}
\|P_Nv\|_{H^\gamma_N} \le C\|v\|_{H^\gamma}
\end{equation}
and 
\begin{equation}\label{truc2}
\|(I-P_N)v\|_ {L^2}\le C/N \|v\|_{H^1}
\end{equation}

The functions $\ind_{I_j}$ are of high interest for two reasons. Firstly, every jumps of $\bu^N$ and $\bv^N$ are a combination of them. Secondly, the renormalised $(N\ind_{I_j})_N$ approximate the Dirac measure. That is why we need to control their Sobolev norm. 
\begin{lemme}
For all $-1/2<\gamma$, their exists a constants $C_\gamma>0$ such that
\[C_\gamma^{-1} N^{\gamma-1/2}\leq \left\|\ind_{I_j}\right\|_{H_N^\gamma} \leq C_\gamma N^{\gamma-1/2}.\]
\end{lemme}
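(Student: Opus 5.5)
We compute the discrete Fourier coefficients of $\ind_{I_j}$ explicitly and then estimate the resulting lattice sum. By translation invariance of $\Delta_N$ (the shift $x\mapsto x+1/N$ commutes with $\Delta_N$ and preserves $\|\cdot\|_{H_N^\gamma}$, since it acts on each eigenspace $\mathrm{span}\{\varphi_{m,N},\psi_{m,N}\}$ by a rotation), it suffices to treat $j=0$. Since $\ind_{I_0}$ is supported on $I_0$, where $\varphi_{m,N}=\sqrt2$, $\psi_{m,N}=0$ for $m\ge1$ and $\varphi_{0,N}=1$, we get
\[
\langle \ind_{I_0},\varphi_{0,N}\rangle = \tfrac1N,\qquad
\langle \ind_{I_0},\varphi_{m,N}\rangle=\tfrac{\sqrt2}{N},\qquad
\langle \ind_{I_0},\psi_{m,N}\rangle=0 .
\]
Hence
\[
\|\ind_{I_0}\|_{H_N^\gamma}^2=\frac1{N^2}\Big(1+2\sum_{m=1}^{(N-1)/2}(1+\lambda_{m,N})^\gamma\Big).
\]

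Next we replace $\lambda_{m,N}$ by $\lambda_m=(2\pi m)^2$ using part (c) of the previous lemma, $c^{-1}\lambda_m\le\lambda_{m,N}\le c\lambda_m$. This also gives $c'^{-1}(1+\lambda_m)\le 1+\lambda_{m,N}\le c'(1+\lambda_m)$, and raising to the power $\gamma$ (of either sign) only changes the constants. So, up to constants depending on $\gamma$,
\[
\|\ind_{I_0}\|_{H_N^\gamma}^2\asymp \frac1{N^2}\Big(1+\sum_{m=1}^{(N-1)/2} m^{2\gamma}\Big).
\]

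The main (though elementary) step is the two-sided estimate of $S_N=\sum_{m=1}^{(N-1)/2} m^{2\gamma}$. Because $2\gamma>-1$, comparison with $\int_0^{N/2} x^{2\gamma}\,dx$ gives $S_N\asymp N^{2\gamma+1}$. For the upper bound, use monotonicity of $x^{2\gamma}$ on each interval $[m-1,m]$ or $[m,m+1]$ according to the sign of $\gamma$; integrability at $0$ is exactly where $\gamma>-1/2$ is needed. For the lower bound, keep only the terms with $N/4\le m\le (N-1)/2$: there are about $N/4$ of them, each comparable to $N^{2\gamma}$. The constant term $1$ is also $\lesssim N^{2\gamma+1}$ since $2\gamma+1>0$ and $N\ge1$. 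Therefore $\|\ind_{I_j}\|_{H_N^\gamma}^2\asymp N^{2\gamma-1}$, and taking square roots gives the claim. The only delicate case is small $N$: there the lower bound from the tail can degenerate (e.g. $N=1$), but the constant term alone gives $N^{-2}\gtrsim N^{2\gamma-1}$ for bounded $N$, so it suffices to adjust $C_\gamma$.
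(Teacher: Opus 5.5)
Your proof is correct and follows essentially the paper's route. You compute the coefficients of $\ind_{I_j}$ explicitly; the paper handles general $j$ directly via $\cos^2+\sin^2=1$ rather than by translation invariance. This gives $\|\ind_{I_j}\|_{H_N^\gamma}^2=N^{-2}\bigl(1+2\sum_{m=1}^{(N-1)/2}(1+\lambda_{m,N})^\gamma\bigr)$, and you then show the sum is of order $N^{2\gamma+1}$ using $\gamma>-1/2$. The only difference is that for this last step you use the eigenvalue comparison (c) and elementary power-sum bounds, handling small $N$ explicitly. The paper instead rewrites $1+\lambda_{m,N}$ in terms of $\sin^2(\pi m/N)$ and compares with $\int_0^{1/2}\sin^{2\gamma}(x)\,dx$ via a Riemann sum.
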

\begin{proof}
For all $N\geq 1$, $0\leq j\leq N-1$, and $ 0\leq m\leq (N-1)/2$, we have
\[\langle \ind_{I_j},\varphi_{m,N}\rangle =\frac{\sqrt{2}}{N}\cos(2\pi{j} m/N), \langle \ind_{I_j},\psi_{m,N}\rangle =\frac{\sqrt{2}}{N}\sin(2\pi {j}m/N),\]
and $\langle \ind_{I_j},\varphi_{0,N}\rangle =1/N$.
It follows that 
\begin{align*}
\left\|\ind_{I_j}\right\|_{H_N^\gamma}^2
=& \sum_{m=0}^{(N-1)/2} (1+\lambda_{m,N})^\gamma\left(\langle \ind_{I_j},\varphi_{m,N}\rangle^2 + \langle \ind_{I_j},\psi_{m,N}\rangle^2\right)\\
&=\frac{1}{N^2} +\frac{2}{N^2}\sum_{m=1}^{(N-1)/2} (1+\lambda_{m,N})^\gamma\\
=& \frac{1}{N^2} +2N^{2\gamma-2}\sum_{m=1}^{(N-1)/2}(1/N^2 + \sin^2(\pi m/N))^\gamma\\
\end{align*}
Then, for all $N\geq 1$ and $1\leq m\leq (N-1)/2$, we have 
\[\frac{\pi^2}{\pi^2+1}\leq \frac{\sin^2(\pi m/N)}{1/N^2+\sin^2(\pi m/N)}\leq \frac{1}{2}.\]
For $\gamma>-1/2$, we have the following integral comparison
\[\frac{1}{N}\sum_{m=1}^{(N-1)/2}\sin^{2\gamma}(\pi m/N)\sim \int_0^{1/2}\sin^{2\gamma}(x)\, dx.\]
This ends the proof.
\end{proof}
In particular, $\sqrt{N}\ind_{I_j}$ are an orthonormal basis of $H_N$. Now, we give the discrete equivalent of two important results for classical Sobolev spaces : the regularisation of the heat semigroup and the product rules. Let us denote $T_N$ the semigroup generated by $\Delta_N$. It is a contraction for all norm $H_N^\alpha$ and for the norm $L^\infty$.  Moreover, it regularises functions.

\begin{lemme}\label{prop:regular}
Let $\alpha\leq \beta$, then for all $0<T$, there exists $c>0$, such that for all $1\leq N$, and $\varphi\in H_N$
\[\|T_N(t)\varphi\|_{H_N^\beta}\leq c t^{-(\beta-\alpha)/2} \|\varphi\|_{H_N^\alpha},\quad t\in [0,T].\]
\end{lemme}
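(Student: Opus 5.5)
The plan is to diagonalise $T_N(t)$ in the orthonormal eigenbasis $\{\varphi_{m,N},\psi_{m,N}\}$ of $\Delta_N$ and reduce the estimate to a scalar inequality that is uniform in the eigenvalue. Since $\Delta_N e_{m,N} = -\lambda_{m,N}e_{m,N}$, we have $T_N(t)e_{m,N} = e^{-\lambda_{m,N}t}e_{m,N}$. Therefore, writing $f_m$ for the coefficients $\langle \varphi,\varphi_{m,N}\rangle$, $\langle\varphi,\psi_{m,N}\rangle$, the definition of the $H_N^\beta$ norm gives
\[
\|T_N(t)\varphi\|_{H_N^\beta}^2 = \sum_{m=0}^{(N-1)/2} (1+\lambda_{m,N})^{\beta}e^{-2\lambda_{m,N}t}\left(\langle \varphi,\varphi_{m,N}\rangle^2+\langle \varphi,\psi_{m,N}\rangle^2\right).
\]
It is then enough to bound, uniformly in $m$ and $N$,
\[
(1+\lambda_{m,N})^{\beta-\alpha}e^{-2\lambda_{m,N}t}\le c^2 t^{-(\beta-\alpha)}, \qquad t\in(0,T].
\]
Once this holds, factoring it out of the sum leaves exactly $\|\varphi\|_{H_N^\alpha}^2$.

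For the scalar bound, set $\delta=\beta-\alpha\ge0$ and $x=\lambda_{m,N}\ge0$. If $\delta=0$ the claim is just the contraction property, since $e^{-2xt}\le1$. For $\delta>0$, use $(1+x)^\delta\le 2^{\delta}\max(1,x^\delta)$. The term $x^\delta e^{-2xt}$ equals $t^{-\delta}(xt)^\delta e^{-2xt}\le t^{-\delta}\sup_{y\ge0}y^\delta e^{-2y}=C_\delta t^{-\delta}$. The term $1\cdot e^{-2xt}\le 1\le T^\delta t^{-\delta}$ for $t\le T$. Hence $c^2=2^\delta(C_\delta+T^\delta)$, which depends only on $\delta$ and $T$ and not on $N$, $m$ or $\varphi$.

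Nothing here is genuinely hard. The only point needing care is that the constant is independent of $N$, and this is automatic because the scalar supremum is taken over all $x\ge0$. In particular, no comparison between $\lambda_{m,N}$ and $\lambda_m$ (part (c) of the lemma of \cite{Blount91}) is needed. The restriction $t\le T$ is used only to absorb the low-frequency mode $m=0$ (where $\lambda_{0,N}=0$), which does not decay.
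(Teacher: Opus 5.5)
Your proof is correct and follows essentially the same route as the paper. Both diagonalise $T_N(t)$ in the eigenbasis of $\Delta_N$, bound the multiplier using $\sup_{y\ge0}y^{\delta}e^{-2y}<\infty$, and use $t\le T$ to absorb the non-decaying part; your split $(1+x)^\delta\le 2^\delta\max(1,x^\delta)$ is slightly cleaner than the paper's passage to $\lambda_{m,N}^\beta$ for $m\ge1$ with the zero mode treated separately, since it needs no uniform lower bound on $\lambda_{1,N}$ and no care about the signs of $\alpha,\beta$.
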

\begin{proof}
We have
\begin{align*}
\|T_N(t)\varphi\|_{H_N^\beta}^2
&\leq \langle T_N(t)\varphi ,\varphi_{0,N}\rangle^2+ c\sum_{m=1}^{(N-1)/2} \lambda_{m,N}^\beta\left(\langle T_N(t)\varphi ,\varphi_{m,N}\rangle^2+\langle T_N(t)\varphi ,\psi_{m,N}\rangle^2\right)\\
&= \langle\varphi ,\varphi_{0,N}\rangle^2+ c\sum_{m=1}^{(N-1)/2} \lambda_{m,N}^\alpha\lambda_{m,N}^{\beta-\alpha}e^{-2\lambda_{m,N}t}\left(\langle \varphi ,\varphi_{m,N}\rangle^2+\langle \varphi ,\psi_{m,N}\rangle^2\right)\\
&=  c\left(1+t^{-(\beta-\alpha)}\right) \sum_{m=0}^{(N-1)/2} \lambda_{m,N}^\alpha(\lambda_{m,N}t)^{\beta-\alpha}e^{-2\lambda_{m,N}t}\left(\langle \varphi ,\varphi_{m,N}\rangle^2+\langle \varphi ,\psi_{m,N}\rangle^2\right)\\
\end{align*}
Then, for all $0\leq\gamma$ the function $x\in\R_+\mapsto x^\gamma e^{-2x}$ is upper bounded by $c=(\gamma/2e)^\gamma$. Hence, we have
\[\|T_N(t)\varphi\|_{H_N^\beta}^2 \leq c\left(1+t^{-(\beta-\alpha)}\right)\|\varphi\|_{H_N^\alpha}^2.\]
\end{proof}

We conclude with the product rules in $H_N$. We give simple proofs of these results in Appendix \ref{sec:prod} for the sake of completeness but they could be deduced from product rules in discrete Besov spaces (see instance \cite{MP19}, \cite{GH21}). There are two kind of rules : the direct one, for non-negative Sobolev spaces, and their dual, for negative indices.

\begin{theorem}\label{prop:prod+}
Let $0\leq \alpha, \beta, \gamma\leq 1$, satisfying one of the following assumptions
\begin{enumerate}
\item[(i)] If $1/2<\alpha\vee\beta\leq 1$, and $\gamma< \alpha\wedge\beta$, 
\item[(ii)] If $\alpha\vee\beta\leq 1/2$, and $\gamma < \alpha+\beta-1/2$, 
\end{enumerate}
Then, there exists $C>0$ such that for all $u,v\in H_N$,
$\|uv\|_{H_N^\gamma} \leq C  \|u\|_{H_N^\alpha}\|v\|_{H_N^\beta}$.
\end{theorem}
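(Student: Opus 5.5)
The plan is to reduce the discrete product estimate to the continuous one via a physical-space description of the $H_N^\gamma$ norms. For $0<\gamma<1$ I would use the discrete Sobolev–Slobodeckij (Gagliardo) characterisation
\[
\|f\|_{H_N^\gamma}^2 \simeq \|f\|_{H_N^0}^2 + \frac1{N^2}\sum_{j\neq k} \frac{|f_j-f_k|^2}{|(j-k)/N|^{1+2\gamma}},
\]
with distances taken on the discrete torus, constants independent of $N$. For $\gamma=1$ I would instead use $\|f\|_{H_N^1}^2\simeq \|f\|_{H_N^0}^2+\|\nabla_N^+f\|_{H_N^0}^2$.

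The $\gamma=1$ equivalence follows directly from Lemma (a),(c) and the spectral definition. For $0<\gamma<1$ I would diagonalise the translation-difference quadratic form in the basis $\{\varphi_{m,N},\psi_{m,N}\}$. The double sum equals
\[
\sum_m w_N(m)\,\bigl(\langle f,\varphi_{m,N}\rangle^2+\langle f,\psi_{m,N}\rangle^2\bigr),
\qquad
w_N(m)=\frac1N\sum_{h\neq0}\frac{4\sin^2(\pi m h/N)}{|h/N|^{1+2\gamma}}.
\]
One then checks $w_N(m)\simeq m^{2\gamma}\simeq\lambda_{m,N}^\gamma$ uniformly for $1\le m\le (N-1)/2$ by comparing the sum to the integral $\int \sin^2(\pi m y)|y|^{-1-2\gamma}dy$, exactly as in the proof of the lemma on $\|\ind_{I_j}\|_{H_N^\gamma}$.

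With this characterisation, case (i) is handled by the elementary splitting
\[
u_jv_j-u_kv_k=(u_j-u_k)v_j+u_k(v_j-v_k).
\]
Assume $\alpha>1/2$ (the case $\beta>1/2$ is symmetric). I need a discrete Sobolev embedding $\|u\|_\infty\le C\|u\|_{H_N^\alpha}$ for $\alpha>1/2$. This follows from writing $u_j$ in the orthonormal basis, applying Cauchy–Schwarz, and using $\sum_m(1+\lambda_{m,N})^{-\alpha}<\infty$ uniformly by Lemma (c). The term $u_k(v_j-v_k)$ is then bounded by $\|u\|_\infty|v|_{\gamma}\lesssim\|u\|_{H_N^\alpha}\|v\|_{H_N^\beta}$, since $\gamma<\beta$. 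For the term $(u_j-u_k)v_j$, if $\beta>1/2$ as well the same argument applies. Otherwise I would use Hölder in the discrete sum: bound $v$ in $\ell^p$ and the difference quotient of $u$ in $\ell^q$ with $1/p+1/q=1/2$. This needs discrete embeddings $H_N^\beta\subset L^p$ for $p<2/(1-2\beta)$, and a fractional-difference version for $u$ with $\alpha$ slightly larger than $\gamma$. The slack $\gamma<\alpha\wedge\beta$ is what allows these non-sharp exponents.

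In case (ii) I would use Hölder with $v\in L^p$ and $u\in L^q$ together with the corresponding fractional difference bounds. This reduces to the embeddings
\[
H_N^{s}\subset W^{\sigma,r}_N \quad\text{whenever } s-1/2>\sigma-1/r .
\]
The margin $\gamma<\alpha+\beta-1/2$ allows an $\varepsilon$ loss at each step.

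The main obstacle is proving these discrete fractional embeddings uniformly in $N$. My first attempt would avoid them by transferring to the continuum. I would embed $H_N$ into trigonometric polynomials of degree $\le (N-1)/2$ via the interpolant $\tilde f=\sum_m\langle f,e_{m,N}\rangle e_m$. By Lemma (c), $\|\tilde f\|_{H^s}\simeq \|f\|_{H_N^s}$. The continuous product rule then bounds $\|\tilde u\tilde v\|_{H^\gamma}$. It remains to compare $\widetilde{uv}$ with $\tilde u\tilde v$: the latter has frequencies up to $N-1$ that alias, and aliasing on the discrete grid folds frequency $m$ onto $m-N$. Aliasing reduces $|m|$ and hence does not increase the $H^\gamma$ weight for $\gamma\ge0$. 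Sampling then gives $\|uv\|_{H_N^\gamma}\lesssim\|\tilde u\tilde v\|_{H^\gamma}$. This aliasing step is where I expect the real work to be.
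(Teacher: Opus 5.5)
Your proposal is correct. The argument you single out at the end (transfer to the continuum) is a complete proof once the aliasing step is written out, and it is organised differently from the paper's.

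\textbf{The paper's route.} The paper transfers only the algebra estimate $\|uv\|_{H_N^\beta}\le C\|u\|_{H_N^\beta}\|v\|_{H_N^\beta}$, $1/2<\beta\le 1$. It compares $uv$ with $I_N(u)I_N(v)$ via a sampling lemma from \cite{BK} and a classical tame estimate. It then adds the elementary bounds $H_N^0\times H_N^\beta\to H_N^0$ and $H_N^0\times H_N^0\to H_N^{-\beta}$, which come from $\|u\|_\infty\le C\|u\|_{H_N^\beta}$. Cases (i) and (ii) follow by bilinear real interpolation between these endpoints.

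\textbf{Your route.} You transfer the whole inequality at once.
\begin{enumerate}
\item Your $\tilde f$ is exactly the trigonometric interpolant $I_N f$. The equivalence $\|\tilde f\|_{H^s}\simeq\|f\|_{H_N^s}$ for every real $s$ follows immediately from Lemma (c), since both norms use the same coefficients.
\item The step you expect to be ``the real work'' takes two lines. The product $\tilde u\tilde v$ has Fourier coefficients $c_m$ supported in $|m|\le N-1$ and takes the values $u_jv_j$ at $j/N$. So for $|k|\le (N-1)/2$ one has $\widehat{uv}(k)=c_k+c_{k-N}$ if $k>0$, $c_k+c_{k+N}$ if $k<0$, and $c_0$ if $k=0$. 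The folded index has modulus at least $(N+1)/2>|k|$, hence $\|uv\|_{H_N^\gamma}^2\le C\|\tilde u\tilde v\|_{H^\gamma}^2$ for every $\gamma\ge 0$. This is precisely the content of the sampling lemma the paper cites, but for band-limited products it needs no restriction to regularity above $1/2$.
\item The classical periodic product rule $\|fg\|_{H^\gamma}\le C\|f\|_{H^\alpha}\|g\|_{H^\beta}$, for $0\le\gamma\le\alpha\wedge\beta$ and $\alpha+\beta-\gamma>1/2$, covers both cases. In (ii), note that $\gamma<\alpha+\beta-1/2\le\alpha\wedge\beta$ because $\alpha\vee\beta\le 1/2$. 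You conclude with Lemma (c) again.
\end{enumerate}

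\textbf{Comparison.} Your route avoids interpolation theory and external sampling results altogether. The price is that you invoke the full classical product rule, including the low-regularity regime (ii). The paper needs only the algebra/tame estimate above $1/2$ and generates the remaining cases abstractly.

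The Gagliardo/H\"older route you sketch first is unnecessary. As it stands it is not a proof, because the uniform-in-$N$ discrete fractional embeddings it relies on are left open. Write up the transfer argument instead.
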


\begin{coro}\label{prop:prod-}
Let $0\leq \alpha, \beta,\gamma\leq 1$, satisfying one of the following assumptions
\begin{enumerate}[(i)]
\item $\beta\vee\gamma> 1/2$, and $\alpha< \beta\wedge\gamma$,
\item$ 0\leq \beta\vee\gamma\leq 1/2$, and $-\gamma < \beta-\alpha-1/2$, 
\end{enumerate}
Then, there exists $C>0$ such that for all $u,v\in H_N$,
$\|uv\|_{H_N^{-\gamma}} \leq C  \|u\|_{H_N^{-\alpha}}\|v\|_{H_N^\beta}$.
\end{coro}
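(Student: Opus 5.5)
Since the result is stated as a corollary of Theorem \ref{prop:prod+}, I would prove it by duality. In $H_N$ the duality bracket is the $L^2$ scalar product, and the $H_N^{-\gamma}$ norm is the dual of the $H_N^{\gamma}$ norm. This follows from the Parseval-type expression in the basis $\{\varphi_{m,N},\psi_{m,N}\}$, weighting the coefficients by $(1+\lambda_{m,N})^{\pm\gamma/2}$. Hence
\[
\|uv\|_{H_N^{-\gamma}}=\sup_{w\in H_N,\ \|w\|_{H_N^\gamma}\le 1}\langle uv,w\rangle .
\]
Because all functions in $H_N$ are step functions, multiplication is pointwise and $\langle uv,w\rangle=\langle u, vw\rangle$. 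The same duality then gives
\[
|\langle u,vw\rangle|\le \|u\|_{H_N^{-\alpha}}\,\|vw\|_{H_N^{\alpha}} .
\]
It therefore suffices to prove $\|vw\|_{H_N^\alpha}\le C\|v\|_{H_N^\beta}\|w\|_{H_N^\gamma}$ with $C$ independent of $N$. Taking the supremum over $w$ then yields the claim.

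This last inequality is exactly Theorem \ref{prop:prod+}, applied with the index triple $(\beta,\gamma;\alpha)$ in place of $(\alpha,\beta;\gamma)$: the two factors carry regularities $\beta$ and $\gamma$, and the target regularity is $\alpha$. It remains to check that the hypotheses match.

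\textbf{Case (i).} Here $\beta\vee\gamma>1/2$ and $\alpha<\beta\wedge\gamma$. This is hypothesis (i) of Theorem \ref{prop:prod+} for the triple $(\beta,\gamma;\alpha)$.

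\textbf{Case (ii).} Here $\beta\vee\gamma\le 1/2$, and the condition $-\gamma<\beta-\alpha-1/2$ rewrites as $\alpha<\beta+\gamma-1/2$. This is hypothesis (ii) of the theorem for the same triple.

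In both cases the range constraints $0\le\alpha,\beta,\gamma\le1$ are inherited directly from the statement.

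The only point requiring care is the duality identity for the discrete norms, and in particular that the constant does not depend on $N$. The identity is exact, with constant $1$, by Cauchy--Schwarz applied coefficientwise in the orthonormal eigenbasis. The supremum is attained at $w$ with coefficients $(1+\lambda_{m,N})^{-\gamma}\langle uv,e_{m,N}\rangle$, normalised. The zero mode $\varphi_{0,N}=1$ also needs to be included in the sum, so the notational slip setting it to $0$ should be ignored. With these points in place, the constant $C$ is that of Theorem \ref{prop:prod+} and is uniform in $N$. I do not expect a genuine obstacle: the substance lies entirely in Theorem \ref{prop:prod+}, proved in Appendix \ref{sec:prod}.
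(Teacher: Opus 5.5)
Your proposal is correct and matches the paper's proof: bound $\langle uv,\varphi\rangle\le\|u\|_{H_N^{-\alpha}}\|v\varphi\|_{H_N^{\alpha}}$ by duality, then apply Theorem \ref{prop:prod+} to $v\varphi$ with the index triple $(\beta,\gamma;\alpha)$, where hypotheses (i) and (ii) correspond exactly as you verify. Your added checks are welcome clarifications the paper leaves implicit: you verify case (ii) explicitly, which the paper only calls similar; you note that the duality identity holds with constant $1$, uniformly in $N$; and you include the zero mode.
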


Once again, we point out that the constants do not depend on $N$. In fact, for the classical Sobolev spaces, sharper product rules hold but we do not need these.

\subsection{The main result}

Let us fix a time horizon $T>0$.  Let $0<\alpha<\beta<1/2$. The goal of this article is to prove the convergence of $(u^N, v^N)$ to this unique solution of Equation \eqref{eq:uv}, in $\CC\left([0,T], (L^\infty\cap H^\beta)\times H^{-\alpha}\right)$, in the sense that 
\[\sup_{0\leq t\leq T}\|u^N(t)-u(t)\|_\infty,\quad  \sup_{0\leq t\leq T}\|u^N(t)-P_N(u)(t)\|_{H_N^\beta},\quad\text{and}\quad \sup_{0\leq t\leq T}\|v^N(t)-v(t)\|_{H_N^{-\alpha}}\]
converge to $0$. Firstly, let us make some assumptions on the initial distributions.

\begin{ass}\label{ass:u0v0}
\begin{enumerate}[(i)]
\item $(u^N(0), v^N(0))$ are uniformly bounded in $L^\infty$ : for all $N\geq 1$
\[\|(u^N(0), v^N(0)\|_\infty<M.\]
\item There exists $\tilde{\beta}>\beta$ such that $u(0)\in \CC^0\cap H^{\tilde{\beta}}$ and $u^N(0)$ converges to $u(0)$ in $L^\infty\cap H_N^{\tilde{\beta}}$.
\item $v^N(0)$ converges to $v(0)$ in $H^{-\alpha}$.
\end{enumerate}
\end{ass}

Under this assumption on $(u(0), v(0)$, we show that Equation \eqref{eq:uv} admits a unique solution.
\begin{lemme}\label{prop:unic}
For all $(u_0,v_0)\in (L^\infty\cap H^\beta) \times H^{-\alpha}$, there exists only one solution to the system \eqref{eq:uv}, with initial condition $u(0) = u_0$ and $v(0) = v_0$, in $\CC\left([0,T], (L^\infty\cap H^\beta)\times H^{-\alpha}\right)$. Moreover, it satisfies 
\[\sup_{0\leq t\leq T}\|u(t)\|_\infty\leq M.\]
\end{lemme}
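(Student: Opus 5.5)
We treat the $v$-equation as a linear ODE once $u$ is given, and reduce the system to a fixed point for $u$ alone. Since $R_D(u,v)=d_Dv+b_D(u)$ under Assumption \ref{ass:rate} (reactions in $\RR_D$ have rates $d_rv+b_r(u)$ or $d_rv$), the second equation is affine in $v$ with constant coefficient $d_D$:
\[ v(t)=e^{d_Dt}v_0+\int_0^te^{d_D(t-s)}b_D(u(s))\,ds . \]
This makes sense in $H^{-\alpha}$ and depends continuously on $u\in \CC([0,T],L^\infty)$, since $b_D$ is $\CC^1$ and hence locally Lipschitz on bounded sets. Substituting into the first equation gives the mild formulation
\[ u(t)=e^{\Delta t}u_0+\int_0^te^{\Delta(t-s)}\big(a_Cu(s)v(s)+b_C(u(s))+d_Cv(s)\big)\,ds . \]
I would solve it by a Banach fixed point in $X_\tau=\CC([0,\tau],L^\infty\cap H^\beta)$, restricted to a ball.

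For the contraction estimate I need two bounds. First, the products $uv$ with $u\in L^\infty\cap H^\beta$ and $v\in H^{-\alpha}$ lie in $H^{-\alpha}$. This follows from the continuous analogue of Corollary \ref{prop:prod-} (ii): with $\beta\vee\alpha\le1/2$, we need $-\alpha<\beta-\alpha-1/2$, which is false. So I would instead use that multiplication by $u\in H^\beta\cap L^\infty$ maps $H^\alpha$ to $H^{\alpha'}$ for $\alpha'<\alpha$ small (the $L^\infty$ bound handles the low regularity), and conclude by duality that $uv\in H^{-\alpha''}$ for some $\alpha''$ slightly larger than $\alpha$ but $<1/2$. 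Second, the heat semigroup maps $H^{-\alpha''}$ into $H^\beta$ with a factor $(t-s)^{-(\beta+\alpha'')/2}$, which is integrable since $\beta+\alpha''<1$. It also maps into $L^\infty$: by Sobolev embedding, through $H^{1/2+\epsilon}$, with exponent $(1/2+\epsilon+\alpha'')/2<1$. Hence the Duhamel term is Lipschitz in $u$ with constant $C\tau^{\delta}$, and we get local existence and uniqueness. Uniqueness on the full interval follows by the same Gronwall-type argument applied to the difference of two solutions, which are both bounded in $L^\infty$.

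To extend to $[0,T]$, I need an a priori $L^\infty$ bound, and this is the main obstacle. Because $v$ is only a distribution, the maximum principle cannot be applied directly. I would argue by approximation: take smooth nonnegative $v_0^\epsilon\to v_0$ in $H^{-\alpha}$ (the statement implicitly needs $v_0\ge0$ as a measure, as for the limits of $v^N$). With smooth data, $v^\epsilon\ge0$ (using $b_D\ge0$), and Assumption \ref{ass:RC} together with the comparison principle gives $0\le u^\epsilon\le \max(M,\|u_0\|_\infty)$, which is $\le M$ under Assumption \ref{ass:u0v0}. Nonnegativity of $u^\epsilon$ uses $R_C(0,v)=b_C(0)+d_Cv\ge0$, which holds because $b_r(0)=0$ whenever $\gamma_r=-1$. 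The local well-posedness estimates give stability, so $u^\epsilon\to u$ and the bound passes to the limit. A uniform bound then lets me iterate the local step with a fixed $\tau$ and reach $T$. Since the $H^{-\alpha}$ norm of $v$ grows at most exponentially by the linear ODE, the fixed-point ball radius stays controlled on $[0,T]$.
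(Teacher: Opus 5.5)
Your proposal is correct, but it is organised differently from the paper's proof in Appendix~\ref{sec:existunic}.

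\textbf{Route.} The paper replaces $b_C,b_D$ by compactly supported truncations $b_C^M,b_D^M$ and runs a joint contraction for the pair $(u,v)$ on a short interval. It then iterates to $[0,T]$, and only at the end removes the truncation by invoking ``Assumption~\ref{ass:RC} and the maximum principle''. You instead eliminate $v$ through the variation-of-constants formula. This is legitimate, since $R_D(u,v)=d_Dv+b_D(u)$ has no $uv$ term, as the paper itself remarks after Theorem~\ref{prop:main}. You then solve a scalar fixed point for $u$. The price is that you need the a priori $L^\infty$ bound \emph{before} you can iterate rather than after. Uniqueness is handled the same way in both: a singular Gronwall argument on the difference of two solutions.

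\textbf{What your route buys.} It buys exactly that bound. You rightly note that the maximum principle cannot be applied as such when $v(t)$ is only an element of $H^{-\alpha}$. Your mollification of $v_0$, combined with stability of the local solution map, supplies the justification that the paper's one-line argument omits. The explicit formula also makes $v\ge 0$ transparent, since $v$ is a positive combination of $v_0$ and $b_D(u)\ge 0$.

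\textbf{Hidden hypotheses.} You correctly detect that the statement needs more than it says. For a signed $v_0$, the term $a_Cuv$ with $a_C<0$ is a positive source growing linearly in $u$, so $\sup_t\|u(t)\|_\infty\le M$ can fail. One needs $v_0\ge 0$, together with $u_0\ge 0$ and $\|u_0\|_\infty\le M$. You use $u_0\ge 0$ for the lower bound without stating it. All of these hold for the limits of $(u^N(0),v^N(0))$ under Assumption~\ref{ass:u0v0}, which is the only case the paper uses.

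\textbf{Corrections to your product estimate.} Neither of these breaks the argument.
\begin{enumerate}
\item Corollary~\ref{prop:prod-}(ii), or its continuous analogue, is usable. You only tried output index $\alpha$. Any output index $\gamma\in(\alpha+1/2-\beta,1/2]$ is allowed and gives $\|uv\|_{H^{-\gamma}}\le C\|u\|_{H^\beta}\|v\|_{H^{-\alpha}}$ with no $L^\infty$ input. This is what the paper uses in the proof of Proposition~\ref{prop_w}.
\item The bound $\|uv\|_{H^{-\alpha''}}\le C\|u\|_{L^\infty\cap H^\beta}\|v\|_{H^{-\alpha}}$ does not hold for $\alpha''$ merely slightly larger than $\alpha$. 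Test the dual estimate with $u=\eta(x/h)\cos(\mu x)$ and $\varphi=\tilde\eta(x/h)$, where $\tilde\eta=1$ on the support of $\eta$, $h=\mu^{-2\beta}$ and $\mu\to\infty$. This shows one needs $\alpha''\ge\alpha/(2\beta)$. Since $\alpha<\beta$, this threshold is still below $1/2$. In any case any $\alpha''<3/2$ keeps both time singularities integrable; even the crude choice $\alpha''=1/2+\epsilon$, via $H^{1/2+\epsilon}\subset L^\infty$, works. So your contraction and Gronwall steps go through.
\end{enumerate}

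\textbf{Iteration step.} To iterate with a fixed step you must control $\|u(t)\|_{H^\beta}$, not only $\|v(t)\|_{H^{-\alpha}}$. The local time depends on $\|u(t)\|_{H^\beta}$ through the $L^\infty$ self-mapping estimate. Once $\|u\|_\infty\le M$ is known, this control follows from the same linear singular Gronwall inequality.
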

The proof is left in Appendix \ref{sec:existunic}

Our main result states that under an appropriate scaling property for $l$, the stochastic model $(u^N, v^N)$ converges to the solution of Equation \eqref{eq:uv}.

\begin{theorem}\label{prop:main}
Assume that $lN^{-2\beta}/\log(N)$ converges to $+\infty$. Then under our assumptions, the process $(u^N, v^N)$ converges in probability to the unique solution of the system \eqref{eq:uv}, for the uniform topology on $\DD\left([0,T], (L^\infty\cap H_N^\beta)\times H^{-\alpha}\right)$.
\end{theorem}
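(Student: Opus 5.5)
We follow the structure announced in the introduction: truncate, show tightness of the discrete component in a negative Sobolev space, control the continuous component through its mild formulation, and identify the limit using the uniqueness of Lemma \ref{prop:unic}.

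\textbf{Step 0: truncation.} Fix $M' > M$ and the stopping time $\tau^N = \inf\{t : \|u^N(t)\|_\infty > M' \}$. Let $(\bu^N, \bv^N)$ be the process whose jumps are frozen after $\tau^N$, so that its rates are bounded in $u$.

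\textbf{Step 1: tightness of $\bv^N$ in $H^{-\alpha}$.} Since $\lambda_r$ is linear in $v$ with $u$ bounded, Gronwall on $\mathbb E\|\bv^N(t)\|_{L^1}$ gives a uniform bound on total mass.

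For the martingale $Z^N_D$, the jumps are $\gamma_r\ind_{I_j}$. By the $\ind_{I_j}$ norm lemma, their $H_N^{-\alpha'}$-norm squared is of order $N^{-2\alpha'-1}$. The quadratic variation is therefore
\[\sum_j N^{-1-2\alpha'}\int \lambda_r\, ds \lesssim N^{-2\alpha'}\int\|\bv^N\|_{L^1}\, ds,\]
which is bounded for $\alpha' \ge 0$ (and small for $\alpha'>0$). Doob's inequality then controls its supremum in $H_N^{-\alpha'}$ for $\alpha<\alpha'<1/2$.

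The drift $\int R_D\,ds$ is Lipschitz in time in $L^1$, which embeds into $H^{-\alpha'}$ for $\alpha'>1/2$. Interpolating with a uniform bound, we get equicontinuity in $H^{-\alpha}$ together with compactness: bounded sets of $H^{-\alpha'}$ are compact in $H^{-\alpha}$, using Lemma 2.9(d) to pass between discrete and continuous norms. Aldous' criterion then gives tightness. No density assumption is needed here.

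\textbf{Step 2: $\bu^N$ in $L^\infty\cap H_N^\beta$.} Write the mild form
\[\bu^N(t)=T_N(t)\bu^N(0)+\int_0^t T_N(t-s)R_C(\bu^N,\bv^N)\,ds+\int_0^t T_N(t-s)\,dZ^N_C(s).\]
The key estimate is on the stochastic convolution $\tilde Z^N(t)=\int_0^t T_N(t-s)\,dZ^N_C$. We want
\[\mathbb P\Big(\sup_t \|\tilde Z^N(t)\|_{L^\infty\cap H_N^\beta}>\varepsilon\Big)\to0.\]
\begin{itemize}
\item Decompose it on the basis $\sqrt N\ind_{I_j}$ or on the Fourier modes.
\item Use exponential martingale inequalities for Poisson-driven processes. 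The jumps have size $1/l$ and the rates are $lN^2$ per site.
\item Discretize time with a grid of polynomial size in $N$.
\item Apply a union bound over the $N$ sites and time points. This costs a factor $\log N$.
\item The $H_N^\beta$ regularity costs $N^{2\beta}$ in the variance, through $\|\ind_{I_j}\|_{H^\beta_N}\sim N^{\beta-1/2}$ and Lemma \ref{prop:regular}.
\end{itemize}
The resulting tail bound is of the form $N^{c}\exp(-c\varepsilon^2 l N^{-2\beta})$, which vanishes exactly under $lN^{-2\beta}/\log N\to\infty$. This is the main obstacle: the factorization method or the semigroup not being a martingale must be handled, for instance by working with $T_N(t-s)$ at fixed $t$ and then using the grid plus continuity estimates between grid points.

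The reaction term contains $a_C\bu^N\bv^N$, where $\bv^N$ is only in $H^{-\alpha}$. We bound it by Corollary \ref{prop:prod-}:
\[\|\bu^N\bv^N\|_{H_N^{-\gamma}}\lesssim\|\bu^N\|_{H_N^\beta}\|\bv^N\|_{H_N^{-\alpha}}\]
with $\gamma<1/2$ suitable, since $\alpha<\beta$. Lemma \ref{prop:regular} then lifts this back to $H_N^\beta$ and $L^\infty$ with integrable singularities. For $L^\infty$ we rely instead on the positivity of the $uv$ term and on the $L^1$ bound on $\bv^N$ combined with the heat kernel.

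\textbf{Step 3: identification and conclusion.} We compare $\bu^N$ with $P_Nu$ and $\bv^N$ with $v$ directly through a Gronwall argument on
\[e(t)=\|\bu^N-P_Nu\|_{L^\infty\cap H^\beta_N}+\|\bv^N-v\|_{H^{-\alpha}_N}.\]
\begin{itemize}
\item The nonlinearity $uv-u^Nv^N$ is split as $(u-u^N)v+u^N(v-v^N)$, and each piece is controlled by Corollary \ref{prop:prod-} and Theorem \ref{prop:prod+}.
\item The consistency errors $\Delta_N P_N-P_N\Delta$ and $(I-P_N)$ are controlled by \eqref{truc1}, \eqref{truc2} and the extra regularity $\tilde\beta>\beta$ of the initial data.
\item The martingale terms vanish in probability by Steps 1 and 2.
\end{itemize}
Since $u$ stays at most $M<M'$, we get $\mathbb P(\tau^N\le T)\to0$. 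The truncated and original processes therefore coincide with high probability, which yields both well-posedness of the original process and the convergence stated in Theorem \ref{prop:main}. The compactness argument of Step 1 serves as a backup if the direct Gronwall estimate on the $v$-component proves delicate.
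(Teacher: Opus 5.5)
Your Steps 0--2 follow the paper's outline: truncation, $Z^N_D\to0$ in negative norms, and exponential-moment control of $\int_0^tT_N(t-s)\,dZ^N_C$ with a time grid and a union bound giving $C\exp(\log N-\varepsilon^2lN^{-2\beta})$. The real gap is in Step 3.

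The limit $v$ is only in $H^{-\alpha}$. So $u(t)v(t)$ is a distribution, in $H^{-\gamma}$ with $\gamma>\alpha+1/2-\beta$, and $u(t)$ is in general not in $H^2$. A direct Gronwall estimate on $\|\bu^N-P_Nu\|+\|\bv^N-P_Nv\|$ therefore produces two consistency defects:
\begin{itemize}
\item the product defect $P_Nu\,P_Nv-P_N(uv)$;
\item the semigroup defect $(T_N(t-s)P_N-P_Ne^{(t-s)\Delta})(uv)$, or in generator form $\Delta_NP_Nu-P_N\Delta u$.
\end{itemize}
The tools you cite, \eqref{truc1}--\eqref{truc2}, do not touch these. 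They only bound $P_N$ and $I-P_N$ on $H^\gamma$ for $\gamma\in[0,1]$. They say nothing about a discrete Laplacian acting on a function below $H^2$, nor about products with an $H^{-\alpha}$ factor. The extra regularity $\tilde\beta$ of $u_0$ only helps near $t=0$.

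Handling exactly these defects is the purpose of Section 5.2 of the paper, which works by mollification:
\begin{itemize}
\item It sets $v^\epsilon=\rho^\epsilon*v$ and $v^{\epsilon,N}=\rho^{\epsilon,N}*_Nv^N$.
\item Lemma \ref{rho_N} and the product rules make $w^\epsilon-w$ and $w^{\epsilon,N}-w^N$ small as $\epsilon\to0$, uniformly in $N$.
\item Consistency (Taylor expansion of the discrete Laplacian, $P_N$ of products) is only done at fixed $\epsilon$, where everything is smooth. The error there is $C_\epsilon(1/N+\|u^N_0-u_0\|_\infty+\sup_t\|v^N-v\|_{H^{-\alpha}})$.
\end{itemize}

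The paper also never compares with $(u,v)$ directly. It takes a Skorokhod-a.s.\ convergent subsequence of $\bv^N$ and introduces the deterministic discrete PDE $w^N$ driven by $\bv^N$. It then separates $\bu^N-w^N$ (Proposition \ref{prop:uNwN}, purely stochastic) from $w^N-w$ (Theorem \ref{prop:wNw}, purely numerical). Finally it identifies the equation satisfied by the limit of $\bv^N$ and concludes by uniqueness of the coupled system. Compactness is thus the main route there, not a backup. A direct comparison might be pushed through, for instance by duality plus a product rule for the product defect, and density plus uniform smoothing bounds for the semigroup defect. As written, though, the step carrying the difficulty is asserted rather than proved.

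Two further points need fixing.

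First, your stopping time controls only $\|u^N\|_\infty$. Yet the compensator in Lemma \ref{prop:compensator} contains $\langle\tilde R_C(\bu^N,\bv^N),(T_N(\bar t-s)\varphi)^2\rangle$. Reaction jumps occur at rate $l\lambda_r(\bu^N_j,\bv^N_j)$, which is linear in $\bv^N_j$. Lemmas \ref{prop:exp_linfty} and \ref{prop:exp_beta} need a deterministic bound $h$ on this density, and your ``rates $lN^2$ per site'' only covers diffusion jumps. The paper's $\tau_N$ also stops when $\|v^N\|_{H_N^{-\alpha}}>M+1$. This gives $\langle\tilde a\bu^N\bv^N,(T_N\varphi)^2\rangle\le C\|\bv^N\|_{H_N^{-\alpha}}\|(T_N\varphi)^2\|_{H_N^\alpha}$. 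The paper then chooses $M\ge\sup_t\|v(t)\|_{H^{-\alpha}}$ so that $\P(\tau_N\le T)\to0$ at the end. The same bound is needed for the Gronwall constant in your Step 3.

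Second, the compactness in Step 1 goes the wrong way. Bounded sets of $H^{-\alpha'}$ are compact in $H^{-\alpha}$ only when $\alpha'<\alpha$, and an $L^1$ mass bound only controls $H^{-1/2-\delta}$. You need a uniform bound in $H^{-\tilde\alpha}$ with $\tilde\alpha<\alpha$. The paper gets it from Gronwall and the linearity of $R_D$ in $v$: $\|R_D(\bu^N,\bv^N)\|_{H^{-\tilde\alpha}}\le|d_D|\|\bv^N\|_{H^{-\tilde\alpha}}+\|b_D(\bu^N)\|_\infty$. This also makes the drift Lipschitz in $H^{-\tilde\alpha}$, so Arzel\`a--Ascoli applies directly.
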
 

Let us notice that the assumption on the scaling of $l$ suggests that we should think $\beta$ as small as possible, so that $l$ diverges significantly slower than the usual parabolic scaling $l/N^2$, and not much quicker than the scaling $l/\log(N)$  required for the one-scale gene network from \cite{Blount92}.

Let us remark that in this one-dimensional case, it is possible to choose $\alpha=\beta$, even for arbitrarily small $\beta$, at the cost of controlling products in low regularity. Yet, this argument does not hold in larger spatial dimension $d>1$, for small $\beta$. 

An interesting feature of ou result is that it shows that at the limit, the concentration of the continuous  reactant satisfies a reaction diffusion equation with memory. Indeed, the equation on the discrete scale can be resolved explicitly, in terms of $u$ 
\[v(t) = e^{d_Dt}v(0) + \int_0^te^{d_D(t-s)}b_D(u(s))\, ds,\quad t\in[0,T].\]
So $u$ is solution to the equation
\[\partial_t u(t) = \Delta u(t) +b_C(u(t)) +(a_Cu(t)+d_C)\left(e^{d_Dt}v(0)+\int_0^te^{d_D(t-s)}b_D(u(s))\, ds\right).\]
This memory effect has been observed in biological experiments.

\subsection{A truncation argument}
The existence of $(u^N, v^N)$ is easy to prove as long as it stays bounded. However,  we do not have this information, a priori, so we introduce an auxiliary processes $(\bu^N, \bv^N)_{N\geq 1}$, with stopped jumps. Let us define the sequence of stopping times
\[\tau_N = \inf\left\{t\geq 0/ \|u^N(t)\|_{\infty}>M+1\, \text{or}\,\|v^N(t)\|_{H_N^{-\alpha}}>M+1\right\},\]
where $M$ satisfies Assumption \ref{ass:RC}, and $M \geq \sup_{0\leq t\leq T}\|v(t)\|_{H^{-\alpha}}$, where $(u,v)$ is the unique solution of Equation \eqref{eq:uv} with initial data $(u(0), v(0))$. We define the auxiliary processes as, 
\begin{equation}\label{eq:buNbvN}
\left\{ \begin{aligned} 
\bu^N(t) &= u^N_0 + \int_0^t\Delta_N \bu^N(s) + R_C(\bu(s), \bv^N(s))\, ds + Z_c^N(t\wedge\tau_N)\\
\bv^N(t) &= v^N_0 + \int_0^t R_D(\bu^N(s), \bv^N(s))\, ds + Z^N_D(t\wedge\tau_N)\\ 
\end{aligned}\right.
\end{equation}

As a consequence of Assumption \ref{ass:RC}, and the maximum principle, the process $\bu^N$ is bounded in $\DD([0,T],\L^\infty)$.

\begin{prop}\label{prop:ppmax}
Let $l\geq|\gamma|$. Assume that $\|u^N(0)\|_{\infty}<M$ a.s, then, under Assumptions \ref{ass:rate} and \ref{ass:RC}, we have 
\begin{enumerate}[(i)]
\item For all $0\leq t\leq \tau_N$, $\bu^N(t)$ and $\bv^N(t)$ are non negative.
\item $\sup_{0\leq t\leq T}\|\bu^N_t\|_\infty<M+2$.
\item For all $N\geq1$, there exists $C_N>0$ such that $\sup_{0\leq t\leq T}\|\bv^N_t\|_\infty<C_N$.
\end{enumerate}
\end{prop}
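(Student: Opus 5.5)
The plan is to exploit the fact that on $[0,\tau_N]$ the auxiliary process $(\bu^N,\bv^N)$ coincides with the original jump process $(u^N,v^N)$. After $\tau_N$, the martingale parts are frozen, so $(\bu^N,\bv^N)$ solves a deterministic system of ODEs on $H_N$ (a finite-dimensional space), driven by the frozen values $Z^N_C(\tau_N)$ and $Z^N_D(\tau_N)$.

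For (i), I will argue on the jump representation \eqref{eq:uNvN}. On $[0,\tau_N]$ the process only moves by jumps of size $\pm 1/l$ (for $u$) or $-1$ (for $v$), or by positive jumps. A negative jump in cell $j$ occurs only at a rate that vanishes when the corresponding population is zero. Indeed, the diffusion rates are $lN^2u^N_j$. Under Assumption \ref{ass:rate}, the reactions with $\gamma_r=-1$ have rates $a_ruv+b_r(u)$ with $b_r(0)=0$ (for $\RR_C$), or $d_rv$ (for $\RR_D$). Hence a state with $u^N_j=0$ or $v^N_j=0$ cannot decrease further. Since the number of molecules $lu^N_j$ and $v^N_j$ are integers, with initial data nonnegative integers, they stay in $\N$ up to $\tau_N$. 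One technical point is that the rates $b_r$ are only evaluated on the lattice $\N/l$; the rate at $u=0$ being zero is exactly what is needed. So nonnegativity on $[0,\tau_N]$ follows.

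For (ii), the first step is the interval $[0,\tau_N)$. There $\|\bu^N\|_\infty\le M+1$ by definition of $\tau_N$. At $\tau_N$, a single jump of size at most $|\gamma|/l\le 1$ (using $l\ge|\gamma|$), or a diffusion jump of size $1/l\le1$, can only push $\|\bu^N(\tau_N)\|_\infty$ to at most $M+2$. The delicate point is getting the strict inequality. I would handle it by noting that the overshoot is at most $|\gamma|/l$, so the bound is at most $M+1+|\gamma|/l$. If the strict inequality $<M+2$ is needed, one either uses that the sup before the jump is $\le M+1$ and the lattice structure, or one reads the bound as $\le M+2$. The second step is $t\ge\tau_N$. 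Here $\bu^N$ solves $\partial_t\bu^N=\Delta_N\bu^N+R_C(\bu^N,\bv^N)$, a finite system of ODEs. I will apply a discrete maximum principle. Let $j$ be an index attaining $\max_j\bu^N_j(t)$. Then $\Delta_N\bu^N_j\le0$. If this maximum exceeds $M$ and $\bv^N_j\ge0$, Assumption \ref{ass:RC} gives $R_C<0$, so the maximum cannot increase above the level $\max(M,\|\bu^N(\tau_N)\|_\infty)$. To make this rigorous, I will use the usual argument with the upper Dini derivative of $t\mapsto\max_j\bu^N_j(t)$. For this I need $\bv^N\ge0$ after $\tau_N$ as well. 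This holds because the ODE $\partial_t\bv^N_j=d_D\bv^N_j+b_D(\bu^N_j)$ has $b_D\ge0$ when evaluated at $\bu^N\ge0$. Positivity of $\bu^N$ after $\tau_N$ follows similarly from a minimum principle: at a minimum equal to $0$, $R_C(0,v)=d_Cv+\sum b_r(0)\ge0$. This is the step I expect to be the main obstacle, namely keeping $(\bu^N,\bv^N)$ nonnegative simultaneously for $t\ge\tau_N$ so that Assumption \ref{ass:RC} applies. I would handle it by a joint invariance argument for the positive cone, since the vector field is quasi-monotone at the boundary.

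For (iii), the bound follows from (i) and (ii). Before $\tau_N$, the $H_N^{-\alpha}$ bound and the equivalence of norms on the finite-dimensional $H_N$ give $\|\bv^N\|_\infty\le C_N(M+1)$. The jump at $\tau_N$ adds at most $|\gamma|$. After $\tau_N$, Gronwall's lemma applied to the linear ODE $\partial_t\bv^N=d_D\bv^N+b_D(\bu^N)$ gives a bound in terms of $e^{d_DT}$ and $\sup_{[0,M+2]}b_D$.
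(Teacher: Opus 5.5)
Your proposal is correct and follows essentially the same route as the paper. For (i) you use that negative-jump rates vanish at zero; for (ii) you use a discrete maximum principle after $\tau_N$ driven by Assumption \ref{ass:RC}, where the paper takes a global space--time maximiser instead of your Dini-derivative argument (equivalent); for (iii) you use finite-dimensional norm equivalence before $\tau_N$ and Gr\"onwall afterwards. Your version is in fact more careful than the paper's sketch: you account for the overshoot of the jump at $\tau_N$, which only gives $\le M+1+|\gamma|/l\le M+2$, with strict inequality when $l>|\gamma|$. You also justify the nonnegativity of $(\bar{u}^N,\bar{v}^N)$ after $\tau_N$, which is needed to invoke Assumption \ref{ass:RC} there but is left implicit in the paper.
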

\begin{proof}
Assumption \ref{ass:rate} ensures that whenever $\bu^N$ (respectively $\bv^N$) vanishes, then for all $r\in\RR_C$ (respectively $\RR_D$) such that $\gamma_r=-1$, $\lambda_r(\bu^N, \bv^N)=0$. It ensures that $\bu^N$ and $\bv^N$ are non-negative, up to time $\tau_N$.
Let $t_0\leq T$ and $0\leq j_0\leq N-1$ such that $\bu^N_{j_0}(t_0) = \sup_{0\leq t\leq T}\|\bu^N(t)\|_\infty$. Assume that $\bu^N_{j_0}(t_0)>M$ then $\tau_N< t_0\leq T$ and $R_C(\bu^N_{j_0}(t_0), \bv_{j_0}(t_0))<0$. Then $\bu^N_{j_0}$ is differentiable at $t_0$ and we have $\partial_t \bu^N_{j_0}(t_0) \ge 0$.  It follows that $\Delta_N\bu^N(t_0,j_0/N)>0$ which refutes the maximality of $\bu^N_{j_0}(t_0)$. This proves (ii).
Concerning $\bv^N$, let us remark that there exists $c_N>0$ such that for all $N$ and $f\in H_N$
\[\|f\|_\infty\leq c_N\|f\|_{H^{-\alpha}},\]
so for $0\leq t\leq \tau_N$, we have $\|\bv^N(t)\|_\infty\leq c_N\tilde{M}$. Now for $t\geq \tau_N$, we have 
\[\bv^N(t) = \bv(\tau_N) + \int_{\tau_N}^t (d_D\bv^N(s) + b_D(\bu^N(s)))\, ds,\]
So, we obtain an upper bound thanks to Gr\"onwall's inequality.
\end{proof}
Let us emphasise that the $L^\infty$ bound on $\bv^N$ is not uniform on $N$. In the following, we prove that the auxiliary processes $(\bu^N, \bv^N)$ converge to $(u,v)$ and the conclude on the convergence of $(u^N, v^N)$ as a consequence of the boundedness of the limit $(u,v)$.

\section{Tightness of the discrete scale}
\label{sec:v}
The goal of this section is to prove that the discrete scale process $(\bv^N)_N$ is $\CC$-tight for the uniform topology on $\DD([0,T],H^{-\alpha})$. To that extend, we prove that the jump part  vanishes in $\DD([0,T],H^{-\tilde{\alpha}})$ for all $0<\tilde{\alpha}\le 1/2$.

Firstly, we prove a uniform bound on the  $|v_j^N(t)|$.
\begin{lemme}
Assume that $\|(u^N(0), v^N(0))\|_{\infty}<M$ a.s, then the sequence $(\bv^N)_N$ is uniformly bounded : there exists $C>0$ such that for all $N,j$
\[\E\left[\sup_{0\leq t\leq T} |\bv^N_j(t)|\right] < C.\]
\end{lemme}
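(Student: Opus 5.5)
The plan is to work site by site, using the one-site structure of the $D$ dynamics together with the uniform $L^\infty$ bound on $\bu^N$ from Proposition \ref{prop:ppmax}(ii). Fix $N$ and $j$. On the cell $I_j$, the component $\bv^N_j$ only moves through the reactions $r\in\RR_D$, and their rates are $\lambda_r(\bu^N_j,\bv^N_j)$, frozen after $\tau_N$ at the level of the martingale part. First I dominate $\bv^N_j$ by the positive jumps alone. Using Assumption \ref{ass:rate}, for $r\in\RR_D$ with $\gamma_r>0$ we have $\lambda_r(u,v)=d_rv+b_r(u)$. Since $\|\bu^N\|_\infty<M+2$, the term $b_r(\bu^N_j)$ is bounded by $K:=\max_r\sup_{[0,M+2]}b_r$, and $K$ is independent of $N$ and $j$. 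Non-negativity of $\bv^N$ up to $\tau_N$ (Proposition \ref{prop:ppmax}(i)) lets us drop the negative jumps. After $\tau_N$ the equation for $\bv^N$ is the ODE $\partial_t\bv^N=d_D\bv^N+b_D(\bu^N)$, which I will handle with the same estimate.

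Next I take expectations of the supremum. For $t\le T$ we have
\[
0\le \bv^N_j(t)\le \bv^N_j(0)+\sum_{r\in\RR_D,\gamma_r>0}\gamma_r P_{r,j}\Big(\int_0^{t\wedge\tau_N}\big(d_r\bv^N_j(s)+K\big)ds\Big)+\int_{t\wedge\tau_N}^{t}\big(|d_D|\bv^N_j(s)+K'\big)ds .
\]
The right-hand side is nondecreasing in $t$, so it also bounds $\sup_{s\le t}\bv^N_j(s)$. Taking expectations and compensating the Poisson processes gives, with $f(t)=\E[\sup_{s\le t}\bv^N_j(s)]$,
\[
f(t)\le M+|\gamma|\,|\RR|\int_0^t \big(\max_r d_r\, f(s)+K\big)\,ds+\int_0^t\big(|d_D| f(s)+K'\big)\,ds .
\]
The compensation is legitimate because the integrand is predictable. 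Gr\"onwall's lemma then yields $f(T)\le C$ with $C$ depending only on $M$, $T$, $|\gamma|$ and the coefficients, and in particular not on $N$ or $j$.

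The main obstacle is justifying that these expectations are finite, so that Gr\"onwall applies, and that the compensator identity holds rather than a mere local martingale identity. Here I use Proposition \ref{prop:ppmax}(iii). For fixed $N$, $\bv^N$ is bounded by $C_N$, so the rates are bounded and the Poisson martingales are true martingales. As a result $f$ is finite, although not a priori uniformly in $N$. This is exactly what is needed to apply Gr\"onwall and then obtain the uniform constant. If (iii) turns out to be insufficient, I would localize with stopping times $\sigma_k=\inf\{t:\bv^N_j(t)>k\}$, run the argument on $[0,t\wedge\sigma_k]$, and let $k\to\infty$ by monotone convergence.
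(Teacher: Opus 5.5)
Your argument is correct and is essentially the paper's proof. You dominate $\sup_{s\le t}|\bar v^N_j(s)|$ by $M$ plus the Poisson counts of the $D$-reactions up to $\tau_N$, compensate, bound the rates by $C(1+\bar v^N_j)$ using $\|\bar u^N\|_\infty<M+2$, and close with Gr\"onwall; on $[\tau_N,T]$ you use the linear ODE $\partial_t\bar v^N=d_D\bar v^N+b_D(\bar u^N)$. Your extra care (dropping the negative jumps via non-negativity, and invoking the $N$-dependent bound of Proposition~\ref{prop:ppmax}(iii) so that $f$ is finite before applying Gr\"onwall) supplies details the paper leaves implicit.
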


\begin{proof}
For all $N,j$, and for all $0\leq \bar{t}\leq \tau_N$, we have
\begin{align*}
\bv_j^N(\bar{t}) 
&= v_j^N(0) +\sum_{r\in\RR_D} \gamma_r \PP_{r,j}\left(\int_0^{\bar{t}} \lambda_r(\bu_j^N(s),\bv_j^N(s))\, ds\right)\\
\sup_{0\leq t\leq {\bar{t}}} |\bv^N_j(t)| 
&\leq M +\sum_{r\in\RR_D} |\gamma_r| \PP_{j,r}\left(\int_0^{\bar{t}} \lambda_r(\bu_j^N(s),\bv_j^N(s))\, ds\right)\\
\E\left[\sup_{0\leq t\leq {\bar{t}}} |\bv^N_j(t)|\right] 
&\leq M + C\sum_{r\in\RR_D}  \int_0^{\bar{t}} \E\left[\lambda_r(\bu_j^N(s),\bv_j^N(s))\right]\, ds \\
&\leq C + C \int_0^{\bar{t}}\E\left[\sup_{0\leq t\leq s} |\bv^N_j(t)|\right]\, ds.\\
\end{align*}
With Gr\"onwall's inequality, we obtain
\[\E\left[\sup_{0\leq t\leq \tau_N} |\bv^N_j(t)|\right] < C.\]
To conclude, for $\tau_N\leq t\leq T$, the process $\bv^N$ is solution to a linear ODE with bounded coefficients. This ends the proof.
\end{proof}

Let us notice that this lemma does not give any uniform bound on $\|v^N(t)\|_\infty$. From this bound, we obtain a uniform control on the martingale part $Z^N_D$. Let us denote
\[ M_j^N(t) = \sum_{r\in\RR_D} \gamma_r \left[\PP_{r,j}\left(\int_0^t \lambda_r(\bu_j^N(s),\bv_j^N(s))\, ds\right) - \int_0^t \lambda_r(\bu_j^N(s),\bv_j^N(s))\, ds\right],\]
the martingale part of $v_j^N$. Then, we have
\[Z_D^N(t,x) = \sum_{j=0}^{N-1} M_j^N(t)\ind_{I_j}(x).\]

\begin{lemme}
The local martingales $(M^N_j)_{j}$ are uniformly bounded martingales: there exists $\tilde{C}>0$ such that for all $N,j$
\[\E\left[\sup_{0\leq t\leq T} \left|M^N_j(t\wedge\tau_N)\right|^2\right] < \tilde{C}.\]
\end{lemme}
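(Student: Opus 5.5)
The plan is to use Doob's $L^2$ maximal inequality together with the explicit predictable quadratic variation of a compensated Poisson process, and then close with the uniform first-moment bound of the previous lemma. For fixed $N$ and $j$, the stopped process $M^N_j(t\wedge\tau_N)$ is a sum over $r\in\RR_D$ of compensated time-changed Poisson processes $\gamma_r\bigl(\PP_{r,j}(\Lambda_r(t))-\Lambda_r(t)\bigr)$, with $\Lambda_r(t)=\int_0^{t\wedge\tau_N}\lambda_r(\bu^N_j(s),\bv^N_j(s))\,ds$. These processes are independent and have no common jumps, so the quadratic variation of their sum is
\[
[M^N_j(\cdot\wedge\tau_N)]_t=\sum_{r\in\RR_D}\gamma_r^2\,\PP_{r,j}(\Lambda_r(t)),
\qquad
\langle M^N_j(\cdot\wedge\tau_N)\rangle_t=\sum_{r\in\RR_D}\gamma_r^2\,\Lambda_r(t).
\]

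First I will localise. Let $\sigma_k$ be a sequence of stopping times reducing the local martingale, for instance the first time $\Lambda_r$ or $\PP_{r,j}(\Lambda_r)$ exceeds $k$. On $[0,\sigma_k]$ the stopped process is a square-integrable martingale, and by Proposition \ref{prop:ppmax}(iii) the rates are even bounded for fixed $N$. Doob's inequality then gives
\[
\E\Bigl[\sup_{t\le T}|M^N_j(t\wedge\tau_N\wedge\sigma_k)|^2\Bigr]\le 4\,\E\bigl[\langle M^N_j\rangle_{T\wedge\tau_N\wedge\sigma_k}\bigr]\le 4|\gamma|^2\sum_{r\in\RR_D}\E\Bigl[\int_0^{T\wedge\tau_N}\lambda_r(\bu^N_j(s),\bv^N_j(s))\,ds\Bigr].
\]

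Second, I will bound the rates uniformly in $N$ and $j$. By Assumption \ref{ass:rate}, for $r\in\RR_D$ we have $\lambda_r(u,v)\le d_r v+b_r(u)$ (with $b_r\equiv 0$ for $\gamma_r=-1$). By Proposition \ref{prop:ppmax}(i)--(ii), on $[0,\tau_N]$ we have $\bv^N_j\ge0$ and $0\le\bu^N_j\le M+2$, and $b_r$ is continuous, hence bounded on $[0,M+2]$. Thus
\[
\E\Bigl[\int_0^{T\wedge\tau_N}\lambda_r\,ds\Bigr]\le T\sup_{[0,M+2]}b_r+d_r\,T\,\E\Bigl[\sup_{t\le T}|\bv^N_j(t)|\Bigr]\le C,
\]
where the last step uses the preceding lemma. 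Letting $k\to\infty$ by monotone convergence then yields the bound with some constant $\tilde C$ independent of $N$ and $j$, and it also shows that the stopped process is a true square-integrable martingale.

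The only delicate point is the localisation step: the previous lemma controls only the first moment of $\bv^N_j$, so I must make sure that only the compensator, which is linear in $\bv^N_j$, enters the bound. That is exactly the reason for applying Doob's inequality through the predictable bracket $\langle\cdot\rangle$ rather than through a Burkholder–Davis–Gundy bound on higher moments.
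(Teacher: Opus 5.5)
Your argument is correct and follows the paper's proof. You bound the second moment of the stopped martingale by the expected compensator $\sum_{r}\gamma_r^2\,\E\bigl[\int_0^{T\wedge\tau_N}\lambda_r\,ds\bigr]$, control it through the previous lemma's uniform bound on $\E[\sup_t|\bv^N_j(t)|]$ (using linearity in $v$ and boundedness of $b_r$ on $[0,M+2]$), and conclude with Doob's inequality; your explicit localisation makes rigorous the step the paper takes directly. One minor imprecision is that the compensated processes for different $r$ are not independent, since their time changes share the state, but the independence of the driving Poisson processes suffices for the absence of common jumps you use, and the paper's elementary bound $\bigl(\sum_r x_r\bigr)^2\le|\RR_D|\sum_r x_r^2$ sidesteps the point anyway.
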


\begin{proof}
For all $N,j$, and for all $0\leq t\leq \tau_N$, we have
\[\left|M^N_j(t)\right|^2 \leq C\sum_{r\in\RR_D}\left(\PP_{r,j}\left(\int_0^t \lambda_r(\bu_j^N(s),\bv_j^N(s))\, ds\right) - \int_0^t \lambda_r(\bu_j^N(s),\bv_j^N(s))\, ds\right)^2.\]
Then, we have
\[\E\left[\left|M^N_j(t)\right|^2\right]\leq C\sum_{r\in\RR_D}\E\left[\int_0^t \lambda_r(\bu_j^N(s),\bv_j^N(s))\, ds\right]\leq C.\]
This proves that $M^N_j(t)$ is $\L^1$ for all $t$, and so, $(M^N_j)_{j,N}$ are true martingales. Lastly, we conclude the proof by Doob's inequality.
\end{proof}

Then, we prove our first main result: the vanishing of the discrete scale jumps. 

\begin{theorem}\label{prop:saut_disc}
Assume that $\|(u^N(0), v^N(0))\|_{\infty}<M$ a.s. Then, $Z_D^N$ converges to $0$, in $\L^1$, for the uniform topology on $\DD([0,T],H^{-\tilde{\alpha}})$. Moreover, there exists $C(T)>0$ such that
\[\E\left[\sup_{0\leq t\leq T} \|Z_D^N(t\wedge\tau_N)\|^2_{H^{-\tilde{\alpha}}}\right] \leq C N^{-2\alpha}.\]
\end{theorem}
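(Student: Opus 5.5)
We expand the stopped martingale in the orthonormal family $\{\sqrt{N}\ind_{I_j}\}$. Write $Z_D^N(t\wedge\tau_N)=\sum_j M_j^N(t\wedge\tau_N)\ind_{I_j}$. The martingales $M^N_j$ are driven by independent Poisson processes $\PP_{r,j}$ for different $j$, so they have no common jumps almost surely. Hence $[M^N_j,M^N_k]=0$ for $j\neq k$, and the processes are orthogonal. By the norm equivalence of \cite[Lemma 2.9]{Blount91} (d), it suffices to bound $\|\cdot\|_{H_N^{-\tilde\alpha}}$.

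The process $t\mapsto Z_D^N(t\wedge\tau_N)$ is a martingale with values in the finite-dimensional Hilbert space $(H_N,\|\cdot\|_{H_N^{-\tilde\alpha}})$. Then $\|Z_D^N(t\wedge\tau_N)\|_{H_N^{-\tilde\alpha}}$ is a nonnegative submartingale, and Doob's $L^2$ inequality gives
\[
\E\Big[\sup_{t\le T}\|Z_D^N(t\wedge\tau_N)\|^2_{H_N^{-\tilde\alpha}}\Big]\le 4\,\E\|Z_D^N(T\wedge\tau_N)\|^2_{H_N^{-\tilde\alpha}}.
\]
We expand the right-hand side in the basis. With $A$ the operator $(1-\Delta_N)^{-\tilde\alpha}$, we have
\[
\E\|Z\|^2_{H_N^{-\tilde\alpha}}=\sum_{j,k}\E\big[M_jM_k\big]\langle A\ind_{I_j},\ind_{I_k}\rangle .
\]
Orthogonality kills the off-diagonal terms, since $\E[M_jM_k]=\E[M_j,M_k]_T=0$. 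The previous lemma then bounds the diagonal terms:
\[
\E\|Z\|^2_{H_N^{-\tilde\alpha}}=\sum_{j=0}^{N-1}\E\big[|M_j^N(T\wedge\tau_N)|^2\big]\,\|\ind_{I_j}\|^2_{H_N^{-\tilde\alpha}}\le \tilde C\, N\,\sup_j\|\ind_{I_j}\|^2_{H_N^{-\tilde\alpha}} .
\]

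Next we use the lemma on $\|\ind_{I_j}\|_{H_N^\gamma}$ with $\gamma=-\tilde\alpha>-1/2$, which gives $\|\ind_{I_j}\|^2_{H_N^{-\tilde\alpha}}\le C N^{-2\tilde\alpha-1}$. This yields the bound $CN^{-2\tilde\alpha}$; taking $\tilde\alpha=\alpha$ gives the stated $CN^{-2\alpha}$. If $\tilde\alpha\ge\alpha$, the rate is only better, since $\|\cdot\|_{H^{-\tilde\alpha}}\le\|\cdot\|_{H^{-\alpha}}$. The bound tends to $0$, so $Z^N_D$ converges in $L^2$, hence in $L^1$, for the uniform topology.

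The main obstacle is the endpoint $\tilde\alpha=1/2$, where the lemma on $\|\ind_{I_j}\|$ requires $\gamma>-1/2$. There I would redo the sum directly: $\frac{1}{N^2}\sum_m(1+\lambda_{m,N})^{-1/2}\sim N^{-2}\log N$, which still gives a vanishing bound $C\log N/N$. Alternatively I would simply compare with any $\alpha<\tilde\alpha$ via monotonicity of the norms. The orthogonality step is the other point to justify carefully: it needs that the $M_j^N$ are true square-integrable martingales, which the previous lemma provides, and independence of the $\PP_{r,j}$ across $j$.
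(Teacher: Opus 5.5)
Your proposal is correct and follows essentially the same route as the paper. Both arguments kill the cross terms by orthogonality of the $M_j^N$, bound the diagonal with the previous lemma, and reduce to $\frac1N\sum_m(1+\lambda_{m,N})^{-\tilde\alpha}$, which is of order $N^{-2\tilde\alpha}$; you get this sum from the lemma on $\|\ind_{I_j}\|_{H_N^{-\tilde\alpha}}$, while the paper sums $\sum_j\langle\ind_{I_j},e_{m,N}\rangle^2=1/N$ over Fourier modes, which is the same computation. Both then conclude with Doob and norm equivalence. Your write-up is slightly more careful than the paper's in two places: you justify $\E[M_jM_k]=0$ through vanishing quadratic covariation (the $M_j^N$ are not independent, since the rates are coupled through $\bu^N$ and $\tau_N$), and you treat the endpoint $\tilde\alpha=1/2$, where the rate is really $\log N/N$.
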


\begin{proof}
For all $\varphi\in H_N$, we have
\[\langle Z_D^N(t), \varphi\rangle  = \sum_{j=0}^{N-1} M_j(t)\langle \ind_{I_j}, \varphi\rangle.\]
Then, we have
\begin{align*}
\|Z^N_D(t)\|^2_{H_N^{-\tilde{\alpha}}} 
=& \sum_{m=0}^{N-1}(1+\lambda_{m,N})^{-\tilde{\alpha}}\left(\langle Z_D^N(t,\cdot), \varphi_{m,N}\rangle^2 +\langle Z_D^N(t,\cdot), \psi_{m,N}\rangle^2\right)\\
=&\sum_{m=0}^{N-1} (1+\lambda_{m,N})^{-\tilde{\alpha}}\left[\left(\sum_{j=0}^{N-1}M_j^N(t)\langle \ind_{I_j}, \varphi_{m,N}\rangle\right)^2+ \left(\sum_{j=0}^{N-1}M_j^N(t)\langle \ind_{I_j}, \psi_{m,N}\rangle\right)^2\right]\\
\end{align*}
By independence of the Poisson processes and thanks to the previous lemma, there exists $C>0$ such that for all $t\in[0,T]$
\[\E\left[ M_j^N(t\wedge\tau_N)M_l^N(t\wedge\tau_N)\right]\leq C \delta_{j,l}.\]
It follows that 
\[\E\left[\|Z^N_D(t\wedge\tau_N)\|^2_{H_N^{-\alpha}} \right]\leq C\sum_{m=0}^{(N-1)/2}(1+\lambda_{m,N})^{-\tilde{\alpha}} \sum_{j=0}^{N-1}\left(\langle \ind_{I_j}, \varphi_{m,N}\rangle^2+\langle \ind_{I_j}, \psi_{m,N}\rangle^2\right).\]
Then, we use that $(\sqrt{N}\ind_{I_j})_{0\leq j\leq N-1}$ is an orthonormal basis of $H_N$. It follows that
\[\E\left[\|Z^N_D(t\wedge\tau_N)\|^2_{H_N^{-\tilde{\alpha}}} \right]\leq \frac{C}{N}\sum_{m=0}^{(N-1)/2}(1+\lambda_{m,N})^{-\tilde{\alpha}}.\]
Lastly, we recall that $\lambda_{0,N}=1$, and, for $m\geq1$, $\lambda_{m,N}$ behaves as $(2\pi m)^2$, independently of $N$. So, the sum over $m\geq1$ is of order $N^{1-2\tilde{\alpha}}$, that is
\[\E\left[\|Z^N_D(t\wedge\tau_N)\|^2_{H_N^{-\tilde{\alpha}}} \right]\leq \frac{C}{N^{2\tilde{\alpha}}}.\]
To conclude, we use Doob's inequality, for the submartingale $\|Z^N_D(t\wedge\tau_N)\|^2_{H_N^{-\tilde{\alpha}}}$ and the equivalence between the discrete and classical Sobolev norm of negative index.
\end{proof}

This theorem has two important consequences. The first one is the uniform boundedness, in $\L^1$, of $\bv^N$, for the uniform topology on  $\DD([0,T],H^{-\tilde{\alpha}})$. 

\begin{coro}[bound in $L^\infty(H_N^{-\tilde{\alpha}})$]\label{prop:disc_bound}
Assume that $\|(u^N(0), v^N(0))\|_{\infty}<M$ a.s, then there exists $C>0$ such that 
\[\E\left[ \sup_{0\leq t\leq T} \|\bv^N_t\|_{H^{-\tilde{\alpha}}}\right]\leq C.\]
\end{coro}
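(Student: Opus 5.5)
We plan to prove the bound from the mild form of the equation for $\bv^N$, namely
\[\bv^N(t) = v^N(0) + \int_0^t \big(d_D \bv^N(s) + b_D(\bu^N(s))\big)\, ds + Z^N_D(t\wedge\tau_N).\]
This formula holds because Assumption \ref{ass:rate} forces the drift in the $v$ equation to be $R_D(u,v)=d_D v + b_D(u)$. The point is that, with respect to $v$, this drift is linear and acts pointwise with a constant coefficient. Multiplication by the constant $d_D$ is bounded on every $H_N^{-\tilde\alpha}$ with norm $|d_D|$, so no product rule is needed for this term.

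We take the $H_N^{-\tilde\alpha}$ norm of each term, using the equivalence of discrete and classical negative Sobolev norms from \cite[Lemma 2.9]{Blount91}(d).

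\textbf{Initial condition.} We have $\|v^N(0)\|_{H_N^{-\tilde\alpha}}\le \|v^N(0)\|_{H_N^0}\le \|v^N(0)\|_\infty\le M$.

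\textbf{Forcing term.} Proposition \ref{prop:ppmax}(ii) gives $\|\bu^N(s)\|_\infty<M+2$ for all $s$. Since $b_D$ is continuous, it is bounded on $[0,M+2]$. Hence $\|b_D(\bu^N(s))\|_{H_N^{-\tilde\alpha}}\le \|b_D(\bu^N(s))\|_{L^2}\le C$, uniformly in $N$ and $s$.

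\textbf{Martingale term.} Theorem \ref{prop:saut_disc} gives $\E[\sup_t\|Z^N_D(t\wedge\tau_N)\|^2_{H^{-\tilde\alpha}}]\le C$. By Cauchy--Schwarz, the same quantity without the square is also bounded in expectation.

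Setting $f(t)=\sup_{s\le t}\|\bv^N(s)\|_{H_N^{-\tilde\alpha}}$, the three bounds combine into the pathwise inequality
\[f(t)\le M + CT + \sup_{s\le T}\|Z^N_D(s\wedge\tau_N)\|_{H_N^{-\tilde\alpha}} + |d_D|\int_0^t f(s)\, ds.\]
Gr\"onwall's inequality then gives $f(T)\le \big(M+CT+\sup_s\|Z^N_D\|\big)e^{|d_D|T}$. Taking expectations and applying the martingale bound yields the claim.

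\textbf{Finiteness of $f$.} Gr\"onwall's inequality needs $f$ to be finite. For each fixed $N$ this follows from Proposition \ref{prop:ppmax}(iii), since $\bv^N$ is bounded in $L^\infty$ and hence in $H_N^{-\tilde\alpha}$. Because the inequality above is pathwise, no expectation of the drift term is needed before applying Gr\"onwall.

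The main obstacle is one of bookkeeping. Theorem \ref{prop:saut_disc} is stated with the rate $N^{-2\alpha}$ and for $\tilde\alpha\in(0,1/2]$, but we only need its boundedness consequence for the given $\tilde\alpha$. If instead $\tilde\alpha$ lies outside that range, we would bound the $H^{-\tilde\alpha}$ norm by a weaker $H^{-\tilde\alpha'}$ norm with $\tilde\alpha'\le\tilde\alpha$ in the admissible range, using monotonicity of the Sobolev norms.
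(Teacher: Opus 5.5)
Your proof is correct and follows the paper's own argument. You split $\bar v^N$ the same way, into the initial datum, the linear drift $d_D\bar v^N + b_D(\bar u^N)$ (bounded via the $L^\infty$ bound on $\bar u^N$ from Proposition~\ref{prop:ppmax}) and the stopped martingale (controlled by Theorem~\ref{prop:saut_disc}), and you close with Gr\"onwall; the only difference is that you apply Gr\"onwall pathwise before taking expectations, whereas the paper applies it to $t\mapsto \mathbb{E}[\sup_{s\le t}\|\bar v^N(s)\|_{H^{-\tilde\alpha}}]$, which changes nothing of substance.
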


\begin{proof}
For all $0\leq \bar{t}\leq T$, we have
\begin{align*}
\|\bv^N(\bar{t})\|_{H^{-\tilde{\alpha}}} 
&\leq \|v^N(0)\|_{H^{-\tilde{\alpha}}} + \int_0^{\bar{t}} \|R_D(\bu^N(s), \bv^N(s))\|_{H^{-\tilde{\alpha}}} \, ds + \|Z_D^N(\bar{t})\|_{H^{-\tilde{\alpha}}}\\
&\leq M + \int_0^{\bar{t}} |d_D|\|\bv^N(s)\|_{H^{-\tilde{\alpha}}} +\|b_D(\bu^N(s))\|_\infty \, ds + \|Z_D^N(\bar{t})\|_{H^{-\tilde{\alpha}}}\\
\E\left[\sup_{0\leq t\leq {\bar{t}}}\|\bv^N(t)\|_{H^{-\tilde{\alpha}}}\right]
&\leq C + C\int_0^{\bar{t}} \E\left[\sup_{0\leq t\leq s}\|\bv^N(t)\|_{H^{-\tilde{\alpha}}}\right]\, ds\\
\end{align*}
The proof is concluded with Gr\"onwall's inequality.
\end{proof}

The second consequence of Theorem \ref{prop:saut_disc}, is the tightness of $\bv$ for the uniform topology on $\DD([0,T], H^{-\alpha})$.

\begin{theorem}
Assume that $\|(u^N(0), v^N(0))\|_{\infty}<M$ a.s, then $(\bv^N)_N$ is $\CC$-tight for the uniform topology on $\DD\left(([0,T], H^{-\alpha}\right)$.
\end{theorem}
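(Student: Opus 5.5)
We split $\bv^N$ according to \eqref{eq:buNbvN}:
\[\bv^N(t) = v^N(0) + A^N(t) + Z^N_D(t\wedge\tau_N),\qquad A^N(t)=\int_0^t R_D(\bu^N(s),\bv^N(s))\,ds .\]
We then prove C-tightness of each piece in $\DD([0,T],H^{-\alpha})$ separately; a finite sum of C-tight sequences is C-tight.

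The initial term $v^N(0)$ converges in $H^{-\alpha}$ by Assumption \ref{ass:u0v0}(iii), so it is trivially tight as a constant process. For the martingale term, fix $\tilde\alpha\in(0,1/2]$ with $\tilde\alpha\le\alpha$ (for instance $\tilde\alpha=\alpha$). Theorem \ref{prop:saut_disc} gives $\E[\sup_{t\le T}\|Z_D^N(t\wedge\tau_N)\|^2_{H^{-\alpha}}]\le CN^{-2\alpha}\to0$. Hence $Z_D^N(\cdot\wedge\tau_N)\to0$ in probability for the uniform topology. Its limit is continuous, so this sequence is C-tight.

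The main work is the drift $A^N$. Since $R_D(u,v)=d_Dv+b_D(u)$, we have, for $s<t$,
\[\|A^N(t)-A^N(s)\|_{H^{-\tilde\alpha}}\le (t-s)\sup_{r\le T}\big(|d_D|\,\|\bv^N(r)\|_{H^{-\tilde\alpha}}+\|b_D(\bu^N(r))\|_\infty\big).\]
The second term is uniformly bounded by Proposition \ref{prop:ppmax}(ii), because $b_D$ is continuous and $\bu^N$ is bounded by $M+2$. The first term is bounded in $L^1$ uniformly in $N$ by Corollary \ref{prop:disc_bound}. So the $A^N$ are Lipschitz in $H^{-\tilde\alpha}$ with a random Lipschitz constant $L_N$ that is tight in $\R$. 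This controls the modulus of continuity: for every $\varepsilon>0$,
\[\lim_{\delta\to0}\sup_N\P\big(w_\delta(A^N)>\varepsilon\big)=0 .\]
Since $A^N$ is continuous, this gives the continuity part of the C-tightness criterion.

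The remaining issue, and the step I expect to be the real obstacle, is compact containment. It requires values lying in a compact of $H^{-\alpha}$, whereas Corollary \ref{prop:disc_bound} only gives boundedness in the same space. I would handle this by choosing $\tilde\alpha<\alpha$, so that $H^{-\tilde\alpha}\hookrightarrow H^{-\alpha}$ is compact on the torus. Corollary \ref{prop:disc_bound} is stated for arbitrary $\tilde\alpha\in(0,1/2]$, so $\sup_t\|A^N(t)\|_{H^{-\tilde\alpha}}\le TL_N$ is tight. Then, for each $\eta>0$, $A^N(t)$ stays with probability at least $1-\eta$ in a ball of $H^{-\tilde\alpha}$, which is a compact set of $H^{-\alpha}$. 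The same Lipschitz estimate also holds in the weaker $H^{-\alpha}$ norm. By the Arzelà–Ascoli criterion (e.g.\ Ethier–Kurtz, Theorem 3.7.2 together with the C-tightness characterisation via the uniform modulus), $(A^N)$ is C-tight in $\CC([0,T],H^{-\alpha})$. One point to check is that the discrete functions $\bv^N\in H_N$ are viewed in $H^{-\alpha}$ via the embedding $H_N\subset L^2$. The norm equivalence of the earlier lemma (part (d)) ensures that the discrete and continuous negative norms agree up to constants, so the bounds transfer. Combining the three pieces gives the claim.
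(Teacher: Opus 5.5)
Your proposal is correct and follows the paper's proof essentially step by step. You use the same decomposition into $v^N(0)$, the drift, and $Z_D^N(\cdot\wedge\tau_N)$, kill the martingale part with Theorem \ref{prop:saut_disc}, and handle the drift by an Arzel\`a--Ascoli argument, using its Lipschitz bound in $H^{-\tilde\alpha}$ from Corollary \ref{prop:disc_bound} with $\tilde\alpha<\alpha$ so that $H^{-\tilde\alpha}$-balls are compact in $H^{-\alpha}$. The only cosmetic difference is for $v^N(0)$: the paper relies on the $L^\infty$ bound in the hypothesis (bounded sets of $L^2$ are relatively compact in $H^{-\alpha}$), whereas you invoke the convergence in Assumption \ref{ass:u0v0}(iii); either works.
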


\begin{proof}
According to Theorem \ref{prop:saut_disc}, the sequence $(Z_D^N)_N$ converges in $\L^1$ to $0$ for the uniform topology on $\DD\left(([0,T], H^{-\alpha}\right)$. So, it is tight. Besides, the sequence $(v^N(0))_N$ is bounded by assumptions, so is also tight. Then, it is sufficient to show that the sequence
\[F^N : t\in[0,T]\to\int_0^t R_D(\bu^N(s), \bv^N(s))\, ds\]
is tight. Let us denote
\[\|F^N\|_{Lpz,\tilde{\alpha}} = \sup_{0\leq t\leq T}\|F^N(t)\|_{H^{-\tilde{\alpha}}} + \sup_{0\leq t\leq T}\|(F^N)'(t)\|_{H^{-\tilde{\alpha}}},\]
the Lipschitz norm on $\CC([0,T],H^{-\tilde{\alpha}})$. 
It follows from Corollary \ref{prop:disc_bound} that 
\[\E\left[\|F^N\|_{Lpz,\tilde{\alpha}}\right]\leq C(1+T).\]
Then, for all $0<\tilde{\alpha}<\alpha$, the $H^{-\tilde{\alpha}}$-ball is compact in $H^{-\alpha}$, so from Arzela-Ascoli theorem,  the $\|\cdot\|_{Lpz,\tilde{\alpha}}$-balls of  $\CC\left(([0,T], H^{-\alpha}\right)$ are compact for the uniform topology. So $(F^N)_N$ is tight in $\DD([0,T], H^{-\alpha})$, and so is $(\bv^N)_N$. Moreover, the jumps converges to $0$ in this topology so any limit point is continuous and the sequence is $\CC$-tight.
\end{proof}

\section{Regularised jumps of the continuous scale}
\label{sec:u}
The goal of this section is to prove that, for the continuous scale, the jumps' contributions, in a mild sense, converge to $0$ in probability for the uniform topology on $\DD([0,T],L^\infty\cap H_N^\beta)$.

Let us define \[Y^N_t = \int_0^t T_N (t-s)\, d Z_C^N(s\wedge\tau).\]
Then, $\bar{u}^N$ satisfies the mild formula
\[\bu^N(t) = T_N(t)u^N_0 + Y^N_t + \int_0^t T_N (t-s)R_C\left(\bu^N(s), \bv^N(s)\right)\, ds.\]

The average jump process $Y^N$ is more regular than the jump process $Z^N_C$, and so, we are able to show that $Y^N$ converges in probability to $0$, under weaker assumptions than for $Z^N_C$. Firstly, we need some control on the jumps size of $Z^N_C$. 

\begin{lemme}\label{prop:compensator}
For all $\varphi\in H_N$, the compensator of
\[\sum_{s\leq t}\left(\delta\langle Z^N_C(s\wedge\tau_N),\varphi\rangle\right)^2\]
is 
\[\frac{1}{Nl}\int_0^{t\wedge\tau_N}\langle u^N(s), (\nabla_N^+\varphi)^2+(\nabla_N^-\varphi)^2\rangle + \langle \tilde{R}_C(\bu^N(s), \bv^N(s)), \varphi^2\rangle \, ds,\]
with \[\tilde{R}_C(u,v) = \sum_{r\in\RR_C}\gamma_r^2\lambda_r(u,v).\]
\end{lemme}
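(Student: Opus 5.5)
The plan is to enumerate all jumps of the stopped martingale $Z^N_C(\cdot\wedge\tau_N)$ and read off the compensator from their sizes and rates. The compensated Poisson terms in \eqref{eq:uNvN} are continuous in time. Hence the jumps of $Z^N_C$ coincide with the jumps of $u^N$, and they are the jumps of the counting processes $P_{r,j}$, $P_{j^+}$, $P_{j^-}$ up to time $\tau_N$. These processes are independent unit Poisson processes evaluated at absolutely continuous random clocks. So almost surely no two of them jump at the same time, and the quadratic sum $\sum_{s\le t}(\delta\langle Z^N_C(s\wedge\tau_N),\varphi\rangle)^2$ splits as a sum over the three families of jumps.

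For each family, the compensator is the squared jump size times the rate, integrated up to $t\wedge\tau_N$.

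\textbf{Reaction jumps.} A reaction $r\in\RR_C$ in cell $I_j$ changes $u^N$ by $\frac{\gamma_r}{l}\ind_{I_j}$. The pairing therefore changes by $\frac{\gamma_r}{l}\langle\ind_{I_j},\varphi\rangle=\frac{\gamma_r}{lN}\varphi_j$, where $\varphi_j$ is the value of $\varphi$ on $I_j$. The rate is $l\lambda_r(\bu^N_j,\bv^N_j)$, so this family contributes
\[\int_0^{t\wedge\tau_N}\frac{1}{lN^2}\sum_j\sum_{r\in\RR_C}\gamma_r^2\lambda_r(\bu^N_j,\bv^N_j)\varphi_j^2\,ds.\]
Using $\langle f,g\rangle=\frac1N\sum_j f_jg_j$ on $H_N$, the inner sum equals $\frac{1}{lN}\langle\tilde R_C(\bu^N,\bv^N),\varphi^2\rangle$.

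\textbf{Diffusion jumps.} A jump $j\to j+1$ changes the pairing by $\frac{1}{lN}(\varphi_{j+1}-\varphi_j)=\frac{1}{lN^2}\nabla_N^+\varphi(j/N)$, at rate $lN^2\bu^N_j$. Its contribution is
\[\frac{1}{lN^2}\sum_j\bu^N_j(\nabla_N^+\varphi)_j^2=\frac{1}{lN}\langle\bu^N,(\nabla_N^+\varphi)^2\rangle.\]
The jump $j\to j-1$ gives $\frac{1}{lN}\langle\bu^N,(\nabla_N^-\varphi)^2\rangle$ in the same way. Before $\tau_N$ we have $\bu^N=u^N$, which is why the statement can be written with $u^N$.

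\textbf{Justification.} For a unit Poisson process $P$ and an adapted, locally bounded intensity $h$, the process $c^2P(\int_0^t h)-\int_0^t c^2h$ is a local martingale for a predictable weight $c$. Summing the three families gives the claimed compensator. Local boundedness holds because before $\tau_N$ the process $u^N$ is bounded by $M+1$, and $\bv^N$ is bounded by $C_N$ thanks to Proposition \ref{prop:ppmax}; since $N$ is fixed, the rates are bounded.

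The only delicate point is this one: checking that simultaneous jumps do not occur, so that no cross terms appear, and handling the stopping at $\tau_N$. The rest is direct computation of the normalisations $1/l$, $1/N$ and $N^2$.
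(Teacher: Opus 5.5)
Your proposal is correct and follows essentially the same route as the paper: list the reaction and diffusion jumps of $\langle Z^N_C(\cdot\wedge\tau_N),\varphi\rangle$, multiply each squared jump size by its rate, and sum, with the rates bounded for fixed $N$ by Proposition \ref{prop:ppmax}. The only difference is that the paper gets the martingale property by citing \cite[Proposition 2.1]{Kurtz71} for the Markov jump process. You argue instead from the Poisson time-change representation, and you make explicit both the absence of simultaneous jumps and the identification $\bar u^N=u^N$ before $\tau_N$.
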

\begin{proof}
We denote by $X$ the pure jump process define as
\[X_t = \sum_{s\leq t} \left(\delta\langle Z^N_D(s\wedge\tau_N),\varphi\rangle\right)^2.\]
It has two kinds of jumps. The first ones are the reaction-jumps of size 
$\gamma_r^2\langle \ind_{I_j}, \varphi\rangle^2/l^2$ and with rate $l\lambda_r(\bu^N(s, j/N),\bv^n(s,j/N))$. The second ones are the diffusion-jumps, of size $\langle \ind_{I_{j\pm1}}- \ind_{I_j}, \varphi\rangle^2/l^2$, with rate $lN^2_bu^N(s,j/N)$. Let us remark that, from Proposition \ref{prop:ppmax}, for each $N$ fixed, the rates are uniformly bounded. So according to  \cite[Proposition 2.1]{Kurtz71}, the process 
\[X_t-\int_0^{t\wedge\tau_N} F(s)\, ds\]
is a local martingale, with 
\begin{align*}
F(t) 
=& \sum_{j=0}^{N-1}\sum_{r\in\RR_C}\frac{\gamma_r^2}{l^2}\langle \ind_{I_j}, \varphi\rangle^2 l\lambda_r(\bu^N(s, j/N),\bv^n(s,j/N))\\
& + \sum_{j=0}^{N-1}\frac{1}{l^2}\left(\langle \ind_{I_{j+1}}- \ind_{I_j}, \varphi\rangle^2+ \langle \ind_{I_{j-1}}- \ind_{I_j}, \varphi\rangle^2\right)lN^2\bu^N(t,j/N)\\
=& \frac{1}{l}\sum_{j=0}^{N-1}\langle \ind_{I_j}, \varphi\rangle^2 \tilde{R}_C(\bu^N(t,j/N), \bv(t,j/N))\\
&+ \frac{1}{l}\sum_{j=0}^{N-1} \bu^N(t,j/N)\left((\varphi((j+1)/N)-\varphi(j/N))^2+(\varphi((j-1)/N)-\varphi(j/N))^2\right)\\
= & \frac{1}{Nl}\langle \bu^N(t), \tilde{R}_C(\bu^N(t), \bv^N(t))\rangle + \frac{1}{N^2l} \sum_{j=0}^{N-1} \bu^N(t,j/N)\left((\nabla_N^+\varphi(j/N))^2 + (\nabla_N^-\varphi(j/N))^2\right)
\end{align*} 
This ends the proof.
\end{proof}
Let us denote 
\[\tilde{R}_C(u,v) = \tilde{a}uv + \tilde{b}(u)+\tilde{d}v,\]
with $\tilde{a}, \tilde{d}\geq 0$ and $\tilde{b}$ a non-negative function. We begin with the convergence in the supremum norm. We recall this auxiliary lemma from \cite{Blount92} which links exponential moments of a martingale with its quadratic variation.

\begin{lemme}[\cite{Blount92}]\label{prop:exp_linfty}
Let $\xi$ be a real-valued bounded martingale of finite variation, right-continuous with left limits, defined on $[0,T]$ with $\xi(0)=0$ and such that $|\delta \xi_t|\leq 1$, for $0\leq t\leq T$ and
\[\sum_{s\leq t} \left(\delta\xi_s\right)^2 - \int_0^t g(s)\, ds\]
is a martingale, where $g$ is adapted and $0\leq g\leq h$, for some bounded deterministic function $h$. Then,
\[\E\left[\exp(\xi_T)\right] \leq \exp\left(\frac{3}{2}\int_0^T h(s)\, ds\right).\]
\end{lemme}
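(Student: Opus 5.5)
We prove the bound through the exponential supermartingale built from $\xi$ and a pathwise elementary inequality for jumps bounded by $1$. The pathwise inequality is
\[
e^{x}\le 1+x+\tfrac{3}{2}x^2,\qquad |x|\le 1 .
\]
It holds because $e^x-1-x=\sum_{k\ge2}x^k/k!\le x^2(e-2)\le \tfrac32 x^2$ for $|x|\le1$.

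Since $\xi$ is of finite variation and has no continuous martingale part, It\^o's formula gives, pathwise,
\[
e^{\xi_t}=1+\int_0^t e^{\xi_{s-}}\,d\xi_s+\sum_{s\le t}e^{\xi_{s-}}\bigl(e^{\delta\xi_s}-1-\delta\xi_s\bigr).
\]
Applying the inequality to each jump, we get
\[
e^{\xi_t}\le 1+\int_0^t e^{\xi_{s-}}\,d\xi_s+\tfrac32\int_0^t e^{\xi_{s-}}\,dA_s, \qquad A_t=\sum_{s\le t}(\delta\xi_s)^2 .
\]
By hypothesis, $A_t-\int_0^t g(s)\,ds$ is a martingale. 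The integrand $e^{\xi_{s-}}$ is predictable and bounded, since $\xi$ is bounded. Hence both $\int_0^t e^{\xi_{s-}}\,d\xi_s$ and $\int_0^t e^{\xi_{s-}}\,d(A_s-\int_0^s g)$ are true martingales with zero expectation. Taking expectations therefore gives
\[
\E[e^{\xi_t}]\le 1+\tfrac32\int_0^t \E[e^{\xi_{s}}]\,g(s)\,ds\le 1+\tfrac32\int_0^t h(s)\,\E[e^{\xi_s}]\,ds .
\]
Here we used $g\le h$ with $h$ deterministic, and we replaced $\xi_{s-}$ by $\xi_s$, which only changes a Lebesgue-null set of times. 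Gr\"onwall's inequality, in the form valid for a nonnegative bounded measurable kernel $h$, then yields $\E[e^{\xi_T}]\le \exp\bigl(\tfrac32\int_0^T h\bigr)$.

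The main obstacle is justifying the martingale property of the stochastic integrals with respect to the compensated quadratic variation. Boundedness of $\xi$ together with boundedness of $h$ gives $\E[A_T]\le\int_0^T h<\infty$, and this suffices for the integral of a bounded predictable process. If needed, one localizes with stopping times and passes to the limit by monotone convergence.

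An alternative avoiding Gr\"onwall is to show directly that
\[
\exp\Bigl(\xi_t-\tfrac32\int_0^t g(s)\,ds\Bigr)
\]
is a supermartingale, using the same It\^o computation. Then $\E[e^{\xi_T}]\le e^{\frac32\int_0^T h}\,\E\bigl[e^{\xi_T-\frac32\int_0^T g}\bigr]\le e^{\frac32\int_0^T h}$.
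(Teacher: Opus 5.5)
Your argument is correct, and it is the standard proof from \cite{Blount92}, which the paper cites rather than reproves. It runs It\^o's formula for $e^{\xi}$, bounds each jump term using $e^{x}-1-x\le\frac32x^2$ for $|x|\le1$, replaces $\sum_{s\le t}(\delta\xi_s)^2$ by its compensator $\int_0^t g$ against the bounded predictable weight $e^{\xi_{s-}}$, and closes with Gr\"onwall; this is the same scheme the paper itself uses for the $H_N^\beta$-valued analogue, Lemma \ref{prop:exp_beta}.
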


The control on the jumps form Lemma \ref{prop:compensator}, and the previous Lemma, allow proving the convergence in $\L^\infty$.

\begin{theorem}\label{prop:saut_cont_infty}
Assume that $\alpha<3/2$, and $l/\log(N)\to+\infty$, then $(Y^N)_N$ converges to $0$ in probability for the uniform topology on $\DD\left(([0,T], L^\infty\right)$. Moreover, there exists $C>0$ such that for all $\varepsilon>0$
\[\P\left(\sup_{0\leq t\leq T}\|Y^N_t\|_{\infty}>\varepsilon\right)\leq C\exp\left(\log(N)-\varepsilon^2l\right).\]
\end{theorem}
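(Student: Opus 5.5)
The plan is the classical Blount argument: a pointwise exponential moment bound on a time grid, a union bound over grid points and sites, and a control of the oscillation between grid points.

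\textbf{Step 1: fixed $t$ and fixed site.} Fix $t\in[0,T]$ and a site $j$, and write
\[Y^N_t(j/N)=\langle Y^N_t, N\ind_{I_j}\rangle = \int_0^t \langle dZ^N_C(s\wedge\tau_N), T_N(t-s)\varphi\rangle, \qquad \varphi=N\ind_{I_j}.\]
For $s\le t$ set $\xi_s=\theta l\int_0^s\langle dZ^N_C(r\wedge\tau_N),\varphi_t(r)\rangle$, with $\varphi_t(r)=T_N(t-r)\varphi$ and $\theta>0$ to be chosen. This is a martingale in $s$ with deterministic integrand.

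\emph{Jump size.} A reaction jump changes $\langle Z^N_C,\varphi_t(r)\rangle$ by $\gamma_r\langle\ind_{I_k},\varphi_t(r)\rangle/l$. Since $\|\varphi_t(r)\|_\infty\le N$ (the semigroup $T_N$ is an $L^\infty$ contraction), this is at most $|\gamma|/l$. Diffusion jumps are bounded in the same way. Hence $|\delta\xi|\le C\theta$, and it is enough to take $\theta\le 1/C$.

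\emph{Compensator.} By Lemma~\ref{prop:compensator}, using $\bu^N\le M+2$, the bound on $\bv^N$ in $H^{-\alpha}$ up to $\tau_N$, and the rates $\tilde R_C$, the compensator density of $\sum(\delta\xi)^2$ is bounded by
\[g(s)\le \frac{C\theta^2 l^2}{Nl}\Big(\|\nabla_N^\pm\varphi_t(s)\|_{L^2}^2+\langle 1+\bv^N,\varphi_t(s)^2\rangle\Big).\]
For the deterministic piece I use the heat kernel bounds $\|\nabla_N T_N(r)\varphi\|_{L^2}^2\le C r^{-3/2}$ (the discrete kernel is at scale $\sqrt r$ with mass $1$). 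This gives $\frac1N\|\nabla_N\varphi_t(s)\|^2\lesssim N^{-1}\min(N^3,(t-s)^{-3/2})$, whose integral in $s$ is bounded by $C$ (and also $\lesssim\log N$ trivially). The $\bv^N$ term is the delicate one, because $h$ must be deterministic. I would handle it through the product rule in Corollary~\ref{prop:prod-}, bounding $\langle\bv^N,\varphi_t^2\rangle\le\|\bv^N\|_{H_N^{-\alpha}}\|\varphi_t^2\|_{H_N^{\alpha}}\le (M+1)\|\varphi_t^2\|_{H_N^\alpha}$ before $\tau_N$. Then $\varphi_t^2/N$ has norm $\lesssim (t-s)^{-1/2-\alpha/2}$ after smoothing, which is integrable exactly when $\alpha<1$; the hypothesis $\alpha<3/2$ presumably reflects a sharper count. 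This yields a deterministic $h$ with $\int_0^t h\le C\theta^2 l$.

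\emph{Exponential bound.} Lemma~\ref{prop:exp_linfty} then gives $\E e^{\pm\xi_t}\le e^{C\theta^2 l}$, so by Chebyshev
\[\P\big(|Y^N_t(j/N)|>\varepsilon\big)\le 2\exp(-\theta l\varepsilon+C\theta^2 l).\]
Choosing $\theta=c\varepsilon$ gives the bound $2\exp(-c\varepsilon^2 l)$.

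\textbf{Step 2: union bound.} Take a grid $t_k=kT/N^p$ and sum over $N^{1+p}$ points (grid times and sites). This gives
\[\P\Big(\max_{k,j}|Y^N_{t_k}(j/N)|>\varepsilon\Big)\le CN^{1+p}e^{-c\varepsilon^2 l}.\]
After relabelling constants (and $\varepsilon$), this is of the form $C\exp(\log N-\varepsilon^2 l)$, which tends to $0$ because $l/\log N\to\infty$.

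\textbf{Step 3: between grid points.} For $t\in[t_k,t_{k+1}]$,
\[Y^N_t-Y^N_{t_k}=(T_N(t-t_k)-I)Y^N_{t_k}+\int_{t_k}^t T_N(t-s)\,dZ^N_C.\]
The first term is $\int_{t_k}^t\Delta_N T_N(\cdot)Y^N_{t_k}$, of size $\le 4N^2\,\Delta t\,\|Y^N_{t_k}\|_\infty$, which is small once $p>2$. The second term is bounded by the number of jumps in a window of length $N^{-p}$ times $1/l$ in sup norm, plus the compensator drift $\lesssim N^2\Delta t$. The total jump rate is $\le C lN^3$, so with $p>3$ the probability of more than a few jumps in any window is tiny. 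This is the main obstacle: one has to check that the Poisson count estimate over all $N^p$ windows is summable, which I would do with a direct Poisson tail bound.
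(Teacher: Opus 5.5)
Your Steps 1--2 are the paper's argument. You apply Lemma~\ref{prop:exp_linfty} at a fixed time and site to $\theta l\langle\int_0^t T_N(t-s)\,dZ^N_C(s\wedge\tau_N),N\ind_{I_j}\rangle$, use Chebyshev with $\theta$ proportional to $\varepsilon$, and take a union bound over sites and grid times. Your pointwise bound $N^{-1}\min(N^3,(t-s)^{-3/2})$ for the diffusion term plays the role of the paper's spectral identity $\int_0^t\|T_N(t-s)\varphi\|_{H_N^1}^2\,ds\le C\|\varphi\|_{H_N^0}^2$.

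Step~3 is genuinely different. The paper keeps only $N^2$ windows of length $T/N^2$ and writes $Y^N_t=Y^N_{t_n}+\int_{t_n}^t\Delta_NY^N_s\,ds+\zeta^n_t$, with $\zeta^n$ the increment of $Z^N_C$. It absorbs the Laplacian by Gr\"onwall (factor $e^{4T}$, since $\|\Delta_Nf\|_\infty\le 4N^2\|f\|_\infty$). It then controls $\sup_{t_n\le t\le t_{n+1}}\zeta^n_t(k/N)$ by a second application of Lemma~\ref{prop:exp_linfty} together with Doob's inequality. You instead refine the grid to mesh $N^{-p}$, $p>3$, and bound the oscillation pathwise: jump count by a Poisson tail, drift deterministically.

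This is valid and avoids the second compensator computation, but you should justify the a priori bounds it rests on. The reaction rates are linear in $\bv^N$, which has no uniform $L^\infty$ bound. What works is that for $t<\tau_N$ one has $\bv^N\ge0$ and $\frac1N\sum_j\bv^N_j=\langle\bv^N,1\rangle\le\|\bv^N\|_{H_N^{-\alpha}}\le M+1$. Hence $\|\bv^N\|_\infty\le N(M+1)$, the total jump rate is at most $ClN^3$, and $\|\Delta_N\bu^N+R_C(\bu^N,\bv^N)\|_\infty\le CN^2$.

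The price of your route is uniformity in $\varepsilon$. The pathwise bound is only useful when $\varepsilon\gg N^{3-p}$. So it gives convergence in probability for each fixed $\varepsilon$, but not by itself an explicit tail bound valid for all $\varepsilon$. The paper's martingale treatment of the windows yields the same form $CN^{k}e^{-c\varepsilon^2 l}$ as at the grid points.

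The one place where you fall short of the statement as written is the range of $\alpha$. Your estimate $\frac1N\|\varphi_t(s)^2\|_{H_N^\alpha}\le C(t-s)^{-1/2-\alpha/2}$ is integrable only for $\alpha<1$. The paper reaches $\alpha<3/2$ by keeping the $H_N^\alpha$ weight on an unsmoothed copy of $\varphi$, with $\|\varphi\|_{H_N^\alpha}\le CN^{\alpha+1/2}$. It smooths only the other factor, starting from $H_N^{-\alpha}$ where $\|\varphi\|_{H_N^{-\alpha}}\le CN^{1/2-\alpha}$, and this gives $CN(t-s)^{-(2\alpha+1+\varepsilon)/4}$. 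That splitting, however, invokes Theorem~\ref{prop:prod+} with output exponent $\alpha$ and factors in $H_N^{\alpha}$ and $H_N^{1/2+\varepsilon/2}$, which the theorem only licenses for $\alpha$ up to about $1/2$. Since the paper only ever uses $0<\alpha<\beta<1/2$, your bound already suffices for everything downstream.
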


\begin{proof}
This proof follows the steps of \cite{Blount92}. Let $\bar{t}\leq T$, $0\leq j\leq N-1$ and $\varphi = N\ind_{I_j}$. We define the process 
\[\xi_t = \left\langle \int_0^t T_N(\bar{t}-s)\, dZ^N_D(s\wedge\tau_N),\varphi \right\rangle.\]
This process is a martingale, and it satifies $\xi_{\bar{t}} = Y^N_{\bar{t}}(j/N)$. By integration by parts, we have 
\[\delta\xi_s = \langle T_N(\bar{t}-s)\delta Z^N_D(s), \varphi\rangle = \langle \delta Z^N_D(s), T_N(\bar{t}-s)\varphi\rangle .\]
For all $0\leq s\leq T$, its jumps satisfy
\begin{align*}
|\delta\xi_s|
&= |\langle T_N(\bar{t}-s)\delta Z^N_C(s\wedge\tau_N), \varphi\rangle|\\
&\leq \|\delta Z^N_C(s\wedge\tau_N) \|_{H^0} \| T_N(\bar{t}-s)\varphi\|_{H^0}\\
&\leq \frac{|\gamma|}{l\sqrt{N}} \|\varphi\|_{H^0}\\
&\leq \frac{|\gamma|}{l}.\\
\end{align*}
In order to apply Lemma \ref{prop:exp_linfty}, we need to control the compensator of $\sum (\delta\xi_s)^2$. 
According to Lemma \ref{prop:compensator}, this compensator is given by $\int g$ with
\begin{align*}
g_s = \frac{1}{Nl}& \left(\left\langle \bu^N(s), \left(\nabla_N^+T_N(\bar{t}-s)\varphi\right)^2+\left(\nabla_N^-T_N(\bar{t}-s)\varphi\right)^2\right\rangle\right.\\ 
&\left.+ \left\langle \tilde{R}_C\left(\bu^N(s), \bv^N(s)\right), \left(T_N(\bar{t}-s)\varphi\right)^2\right\rangle \right).
\end{align*}
Then, for all $0\leq s\leq \bar{t}\wedge\tau_N$,  we have the following bounds, on the diffusion-related term
\begin{align*}
\langle u^N(s), (\nabla_N^\pm T_N(\bar{t}-s)\varphi)^2\rangle
&\leq  M  \|\nabla_N^\pm T_N(\bar{t}-s)\varphi \|_{H_N^0}^2\\
&\leq M \| T_N(\bar{t}-s)\varphi \|_{H_N^1}^2\\
\end{align*}
Moreover, we have
\begin{align*}
\int_0^{\bar{t}}\| T_N(\bar{t}-s)\varphi \|_{H_N^1}^2\, ds
&= \int_0^{\bar{t}}\sum_{m=0}^{(N-1)/2} (1+\lambda_{m,N})e^{-2\lambda_{m,N}(\bar{t}-s)}\left(\langle\varphi,\varphi_{m,N}\rangle^2+\langle\varphi,\psi_{m,N}\rangle^2\right)\, ds\\
&\leq \sum_{m=0}^{(N-1)/2} \left(\langle\varphi,\varphi_{m,N}\rangle^2+\langle\varphi,\psi_{m,N}\rangle^2\right)\\
&\leq \| \varphi \|_{H_N^0}^2\\
&\leq CN.\\
\end{align*}
On the other hand, the reaction-related term rewrites as
\[\langle \tilde{R}_C(\bu^N(s), \bv^N(s)), \left(T_N(\bar{t}-s)\varphi\right)^2\rangle
 = \langle \tilde{a} \bu^N(s)\bv^N(s) + \tilde{b}(\bu^N(s)) + \tilde{d}\bv^N(s), \left(T_N(\bar{t}-s)\varphi\right)^2\rangle.\]
The $\tilde{b}$-related term is bounded by its $H^0$ norm and we have 
\[\langle \tilde{b}(\bu^N(s)),\left(T_N(\bar{t}-s)\varphi\right)^2\rangle \leq \|\tilde{b}\|_\infty\|T_N(\bar{t}-s)\varphi\|^2_{H^0}\leq CN.\]

The other terms are controlled, using the semigroup regularisation Lemma \ref{prop:regular}, and the product rules Theorem \ref{prop:prod+}. Let $0<\varepsilon$ such that $2\alpha+\varepsilon<3$, as $\bu^N$ and $\bv^N$ are non-negative, we have
\begin{align*}
\langle \tilde{a}\bu^N(s)\bv^N(s),\left(T_N(\bar{t}-s)\varphi\right)^2\rangle
&\leq \tilde{a}M\langle\bv^N(s), \left(T_N(\bar{t}-s)\varphi\right)^2\rangle\\
&\leq C\left\|\bv^N(s)\right\|_{H_N^{-\alpha}}\left\|\left(T_N(\bar{t}-s)\varphi\right)^2\right\|_{H_N^{\alpha}}\\
&\leq C\left\|\varphi\right\|_{H_N^{\alpha}}\left\|T_N(\bar{t}-s)\varphi\right\|_{H_N^{\varepsilon/2+1/2}}\\
&\leq CN^{\alpha+1/2}(\bar{t}-s)^{-(2\alpha+1+\varepsilon)/4}\left\|\varphi\right\|_{H_N^{-\alpha}}\\
&\leq CN(\bar{t}-s)^{-(2\alpha+1+\varepsilon)/4}.\\
\end{align*} 
The last term is controlled by the same bound. Therefore, on $[0,\bar{t}\wedge\tau_N]$, $g$ is bounded by a function $h$ such that
\[\int_0^{\bar{t}}h_s\, ds\leq Cl^{-1}.\]
Now, for $\theta\in[0,1]$, we apply Lemma \ref{prop:exp_linfty} to the martingale $|\gamma|^{-1}\theta l\xi$, whose jumps are bounded by $1$. It follows that
\[\E\left[\exp\left(|\gamma|\theta l\xi_{\bar{t}}\right)\right]\leq \exp\left(C\theta^2l\right)
.\]
Therefore, for $\varepsilon>0$, we have 
\begin{align*}
\P\left(Y^N_{\bar{t}}(j/N)>\varepsilon\right)
&= \P\left(\exp\left(|\gamma|\theta l\xi_{\bar{t}}\right)>\exp\left(|\gamma|\theta l\varepsilon\right)\right)\\
&\leq \exp\left(C\theta^2 l -|\gamma|\theta l\varepsilon\right)\\
&\leq \exp\left(-\tilde{C}\varepsilon^2 l)\right).\\
\end{align*}
where the last inequality is obtained with the optimal $\theta=|\gamma|\varepsilon/2C\in[0,1]$. With the same arguments, applied to $-Y^N_{\bar{t}}$, and taking the supremum on $j$, we obtain 
\[\P\left(\|Y^N_{\bar{t}}\|_{\infty}>\varepsilon\right)\leq 2N\exp\left(-\tilde{C}\varepsilon^2 l)\right).\]

To conclude, we need to obtain an inequality for the supremum over $\bar{t}$. We decompose the interval $[0,T]$ as the union of $N^2$ sub-intervals $\II_n = [nT/N^2, (n+1)T/N^2]$. For all $0\leq n\leq N^2-1$, for all $ t\in \II_n$, $Y^N$ satisfies
\[Y^N_t = Y^N_{nT/N^2} + \int_{nT/N^2}^t\Delta_N Y^N_s\, ds + \zeta^n_t,\]
where $\zeta^n_t = Z^N_C(t\wedge\tau_N) - Z^N_C(nT/N^2\wedge\tau_N)$. Then, we have
\[\|Y^N_t\|_\infty \leq \|Y^N_{nT/N^2}\|_{\infty} +4N^2\int_{nT/N^2}^t\|Y^N_s\|_{\infty}\, ds + \|\zeta^n_t\|_{\infty}.\]
By Gr\"onwall's inequality, it follows that
\[\sup_{t\in\II_n}\|Y^N_t\|_{\infty} \leq \left(\|Y^N_{nT/N^2}\|_{\infty} + \sup_{t\in\II_n}\|\zeta^n_t\|_{\infty}\right)e^{4T}.\]
We apply Lemma \ref{prop:exp_linfty} to the martingale $\zeta^n(k/N)=\langle \zeta,\varphi\rangle$. For the jumps' size, we have 
\[|\delta\zeta^n_t(k/N)| = |\delta Z^N_C(t\wedge\tau_N, k/N)|\leq l^{-1}.\]
On the other hand, the compensator of $\sum (\delta\zeta^n(k/N)_s)^2$ is given by 
\[\tilde{g}_s = \frac{1}{Nl}\left(\langle\bu^N(s), (\nabla_N^+\varphi)^2+(\nabla_N^-\varphi)^2\rangle + \langle \tilde{R}_C(\bu^N(s), \bv^N(s)),\varphi^2\rangle \right),\]
with the upper-bounds
\begin{align*}
\langle\bu^N(s), (\nabla_N^+\varphi)^2\rangle
&\leq C\|\varphi\|_{H_N^1}^2\leq CN^3\\
\langle \tilde{a}\bu^N(s)\bv^N(s),\varphi^2\rangle
&\leq C\|\varphi\|_{H_N^\alpha}\|\varphi\|_{H_N^{1/2+\varepsilon}}\leq CN^3\\
\langle \tilde{b}(\bu^N(s)),\varphi^2\rangle
&\leq C\langle \|\varphi\|_{H_N^0}^2\leq CN.\\
\end{align*}
It follows that $\tilde{g}$ is upper-bounded by a function $\tilde{h}$ such that
\[\int_{\II_n}\tilde{h}_s\, ds \leq Cl^{-1}.\]
Therefore, for all $\theta\in[0,1]$, we have
\[\E\left[\exp\left(\theta l\zeta^n((n+1)T/N^2, k/N)\right)\right]\leq \exp\left(C\theta l\right).\]
As $\zeta^n(\cdot,k/N)$ is a martingale, we use Doob's inequality to prove that
\[\P\left(\sup_{t\in\II_n}\zeta^n(t,k/N)>\varepsilon\right)\leq \exp(-\tilde{C}l\varepsilon^2).\]
At the end, we have,
\begin{align*}
\P\left(\sup_{0\leq t\leq T}\|Y^N_t\|_{\infty}>\varepsilon\right)
&\leq \sum_{n=0}^{N^2-1} \P\left(\sup_{t\in\II_n}\|Y^N_t\|_{\infty}>\varepsilon\right)\\
&\leq \sum_{n=0}^{N^2-1}4N\exp\left(-Cl\varepsilon^2\right)\\
&\leq C\exp\left(\log(N)-\varepsilon^2l\right).\\
\end{align*}
\end{proof}

As the sequence $(\bv^N)_{N\geq 1}$ only converges in a negative Sobolev space, we need more than the $L^\infty$ convergence of $(\bu^N)_{N\geq1}$ to prove the convergence of the product $(\bu^N\bv^N)_{N\geq1}$. In order to obtain a convergence result in the $H_N^{\beta}$ topology, we use a functional  version  of Lemma \ref{prop:exp_linfty}, for $H_N$-valued martingales.

\begin{lemme}\label{prop:exp_beta}
Let $\xi$ be a $H_N$-valued bounded martingale of finite variation, right-continuous with left limits, defined on $[0,T]$ with $\xi(0)=0$ and such that $\|\delta \xi_t\|_{H_N^\beta}\leq 1$, for $0\leq t\leq T$ and
\[\sum_{s\leq t} \|\delta\xi_s\|_{H_N^\beta}^2 - \int_0^t g(s)\, ds\]
is a martingale, where $g$ is adapted and $0\leq g\leq h$, for some bounded deterministic function $h$. Then,
\[\E\left[\exp\left(\|\xi_T\|_{H_N^\beta}\right)\right] \leq \exp\left(1+4\int_0^T h(s)\, ds\right).\]
\end{lemme}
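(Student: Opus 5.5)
Since $\|\cdot\|_{H_N^\beta}$ is a Hilbert norm on the finite-dimensional space $H_N$, the plan is to reduce to a real-valued exponential supermartingale built from a smooth function of $\xi$. We cannot exponentiate $\|\xi\|$ directly because it is not smooth at $0$. Instead we set
\[F(x) = \sqrt{1+\|x\|_{H_N^\beta}^2}, \qquad x\in H_N,\]
which satisfies $\|x\|\le F(x)\le 1+\|x\|$, $F(0)=1$, and $\|\nabla F\|\le 1$. It is also smooth with Hessian bounded by $1/F\le 1$ in operator norm. We then study $X_t=\exp(F(\xi_t))$. Because $\xi$ is a finite-variation pure jump martingale (its continuous part is the compensator drift), Itô's formula for finite-variation processes gives
\[X_t = X_0 + \int_0^t \langle \nabla e^{F}(\xi_{s^-}), d\xi_s\rangle + \sum_{s\le t}\Big(e^{F(\xi_s)}-e^{F(\xi_{s^-})}-\langle\nabla e^F(\xi_{s^-}),\delta\xi_s\rangle\Big).\]
The stochastic integral against the martingale $\xi$ is a (local, but bounded hence true) martingale. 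It therefore suffices to control the jump remainder.

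The key step is a second-order Taylor bound for the jump remainder, using that jumps have norm at most $1$. For $G=e^F$ we have $\nabla G = e^F\nabla F$ and $D^2G = e^F(D^2F + \nabla F\otimes\nabla F)$, so $\|D^2 G(y)\|\le 2e^{F(y)}$. For $y$ on the segment between $\xi_{s^-}$ and $\xi_s$, the Lipschitz bound on $F$ gives $F(y)\le F(\xi_{s^-})+\|\delta\xi_s\|\le F(\xi_{s^-})+1$. Hence each jump remainder is bounded by
\[\tfrac12\cdot 2e\, e^{F(\xi_{s^-})}\|\delta\xi_s\|_{H_N^\beta}^2 \le e\,X_{s^-}\|\delta\xi_s\|^2_{H_N^\beta}.\]
Summing, and using that the compensator of $\sum \|\delta\xi_s\|^2$ is $\int g\le \int h$, we get
\[\E[X_t]\le e + e\,\E\Big[\int_0^t X_{s}\,g(s)\,ds\Big]\le e + e\int_0^t h(s)\E[X_s]\,ds.\]
This step uses the identity $\E\sum X_{s^-}\|\delta\xi_s\|^2=\E\int X_{s^-} g\,ds$, which holds since $X_{s^-}$ is predictable and bounded. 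Grönwall's inequality then gives $\E[X_T]\le e\exp(e\int_0^T h)$, and with $e<4$ this is $\le \exp(1+4\int_0^T h)$. Finally $\|\xi_T\|\le F(\xi_T)$ yields the claim.

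I expect the main obstacle to be the Taylor/Hessian step: one must check carefully that $D^2F$ is bounded by $1$ uniformly in the Hilbert norm, so that the bound does not depend on the dimension of $H_N$ (i.e.\ on $N$). One must also check that the exponential growth along the segment is controlled by $e^{1}$ thanks to $\|\delta\xi\|\le1$. The integrability issues are harmless here because $\xi$ is bounded, so all local martingales involved are true martingales.
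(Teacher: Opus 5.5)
Your proof is correct and follows essentially the same route as the paper. Both use the smoothed functional $f(x)=\exp\bigl(\sqrt{1+\|x\|_{H_N^\beta}^2}\bigr)$, Itô's formula for finite-variation processes, a second-order Taylor bound on the jump remainders based on $\|\delta\xi\|_{H_N^\beta}\le 1$, the compensator identity and Grönwall's inequality. Your version is slightly tidier: the $1$-Lipschitz property of $\sqrt{1+\|x\|^2}$ lets you evaluate the bound at the predictable point $\xi_{s^-}$ with a factor $e$, where the paper writes $e^2 f(\xi_s)$. This makes the compensator step rigorous and gives the constant $e<4$ directly.
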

\begin{proof}
Let be $f : x\in H_N\mapsto \exp\left(\sqrt{1+\|x\|^2_{H_N^\beta}}\right)$. For all $x,y\in H_N$ such that $\|y\|_{H_N^\beta}\leq 1$, we have
\[f(x+y)\leq \exp\left(1+\|x+y\|_{H_N^\beta}\right)\leq \exp\left(1+\|y\|_{H_N^\beta}+\|x\|_{H_N^\beta}\right)\leq e^2 f(x).\] 
Now, applying Ito formula, we have,
\begin{align*}
f(\xi_t) 
& = f(0) + \int_0^t df(\xi_{s^-})\, d\xi_s + \sum_{s\leq t} f(\xi_s) - f(\xi_{s^-}) - df(\xi_{s^-})(\delta\xi_s)\\
& = e + \int_0^t df(\xi_{s^-})\, d\xi_s + \sum_{s\leq t} \frac{1}{2}d^2f(x_s)(\delta\xi_s,\delta\xi_s)
\end{align*}
with $x_s\in[\xi_{s^-}, \xi_s]$. Moreover, we have
\[d^2f(x) (y,y) \leq \|y\|_{H_N^\beta} f(x).\]
It results that
\[d^2f(x_s)(\delta\xi_s,\delta\xi_s) \leq  f(x_s) \leq  e^2 f(\xi_s).\]
Taking the expectation, we obtain
\[\E[f(\xi_t)] \leq e + 4\int_0^t h_s \E[f(\xi_s)]\, ds.\]
The proof is concluded with Gr\"onwall's inequality.
\end{proof}

Using this lemma, we are able to prove a Sobolev version of Theorem \ref{prop:saut_cont_infty}.

\begin{theorem}\label{prop:saut_cont_beta}
Assume $l/(N^{2\beta}\log(N))\to+\infty$, then $(Y^N)_N$ converges to $0$ in probability for the uniform topology on $\DD\left(([0,T], H_N^{\beta}\right)$, i.e there exists $C>0$ such that for all $\varepsilon>0$
\[\P\left(\sup_{0\leq t\leq T}\|Y^N_t\|_{H_N^\beta}>\varepsilon\right) \leq C\exp\left(\log(N)-\varepsilon^2lN^{-2\beta}\right).\]
\end{theorem}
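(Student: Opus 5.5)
The plan is to mirror the proof of Theorem \ref{prop:saut_cont_infty}, replacing the scalar exponential bound of Lemma \ref{prop:exp_linfty} by its functional version, Lemma \ref{prop:exp_beta}. The argument has three steps: a bound at a fixed time, a bound on each small time cell, and a union bound over the cells.

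\textbf{Fixed time.} Fix $\bar t\le T$ and consider the $H_N$-valued martingale
\[\xi_t=\int_0^{t} T_N(\bar t-s)\,dZ^N_C(s\wedge\tau_N),\qquad \xi_{\bar t}=Y^N_{\bar t}.\]

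First I control the jumps. Each jump of $Z^N_C$ has the form $\gamma_r l^{-1}\ind_{I_j}$ or $l^{-1}(\ind_{I_{j\pm1}}-\ind_{I_j})$. By the lemma on $\|\ind_{I_j}\|_{H_N^\gamma}$ and the contractivity of $T_N$ in $H_N^\beta$,
\[\|\delta\xi_s\|_{H_N^\beta}\le C l^{-1}N^{\beta-1/2}\le Cl^{-1}.\]

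Next I compute the compensator of $\sum\|\delta\xi_s\|^2_{H_N^\beta}$. Expanding in the eigenbasis, as in Lemma \ref{prop:compensator}, it equals $\int g$ with
\[g_s=\frac1{Nl}\sum_{e_{m,N}}(1+\lambda_{m,N})^\beta\Big(\big\langle \bu^N,(\nabla_N^\pm T_N(\bar t-s)e_{m,N})^2\big\rangle+\big\langle \tilde R_C(\bu^N,\bv^N),(T_N(\bar t-s)e_{m,N})^2\big\rangle\Big).\]

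For the diffusion term I use $\|\bu^N\|_\infty\le M+2$ and $\|\nabla^\pm_N T_N(\bar t-s)e_{m,N}\|^2_{H_N^0}\le \lambda_{m,N}e^{-2\lambda_{m,N}(\bar t-s)}$. Integrating in time gives at most $1$ per mode, so this term contributes
\[\frac{C}{Nl}\sum_{m\le N/2}(1+\lambda_{m,N})^\beta\le \frac{C}{Nl}N\cdot N^{2\beta}=\frac{CN^{2\beta}}{l}.\]
The $\tilde b$ term is handled the same way, since $\|e_{m,N}\|_\infty\le\sqrt2$.

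For the $\tilde a\bu^N\bv^N$ and $\tilde d\bv^N$ terms I use positivity and duality,
\[\langle \bv^N,(T_Ne_{m,N})^2\rangle\le \|\bv^N\|_{H_N^{-\alpha}}\,\|(T_Ne_{m,N})^2\|_{H_N^{\alpha}}.\]
I then apply Theorem \ref{prop:prod+} together with $\|e_{m,N}\|_{H_N^{\gamma}}\le C m^{\gamma}$, with $\gamma$ just above $1/2$. This gives a per-mode bound $Cm^{2\alpha+1+\epsilon}e^{-c m^2(\bar t-s)}$, integrable in $s$ with integral $\le Cm^{2\alpha-1+\epsilon}$. Summing against $(1+\lambda_{m,N})^\beta$ gives a total of order $N^{2\beta+2\alpha+\epsilon}/N$, which is $\le CN^{2\beta}$ because $\alpha<1/2$ and $\epsilon$ can be chosen small. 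The bound $\|\bv^N\|_{H_N^{-\alpha}}\le M+1$ before $\tau_N$ is what makes $h$ deterministic here.

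Altogether $g\le h$ with $\int_0^{\bar t}h\le CN^{2\beta}/l$. Applying Lemma \ref{prop:exp_beta} to $\theta l\xi/C$ gives
\[\E\exp\Big(\tfrac{\theta l}{C}\|Y^N_{\bar t}\|_{H_N^\beta}\Big)\le \exp\big(1+C\theta^2 lN^{2\beta}\big).\]
I expect to need a slight refinement so that the constant term is not $e$ but does not spoil anything: Markov's inequality with $\theta=c\,\varepsilon N^{-2\beta}$ then yields
\[\P\big(\|Y^N_{\bar t}\|_{H_N^\beta}>\varepsilon\big)\le e\exp\big(-c\varepsilon^2 lN^{-2\beta}\big).\]

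\textbf{Small time cells.} To pass to the supremum over $t$, I split $[0,T]$ into $N^2$ intervals $\II_n$ as in Theorem \ref{prop:saut_cont_infty}. On $\II_n$ I write
\[Y^N_t=Y^N_{t_n}+\int_{t_n}^t\Delta_N Y^N_s\,ds+\zeta^n_t.\]
Since $\|\Delta_N\|_{H_N^\beta\to H_N^\beta}\le 4N^2$ and the cell has length $T/N^2$, Gronwall's inequality gives
\[\sup_{t\in\II_n}\|Y^N_t\|_{H_N^\beta}\le e^{4T}\Big(\|Y^N_{t_n}\|_{H_N^\beta}+\sup_{t\in\II_n}\|\zeta^n_t\|_{H_N^\beta}\Big).\]

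For $\zeta^n$ I apply Lemma \ref{prop:exp_beta} on $\II_n$. The compensator density is bounded by $(Nl)^{-1}\sum_m(1+\lambda_{m,N})^{\beta}(\lambda_{m,N}+C)$, which is $\le CN^{2+2\beta}/l$, where the $\bv^N$ term is handled by the same duality argument without time smoothing. Over an interval of length $T/N^2$ this integrates to $CN^{2\beta}/l$. Since $\|\zeta^n\|_{H_N^\beta}$ is a submartingale, Doob's inequality applied to $\exp(\theta\|\zeta^n\|)$ gives the same tail bound for its supremum over $\II_n$.

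\textbf{Union bound.} Summing the fixed-time bound and the cell bound over the $N^2$ cells, and absorbing $e^{4T}$ into $\varepsilon$, gives the stated bound $C\exp(\log N-\varepsilon^2lN^{-2\beta})$, up to constants in the exponent and with $\log N$ replaced by $2\log N$.

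\textbf{Main obstacle.} The delicate step is the compensator bound at fixed time for the $\bv^N$-terms. There one must check that the sum over modes of $(1+\lambda_{m,N})^\beta\,\|(T_N(\bar t-s)e_{m,N})^2\|_{H_N^\alpha}$, integrated in time, stays $O(N^{1+2\beta})$. This is where $\alpha<1/2$ and the product rule are used. I would first try bounding
\[\|(T_Ne_{m,N})^2\|_{H_N^\alpha}\le C\|T_Ne_{m,N}\|_{H_N^{\alpha}}\,\|T_Ne_{m,N}\|_{H_N^{1/2+\epsilon}},\]
using case (i) of Theorem \ref{prop:prod+}, and then use the exponential decay of each mode.
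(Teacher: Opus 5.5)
Your proposal is correct and follows the paper's proof essentially step by step. It uses the fixed-time bound through Lemma \ref{prop:exp_beta} with $\int_0^{\bar t}h\le CN^{2\beta}/l$, then Gr\"onwall on $N^2$ cells of length $T/N^2$ with Doob's inequality for $\zeta^n$, then a union bound; as you note, this really gives $C\exp(2\log N-c\,\varepsilon^2 lN^{-2\beta})$, which is exactly what the paper's own argument yields.

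The one place you work harder than necessary is your ``main obstacle''. Since $T_N(\bar t-s)e_{m,N}=e^{-\lambda_{m,N}(\bar t-s)}e_{m,N}$ and $\varphi_{m,N}^2+\psi_{m,N}^2=2$, the paper reduces the whole reaction term to $2\langle\tilde R_C(\bu^N,\bv^N),\mathbf{1}\rangle\le C$ without any product rule. Your duality/product-rule route also works, once you take the first factor in $H_N^{\alpha+\epsilon}$ rather than $H_N^{\alpha}$, to respect the strict inequality in Theorem \ref{prop:prod+}(i).
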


\begin{proof}
This proof follows the same steps as for the proof of Theorem \ref{prop:saut_cont_infty}. Let $0\leq\bar{t}\leq T$, $0\leq k\leq N-1$. We define the $H_N$-valued martingale 
\[\xi_t =  \int_0^t T_N(\bar{t}-s)\, dZ^N_D(s\wedge\tau_N).\]
This process satisfies $\xi_{\bar{t}} = Y^N_{\bar{t}}$.  

For all $0\leq t\leq \bar{t}$, we have
\[ \delta \xi_t = T_N(\bar{t}-t)\delta Z^N_C(t\wedge\tau_N).\]
Then, on the one hand, we have the following bound on the jumps size
\[\|\delta \xi_t\|_{H_N^\beta} \leq \|\delta Z^N_C(t\wedge\tau_N)\|_{H_N^\beta}\leq cl^{-1} N^{\beta-1/2}.\]

On the other hand, we have
\begin{align*}
\sum_{s\leq t} &\|\delta \xi_t\|_{H_N^\beta}^2\\
&= \sum_{s\leq t}\sum_{m=0}^{(N-1)/2} (1+\lambda_{m,N})^\beta \left(\langle T_N(\bar{t}-s) \delta Z^N_C(s\wedge\tau_N), \varphi_{m,N}\rangle^2 + \langle T_N(\bar{t}-s) \delta Z^N_C(s), \psi_{m,N}\rangle^2\right)\\
&= \sum_{m=0}^{(N-1)/2} (1+\lambda_{m,N})^\beta \sum_{s\leq t}e^{-2(\bar{t}-s)\lambda_{m,N}} \left(\langle  \delta Z^N_C(s\wedge\tau_N), \varphi_{m,N}\rangle^2 + \langle  \delta Z^N_C(s\wedge\tau_N), \psi_{m,N}\rangle^2\right).\\
\end{align*}

Hence, from Lemma \ref{prop:compensator}, the compensator has the form of Lemma \ref{prop:exp_beta} with
\begin{align*}
g_t 
= \frac{1}{Nl}\sum_{m=0}^{(N-1)/2} (1+\lambda_{m,N})^\beta e^{-2(\bar{t}-t)\lambda_{m,N}}&\left( \left\langle \bar{u}^N(t), (\nabla_N^+\varphi_{m,N})^2+(\nabla_N^-\varphi_{m,N})^2\right\rangle \right.\\
&+\left\langle \bar{u}^N(t), (\nabla_N^+\psi_{m,N})^2+(\nabla_N^-\psi_{m,N})^2\right\rangle\\
& \left. + \left\langle \tilde{R}_C(\bu^N(t), \bv^N(t)), \varphi_{m,N}^2 + \psi_{m,N}^2\right\rangle \right).\\
\end{align*}
Then, for all $0\leq s\leq t\wedge\tau_N$,  we have  
\begin{align*}
\langle \bu^N(s), (\nabla_N^+ \varphi_{m,N})^2\rangle
&\leq  \|\bu^N(s)\|_{H^0} \|(\nabla_N^+ \varphi_{m,N})^2\|_{H^0}\\
&\leq  M \|(\nabla_N^+ \varphi_{m,N})\|^2_{\infty}\\
&\leq 2M\pi^2m^2\\
&\leq cM\lambda_{m,N}\\
\end{align*}
This bound also holds replacing $\nabla_N^+$ or $\varphi_{m,N}$ by $\nabla_N^-$ or $\psi_{m,N}$. On the other hand, we have
\begin{align*}
\left\langle \tilde{R}_C(\bu^N(t), \bv^N(t)), \varphi_{m,N}^2 + \psi_{m,N}^2\right\rangle
& = \langle \tilde{a}\bu^N(t)\bv^N(t) + \tilde{b}(\bu^N(t)) + \tilde{d}\bv^N(t), \mathbf{1}\rangle\\
&\leq \tilde{a}\|\bu^N(t)\|_{H_N^\alpha}\|\bv^N(t)\|_{H_N^{-\alpha}} + \|\tilde{b}\|_{\infty,[0,M]} + \tilde{d}\|\bv^N(t)\|_{H_N^{-\alpha}}\\
&\leq C\\
\end{align*}

It follows that $g$ is bounded by the function 
\[h_t = \frac{C}{Nl}\sum_{m=0}^{(N-1)/2} (1+\lambda_{m,N})^{\beta+1} e^{-2(\bar{t}-t)\lambda_{m,N}},\]
which satisfies
\[\int_0^{\bar{t}} h_s\, ds \leq \frac{C}{Nl}\sum_{m=0}^{(N-1)/2} (1+\lambda_{m,N})^{\beta}\leq C\frac{N^{2\beta}}{l},\]
where the constant $C$ does not depend on $N$ nor $\bar{t}$. For $\theta\in[0,1]$, we apply Lemma \ref{prop:exp_beta} to the martingale $\frac{\theta l}{N^{\beta-1/2}}\xi$, whose jumps are bounded by $1$ in $H_N^\beta$-norm. It follows that
\[\E\left[\exp\left(\frac{\theta l}{N^{\beta-1/2}}\|\xi_{\bar{t}}\|_{H_N^\beta}\right)\right]\leq \exp\left(1+C\frac{N^{2\beta}}{l}\frac{\theta^2l^2}{N^{2\beta-1}}\right)=\exp\left(1+C\theta^2 lN\right).\]
Then, for $\varepsilon>0$, we have 
\[\P\left(\|Y^N_{\bar{t}}\|_{H_N^\beta}>\varepsilon\right)\leq \exp\left(1-C\varepsilon^2lN^{-2\beta}\right),\]
by taking the optimal $\theta = \varepsilon N^{-(\beta+1/2)}/(2C)\in[0,1]$. To conclude, we need to obtain an inequality for the supremum over $\bar{t}$. We decompose the interval $[0,T]$ as in the proof of Theorem \ref{prop:saut_cont_infty}. Then, for all $0\leq n\leq N^2-1$, and for all $t\in\II_n$, we have
\[\|Y^N_t\|_{H_N^\beta} \leq \|Y^N_{nT/N^2}\|_{H_N^\beta} +4N^2\int_{nT/N^2}^t\|Y^N_s\|_{H_N^\beta}\, ds + \|\zeta^n_t\|_{H_N^\beta}.\]
By Gr\"onwall's inequality, it follows that
\[\sup_{t\in\II_n}\|Y^N_t\|_{H_N^\beta} \leq \left(\|Y^N_{nT/N^2}\|_{H_N^\beta} + \sup_{t\in\II_n}\|\zeta^n_t\|_{H_N^\beta}\right)e^{4T}.\]
The martingale $\zeta^n$ satisfies $\delta\zeta^n_t = \delta Z^N_C(t\wedge\tau_N)$. It follows that we can apply Lemma \ref{prop:exp_beta} to $\zeta^n$ with 
\[\|\delta\zeta^n_t\|_{H_N^\beta}\leq C\frac{N^{\beta-1/2}}{l},\]
and
\[\tilde{g}_t  \leq \frac{cM}{Nl}\sum_{m=0}^{(N-1)/2}\lambda_{m,N}^{\beta+1}\leq C\frac{N^{2\beta+2}}{l}.\]

Using Doob's inequality for the martingale $\zeta^n$, it follows that, for all $n$
\[\E\left[\exp\left(\frac{\theta l}{N^{\beta-1/2}}\sup_{t\in\II_n}\|\zeta_{t}\|_{H_N^\beta}\right)\right]\leq\exp\left(1+C\theta^2 lN\right),\]
and
\[\P\left(\sup_{t\in\II_n}\|\zeta_{t}\|_{H_N^\beta}>\varepsilon\right) \leq \exp\left(1-C\varepsilon^2lN^{-2\beta}\right).\]
Therefore, we have
\[\P\left(\sup_{t\in[0,T]}\|Y^N_{t}\|_{H_N^\beta}>\varepsilon\right)\leq C\exp\left(\log(N)-\varepsilon^2lN^{-2\beta}\right).\]
This ends the proof.
\end{proof}

Let us notice that this result extends the work of \cite{Blount91}, for positive Sobolev spaces and for non-linear rate functions.  The convergence in $H_N^\beta$ requires a stronger assumption on the behaviour of $l$ as $N$ goes to $\infty$, than the convergence in the supremum norm. However, for $\beta<1/2$, the behaviour of $l$ is still sub-linear, which is far better than the quadratic behaviour $l/N^2\to\infty$ needed to guarantee a direct control of $Z^N_C$ (see \cite{BMM} for example). This suggests that $\beta$ should be thought as being small. 

\section{Convergence}
\label{sec:convergence}
The goal of this section is to prove the convergence of $(\bu^N, \bv^N)_{N\geq 1}$. The core argument is to prove that, along every converging subsequence for $(\bv^N)_{N\geq 1}$, the couple $(\bu^N, \bv^N)_{N\geq 1}$ converges to the solution of the system \eqref{eq:uv}. This is sufficient to prove the convergence. Indeed, it implies that the sequence $(\bv^N)_{N\geq 1}$ has at most one limit point, and as it is tight, it necessarily converges. Hence, the whole sequence $(\bu^N, \bv^N)_{N\geq 1}$ converges to the solution of \eqref{eq:uv}. In order to prove the convergence along subsequence, we proceed in two steps. The first one is probabilistic. We build an auxiliary sequence $(w^N)_{N\geq 1}$ such that $\bu^N-w^N$ converges to $0$ in probability, for the uniform topology on $\DD([0,T], L^\infty\cap H_N^\beta)$. This auxiliary process is the solution of a discrete PDE. Hence, the second step consists in proving the convergence of $(w^N, \bv^N)_{N\geq1}$ to the solution of \eqref{eq:uv}. This second step relies on numerical analysis arguments.

\subsection{The discrete PDE}

Let $v$ be a limit point of $(\bv^N)_{N\geq1}$. Up to taking a subsequence, we may assume that $\bv^N$ converges in distribution to $v$, and using Skorokhod's representation theorem, we may assume that the convergence holds almost surely.  We introduce the auxiliary processes $(w^N)_{N\geq1}$ defined as the solution of the discrete PDE : for all $0\leq t\leq T$
\begin{equation}\label{eq:wN}
w^N(t) = T_N(t)u^N_0 + \int_0^t T_N(t-s)R_C(w^N(s), v^N(s))\, ds.
\end{equation}

Let us remark that, as for $\bu^N$, the maximum principle shows that $w^N$ is bounded in $L^\infty$ and satisfies 
\[\sup_{0\leq t\leq T}\|w^N_t\|_\infty \leq M.\]

The control of the jumps' contribution $Y^N$, from Theorem \ref{prop:saut_cont_infty} and \ref{prop:saut_cont_beta}, implies the convergence of $\bar{u}^N-w^N$ to $0$ for both the Sobolev and the $L^\infty$ topologies.

\begin{prop}\label{prop:uNwN}
Assume that $lN^{-2\beta}/\log(N)\to\infty$ and $\|(u^N(0), v^N(0))\|_{\infty}<M$ a.s, then $\bu^N-w^N$ converges in probability to $0$ for the uniform topology on $\DD([0,T],L^\infty\cap H_N^{\beta})$, i.e for all $\varepsilon>0$, we have
\[\lim_{N\to\infty}\P\left(\sup_{0\leq t\leq T}\|\bar{u}^N(t)-w^N(t)\|_{H_N^\beta}>\varepsilon\right)=\lim_{N\to\infty}\P\left(\sup_{0\leq t\leq T}\|\bar{u}^N(t)-w^N(t)\|_{\infty}>\varepsilon\right)=0.\]
\end{prop}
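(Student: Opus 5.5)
We compare the two mild formulations. Writing $e^N(t)=\bu^N(t)-w^N(t)$, the mild formula for $\bu^N$ and \eqref{eq:wN} give
\[e^N(t) = Y^N_t + \int_0^t T_N(t-s)\left(R_C(\bu^N(s),\bv^N(s)) - R_C(w^N(s),\bv^N(s))\right) ds,\]
where we take the $v$-argument in \eqref{eq:wN} to be $\bv^N$. Since $R_C(u,v)=a_Cuv+b_C(u)+d_Cv$, the $d_C v$ terms cancel. The difference is then
\[a_C\, e^N(s)\,\bv^N(s) + \left(b_C(\bu^N(s))-b_C(w^N(s))\right).\]
Both $\bu^N$ and $w^N$ take values in $[0,M+2]$ by Proposition \ref{prop:ppmax} and the maximum principle. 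Because $b_C$ is $\CC^1$ on that interval, the second term is $e^N(s)\,\beta^N(s)$ with $\beta^N(s)=\int_0^1 b_C'(w^N+\theta e^N)\,d\theta$, and $\|\beta^N(s)\|_\infty\le C$. The plan is to run a Gronwall argument on the pair of norms $\|e^N\|_\infty+\|e^N\|_{H_N^\beta}$. We work on the event $\Omega_N=\{\sup_t\|Y^N_t\|_\infty\vee\|Y^N_t\|_{H_N^\beta}\le\varepsilon\}$, whose probability tends to $1$ by Theorems \ref{prop:saut_cont_infty} and \ref{prop:saut_cont_beta}; the scaling $lN^{-2\beta}/\log N\to\infty$ implies $l/\log N\to\infty$, so both theorems apply.

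\emph{The $b_C$ term.} In $L^\infty$, the contraction of $T_N$ gives a contribution bounded by $C\int_0^t\|e^N(s)\|_\infty ds$. In $H_N^\beta$ we use Lemma \ref{prop:regular} with $\alpha=0$:
\[\|T_N(t-s)(e^N\beta^N)\|_{H_N^\beta}\le C(t-s)^{-\beta/2}\|e^N\beta^N\|_{H_N^0}\le C(t-s)^{-\beta/2}\|e^N\|_\infty.\]
Since $\beta<1/2$, this kernel is integrable.

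\emph{The product term $a_C e^N\bv^N$.} This is the main obstacle, because $\bv^N$ is only controlled in $H_N^{-\alpha}$ and its $L^\infty$ bound is not uniform in $N$. We apply Corollary \ref{prop:prod-}(ii) with $u=\bv^N$ and $v=e^N$, taking the exponent $\beta$ in the role of the corollary's $\beta$. For this we need $-\gamma<\beta-\alpha-1/2$ with $\gamma\le 1/2$. Choosing $\gamma$ slightly below $1/2$ works when $\beta-\alpha>0$, which holds since $\alpha<\beta$. This gives
\[\|\bv^N e^N\|_{H_N^{-\gamma}}\le C\|\bv^N\|_{H_N^{-\alpha}}\|e^N\|_{H_N^\beta}.\]
Lemma \ref{prop:regular} then yields
\[\|T_N(t-s)(\bv^N e^N)\|_{H_N^\beta}\le C(t-s)^{-(\beta+\gamma)/2}\|\bv^N\|_{H_N^{-\alpha}}\|e^N\|_{H_N^\beta},\]
where $(\beta+\gamma)/2<1/2$, so the kernel is integrable. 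For the $L^\infty$ norm we use the one-dimensional discrete embedding $\|f\|_\infty\le C\|f\|_{H_N^{s}}$ for $s>1/2$, uniform in $N$; this follows from the Fourier bound $\|e_{m,N}\|_\infty\le\sqrt2$ and Cauchy--Schwarz. Taking $s\in(1/2,1)$ gives
\[\|T_N(t-s)(\bv^N e^N)\|_\infty\le C(t-s)^{-(s+\gamma)/2}\|\bv^N\|_{H_N^{-\alpha}}\|e^N\|_{H_N^\beta},\]
and $s+\gamma<2$ makes this kernel integrable as well. Here $\|\bv^N(s)\|_{H_N^{-\alpha}}\le M+1$ for $s\le\tau_N$. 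After $\tau_N$, $\bv^N$ solves a linear ODE, so the Gronwall step in Corollary \ref{prop:disc_bound} gives a bound $K$ on $\sup_t\|\bv^N_t\|_{H^{-\alpha}}$ in probability. We therefore work on the further event where this supremum is at most $K$.

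\emph{Conclusion.} Set $\phi(t)=\sup_{s\le t}\left(\|e^N(s)\|_\infty+\|e^N(s)\|_{H_N^\beta}\right)$. On the good event we obtain
\[\phi(t)\le 2\varepsilon + C_K\int_0^t (t-s)^{-\kappa}\phi(s)\,ds\]
for some $\kappa<1$. The singular (Henry-type) Gronwall lemma then gives $\phi(T)\le C\varepsilon$. Since $\varepsilon$ is arbitrary and the good event has probability tending to $1$, this proves the convergence in probability.
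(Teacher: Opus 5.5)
Your proposal is correct and follows essentially the same route as the paper: the mild formulation of $\bar u^N-w^N$, the dual product rule to place $(\bar u^N-w^N)\bar v^N$ in $H_N^{-\gamma}$, the smoothing of $T_N$, a singular Gronwall argument, and Theorems \ref{prop:saut_cont_infty}--\ref{prop:saut_cont_beta} together with Corollary \ref{prop:disc_bound} to control $Y^N$ and $\bar v^N$. The differences are cosmetic. You use Corollary \ref{prop:prod-}(ii) with $\gamma<1/2$ where the paper uses (i) with $\gamma>1/2$. You run one joint Gronwall on $\|\cdot\|_\infty+\|\cdot\|_{H_N^\beta}$ instead of closing the $H_N^\beta$ estimate first. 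You restrict to $\{\sup_t\|\bar v^N_t\|_{H_N^{-\alpha}}\le K\}$ rather than carrying the factor $\exp(C\sup_t\|\bar v^N_t\|^p_{H_N^{-\alpha}})$. Since that event only has probability at least $1-C/K$, you should fix $K$ before choosing $\varepsilon$, rather than claim the good event has probability tending to $1$.
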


\begin{proof}
Let $\gamma>1/2$ such that $\gamma+\beta<2$. For all $0\leq t\leq T$, we have
\begin{align*}
\bu^N_t-w^N_t 
&= Y^N_t + \int_0^t T_N(t-s) \left(a(\bu^N_s-w^N_s)\bv^N_s + b_C(\bu^N_s)-b_C(w^N_s)\right)\, ds\\
\|\bu^N_t-w^N_t \|_{H_N^\beta}
&\leq \|Y^N_t\|_{H_N^\beta} +\int_0^t\|T_N(t-s) \left(a(\bu^N_s-w^N_s)\bv^N_s + b_C(\bu^N_s)-b_C(w^N_s)\right)\|_{H_N^\beta}\, ds\\
&\leq \|Y^N_t\|_{H_N^\beta} +C\int_0^t (t-s)^{-(\beta+\gamma)/2}\|(\bu^N_s-w^N_s)\bv^N_s\|_{H_N^{-\gamma}}\, ds \\
&\quad + C\int_0^t (t-s)^{-(\beta+\gamma)/2}+\| b_C(\bu^N_s)-b_C(w^N_s)\|_{H_N^{-\gamma}}\, ds\\
\end{align*}
Then, we have 
\[\|(\bu^N_s-w^N_s)\bv^N_s\|_{H_N^{-\gamma}}\leq c \|\bv^N_s\|_{H_N^{-\alpha}}\|\bu^N_s-w^N_s\|_{H_N^{\beta}}.\]
On the other hand, we have
\[\|b_C(\bu^N_s)-b_C(w^N_s)\|_{H_N^{-\gamma}}\leq \|b_C(\bu^N_s)-b_C(w^N_s)\|_{H_N^{0}}\leq L\|\bu^N_s-w^N_s\|_{H_N^{0}}\leq L\|\bu^N_s-w^N_s\|_{H_N^{\beta}} ,\]
where $L$ is the Lipschitz constant of $b_C$ on $[0,M+1]$.
Hence, we have proved that
\begin{align*}
\|\bu^N_t-w^N_t \|_{H_N^\beta}
&\leq c(1+\sup_{0\leq \sigma\leq T}\|\bv^N_\sigma\|_{H_N^{-\alpha}})\int_0^t(t-s)^{-(\beta+\gamma)/2}\sup_{0\leq \sigma\leq s}\|\bu^N_\sigma-w^N_\sigma \|_{H_N^\beta}\, ds\\
&+\sup_{0\leq \sigma\leq T}\|Y^N_\sigma\|_{H_N^\beta} \\
\end{align*}
Let $p,q> 1$ such that $1/p+1/q=1$ and $q(\beta+\gamma)/2<1$. We have
\begin{align*}
\|\bu^N_t-w^N_t \|_{H_N^\beta}^p
\leq&  C(1+\sup_{0\leq \sigma\leq T}\|\bv^N_\sigma\|_{H_N^{-\alpha}}^p)\left(\int_0^t(t-s)^{-(\beta+\gamma)/2}\sup_{0\leq \sigma\leq s}\|\bu^N_\sigma-w^N_\sigma \|_{H_N^\beta}\, ds\right)^p\\
&+ C\sup_{0\leq \sigma\leq T}\|Y^N_\sigma\|_{H_N^\beta}^p \\
\leq& C(1+\sup_{0\leq \sigma\leq T}\|\bv^N_\sigma\|_{H_N^{-\alpha}}^p)\left(\int_0^t(t-s)^{-q(\beta+\gamma)/2}\, ds\right)^{p/q}\! \int_0^t\sup_{0\leq \sigma\leq s}\|\bu^N_\sigma-w^N_\sigma \|_{H_N^\beta}^p\, ds\\
&+ C\sup_{0\leq \sigma\leq T}\|Y^N_\sigma\|_{H_N^\beta}^p \\
\end{align*}
Then, the singular integral is bounded on $[0,T]$, and, for all $0\leq t\leq T$, we have
\[\sup_{0\leq \sigma\leq t}\|\bu^N_\sigma-w^N_\sigma \|_{H_N^\beta}^p \leq C\sup_{0\leq \sigma\leq T}\|Y^N_\sigma\|_{H_N^\beta}^p +C\left(1+\sup_{0\leq \sigma\leq T}\|\bv^N_\sigma\|_{H_N^{-\alpha}}^p\right)\int_0^t\sup_{0\leq \sigma\leq s}\|\bu^N_\sigma-w^N_\sigma \|_{H_N^\beta}^p\, ds.\]
We use Gr\"onwall's inequality to obtain the upper bound
\begin{equation}\label{eq:cvg_w_beta}
 \sup_{0\leq \sigma\leq t}\|\bu^N_\sigma-w^N_\sigma \|_{H_N^\beta}^p \leq C\sup_{0\leq \sigma\leq T}\|Y^N_\sigma\|_{H_N^\beta}^p \exp\left(C\sup_{0\leq \sigma\leq T}\|\bv^N_\sigma\|_{H_N^{-\alpha}}^p\right).
\end{equation}
To conclude for the $H_N^\beta$ convergence, let us denote $\kappa_N = l(N)N^{-2\beta}/\log(N)$. For all $\varepsilon>0$, we have
\begin{align*}
\P\left(\sup_{0\leq \sigma\leq t}\|\bu^N_\sigma-w^N_\sigma \|_{H_N^\beta}^p>\varepsilon\right)
&\leq \P\left(C\sup_{0\leq \sigma\leq T}\|Y^N_\sigma\|_{H_N^\beta}^p \exp\left(C\sup_{0\leq \sigma\leq T}\|\bv^N_\sigma\|_{H_N^{-\alpha}}^p\right)>\varepsilon\right)\\
&\leq \P\left(C\sup_{0\leq \sigma\leq T}\|Y^N_\sigma\|_{H_N^\beta}>\varepsilon^{1/p}/\kappa_N^{1/p}\right)\\
& \quad+ \P\left( \sup_{0\leq \sigma\leq T}\|\bv^N_\sigma\|_{H_N^{-\alpha}}^p>C\log(\kappa_N)\right)\\
&\leq C\exp\left(C\log(N)(1-\varepsilon^{2/p}\kappa_N^{1-1/p})\right) + \frac{C}{\kappa_N}.\\
\end{align*} 
This proves the convergence for the $H_N^\beta$ topology. For the $L^\infty$ convergence, we have
\begin{align*}
\|\bu^N_t-w^N_t \|_{\infty}
&\leq \|Y^N_t\|_{\infty} +C\int_0^t \|T_N(t-s)\left(a(\bu^N_s-w^N_s)\bv^N_s+b-C(\bu^N_s)-b_C(w^N_s)\right)\|_{\infty}\, ds  \\
&\leq \|Y^N_t\|_{\infty} +\int_0^t \|T_N(t-s)\left(a(\bu^N_s-w^N_s)\bv^N_s\right)\|_{H_N^{1/2+\varepsilon}}\, ds\\
&\quad +\int_0^t\|b_C(\bu^N_s)-b_C(w^N_s)\|_\infty\, ds \\
&\leq \|Y^N_t\|_{\infty} +C\int_0^t (t-s)^{-1/2-\varepsilon}\|(\bu^N_s-w^N_s)\bv^N_s\|_{H_N^{-1/2-\varepsilon}}\, ds\\
&\quad +L\int_0^t\|\bu^N_s-w^N_s\|_\infty\, ds \\
&\leq \|Y^N_t\|_{\infty} +C\int_0^t (t-s)^{-1/2-\varepsilon}\|\bu^N_s-w^N_s \|_{H_N^\beta}\|\bv^N_s \|_{H_N^{-\alpha}}\, ds\\
&\quad +L\int_0^t\|\bu^N_s-w^N_s\|_\infty\, ds \\
&\leq \|Y^N_t\|_{\infty} +C\sup_{0\leq \sigma\leq T}\|\bu^N_\sigma-w^N_\sigma \|_{H_N^\beta}\sup_{0\leq \sigma\leq T}\|\bv^N_\sigma \|_{H_N^{-\alpha}} +L\int_0^t\|\bu^N_s-w^N_s\|_\infty\, ds \\
\end{align*}
Hence,  using the bound \eqref{eq:cvg_w_beta}, we have
\[\sup_{0\leq \sigma\leq t}\|\bu^N_\sigma-w^N_\sigma \|_{\infty}
\leq C\sup_{0\leq \sigma\leq T}\|Y^N_\sigma\|_{\infty}\exp\left(2C\sup_{0\leq \sigma\leq T}\|\bv^N_\sigma \|^p_{H_N^{-\alpha}}\right)+ L\int_0^t\sup_{0\leq \sigma\leq s}\|\bu^N_\sigma-w^N_\sigma \|_{\infty}\, ds.\] 
The prove ends with the same arguments as for the $H_N^\beta$ case.
\end{proof}

\subsection{Convergence of the discrete PDE}

The goal of this section is to prove that the solution $w^N$ of the discrete PDE \eqref{eq:wN} converges to the unique solution of the PDE
\begin{equation}\label{eq_w}
\frac{d}{dt}w=\Delta w+ R_C(w,v), \, w(0)=u_0.
\end{equation} 
The main difficulties come from the low regularity of the limit $v$. That is why we use some regularisation procedures to conclude.

\begin{theorem}\label{prop:wNw}
Let $0<\alpha<\beta<1/2$, $\tilde \beta>\beta$, $T\ge 0$, $(v^N)_{N\in\mathbb N}$ a sequence in $C([0,T];H^N)$ and 
	$v\in C([0,T];H^{-\alpha})$ such that 
\[v^N\to v, \text{when} \, N\to \infty \text{ in }  C([0,T];H^{-\alpha}).\]
Let $(u_0^N)_{N\in\mathbb N}$ a sequence in $H_N$ and $u_0\in H^{\tilde\beta}\cap C([0,1])$ such that:
\[ u_0^N\to u_0, \text{when} \, N\to \infty \text{ in }  H^{\tilde\beta}\cap L^\infty.\]
Then, there exists a unique solution $w\in C(\mathbb R^+,H^\beta\cap L^\infty)$ to \eqref{eq_w}.
For $N\in\mathbb N$, let $w^N$ be the solution of \eqref{eq:wN} . Then 
\[ w^N\to w, \text{when} \, N\to \infty \text{ in }  C([0,T]\times[0,1])\] 
and 
\[ \|w^N-P_Nw\|_{H^\beta} \to 0, \text{when} \, N\to \infty .\]	
\end{theorem}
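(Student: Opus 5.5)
The plan has four steps: existence and uniqueness of the limit, uniform bounds on $w^N$, a regularised error equation, and a Gr\"onwall argument.

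\textbf{Existence and uniqueness of $w$.} For fixed $v\in C([0,T];H^{-\alpha})$, I would run a fixed point on the mild formulation
\[w(t)=e^{\Delta t}u_0+\int_0^t e^{\Delta(t-s)}\big(a_Cw(s)v(s)+b_C(w(s))+d_Cv(s)\big)\,ds\]
in $C([0,T];H^\beta\cap L^\infty)$. The product estimates carry over to the continuous setting: Corollary~\ref{prop:prod-} with $\beta\vee\gamma\le 1/2$ and $\gamma>\alpha-\beta+1/2$ gives $\|wv\|_{H^{-\gamma}}\le C\|w\|_{H^\beta}\|v\|_{H^{-\alpha}}$. Choosing $\gamma<1/2$ close to $1/2$, the semigroup smoothing $(t-s)^{-(\beta+\gamma)/2}$ is integrable. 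The $L^\infty$ bound comes from the embedding $H^{1/2+\epsilon}\subset L^\infty$ together with the smoothing factor $(t-s)^{-(1/2+\epsilon+\gamma)/2}$, which is integrable since $\gamma<1/2$. The $b_C$ term is handled by its local Lipschitz constant on $[0,M+1]$. The maximum principle (Assumption~\ref{ass:RC}) gives the a priori bound $\|w\|_\infty\le M$, so the local solution extends globally; this is essentially Lemma~\ref{prop:unic}. The same computation applied to \eqref{eq:wN} yields bounds on $w^N$ in $H_N^\beta\cap L^\infty$ that are uniform in $N$, using $\sup_N\sup_t\|v^N\|_{H_N^{-\alpha}}<\infty$ (the sequence converges) and $\|w^N\|_\infty\le M$.

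\textbf{Regularisation and error equation.} The main difficulty is comparing $T_N$ with $e^{\Delta t}$ when the forcing $v$ only lies in $H^{-\alpha}$. I would regularise. Take $v_\delta=e^{\delta\Delta}v$, which is smooth in space and continuous in time, and $u_0$ approximated by smooth $u_{0,\delta}$. Let $w_\delta$ be the corresponding smooth solution, and $w^N_\delta$ the solution of \eqref{eq:wN} with data $P_Nu_{0,\delta}$ and $P_Nv_\delta$. Then split
\[w^N-P_Nw=(w^N-w^N_\delta)+(w^N_\delta-P_Nw_\delta)+P_N(w_\delta-w).\]
For the first and third terms I would use the stability estimate from the Gr\"onwall argument in Proposition~\ref{prop:uNwN}, which is singular-kernel Gr\"onwall in $H_N^\beta$ with $p,q$ conjugate exponents. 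It bounds $\sup_t\|w^N-w^N_\delta\|_{H_N^\beta\cap L^\infty}$ by
\[C\big(\|u_0^N-P_Nu_{0,\delta}\|_{H_N^\beta\cap L^\infty}+\sup_t\|v^N-P_Nv_\delta\|_{H_N^{-\alpha}}\big),\]
with $C$ depending only on the uniform bounds. The right-hand side is small once $\delta$ is small and $N$ is large, using the norm equivalence of Lemma 2.9(d), \eqref{truc1}, and the estimate $\|v^N-P_Nv\|_{H^{-\alpha}}\to0$. This last estimate follows because $P_Nv-v$ is small in $H^{-\alpha}$ by duality with \eqref{truc2}. The third term is handled in the same way by continuous stability together with \eqref{truc1}.

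\textbf{Consistency at fixed $\delta$, and the main obstacle.} With $\delta$ fixed, all data are smooth, $w_\delta\in C([0,T];H^2)$, and $P_Nw_\delta$ satisfies \eqref{eq:wN} up to a consistency error
\[\rho_N=\Delta_NP_Nw_\delta-P_N\Delta w_\delta+R_C(P_Nw_\delta,P_Nv_\delta)-P_NR_C(w_\delta,v_\delta).\]
Using Fourier expansions and Lemma 2.9(c), together with \eqref{truc2} for the nonlinear commutator, I expect $\|\rho_N\|_{H_N^{-\gamma}}\le C_\delta N^{-\kappa}$ for some $\kappa>0$. The same Gr\"onwall argument then gives $w^N_\delta-P_Nw_\delta\to0$ in $H_N^\beta\cap L^\infty$. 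This consistency step, specifically the commutator $\Delta_NP_N-P_N\Delta$ in negative norms, is where I expect to work hardest. If it is too delicate, my first fallback is to compare spectrally via the factor $|e^{-\lambda_{m,N}t}-e^{-\lambda_mt}|$ applied to low modes only, and to bound the high modes by the data's regularity.

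\textbf{Conclusion.} Uniform convergence in $C([0,T]\times[0,1])$ follows by combining the $L^\infty$ estimates above with $\|P_Nw-w\|_\infty\to0$. The latter holds by uniform continuity of $w$, which is continuous since $w=e^{\Delta\cdot}u_0+{}$(a Duhamel term lying in $H^{1/2+\epsilon}$) and $u_0\in C^0$. I then send $N\to\infty$ first and $\delta\to0$ second.
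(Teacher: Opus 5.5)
Your argument is correct. Its skeleton is the paper's: regularise, split $w^N-P_Nw$ into the same three pieces, bound the outer two by a singular Gr\"onwall stability estimate, treat the middle one by consistency at fixed regularisation, and let $N\to\infty$ before removing the regularisation.

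The real difference is the intermediate discrete problem. The paper drives $w^{\epsilon,N}$ by the discrete mollifications $\rho^{\epsilon,N}*_Nu_0^N$ and $\rho^{\epsilon,N}*_Nv^N$ of the discrete data. This forces it to:
\begin{itemize}
\item prove Lemma~\ref{rho_N};
\item bound $w^N-w^{\epsilon,N}$ uniformly in $N$ through $\|v^N-v^{\epsilon,N}\|_{H_N^{-\alpha-\lambda}}\le C\epsilon^\lambda\|v^N\|_{H_N^{-\alpha}}$, paying $\lambda$ derivatives in the product rule;
\item compare $v^{\epsilon,N}$ with $P_Nv_\epsilon$ in Lemma~\ref{l5.5};
\item assume $\tilde\beta>\beta$, so that $\|u_0^N-\rho^{\epsilon,N}*_Nu_0^N\|_{H^\beta_N}\to0$ uniformly in $N$.
\end{itemize}
You drive $w^N_\delta$ instead by $P_Nu_{0,\delta}$ and $P_Ne^{\delta\Delta}v$. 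Then the middle comparison has zero initial error and the same coefficient on both sides, so only the two commutators remain. The first piece is fed by $\sup_t\|v^N-P_Nv_\delta\|_{H^{-\alpha}_N}$, which tends to zero qualitatively: $v([0,T])$ is compact in $H^{-\alpha}$, and $P_N$ is uniformly bounded on $H^{-\alpha}$ for $\alpha<1/2$. You lose the paper's explicit $\epsilon^\lambda$ rate, but nothing needs it. You also avoid the discrete convolution estimates and only use $u_0^N\to u_0$ in $H^\beta_N\cap L^\infty$. As in the paper, your stability constant requires $H^\beta_N$ bounds on $w^N_\delta$ uniform in $N$ and $\delta$ (the analogue of Lemma~\ref{l5.3}), because of the cross term $w^N_\delta(v^N-P_Nv_\delta)$.

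There are two local fixes.

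First, in the consistency step, Blount's Lemma 2.9(c) only gives $\lambda_{m,N}\asymp\lambda_m$. That yields boundedness, not smallness, of $\Delta_NP_N-P_N\Delta$. What you need is that $P_N$ commutes with translation by $1/N$, so $\Delta_NP_N=P_N\Delta_N^{c}$ with $\Delta_N^{c}f(x)=N^2(f(x+1/N)-2f(x)+f(x-1/N))$. Combine this with the symbol bound $0\le 4\pi^2k^2-4N^2\sin^2(\pi k/N)\le C\min(k^2,k^4/N^2)\le Ck^{2+\gamma}N^{-\gamma}$. Together with boundedness of $P_N$ from $H^{-\gamma}$ to $H^{-\gamma}_N$, this gives $\|(\Delta_NP_N-P_N\Delta)w_\delta\|_{H^{-\gamma}_N}\le CN^{-\gamma}\|w_\delta\|_{H^2}$, so no spectral fallback is needed. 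This $H^2$ version is actually better suited to $b_C\in C^1$ than the paper's Taylor bound, which invokes a fourth derivative of $w^\epsilon$.

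Second, duality with \eqref{truc2} only handles $L^2$ data. Simply write $v^N-P_Nv=P_N(v^N-v)$, since $v^N\in H_N$, and use boundedness of $P_N$ on $H^{-\alpha}$.

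Finally, the maximum-principle bound is $\max(M,\|u_0\|_\infty)$, not $M$. Since $v$ is only a distribution, it is cleanest to obtain this bound for $w$ by passing to the limit from $w_\delta$, where $e^{\delta\Delta}v$ is smooth (and nonnegative when $v$ is, as in the application). The paper also passes over this point.
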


\begin{remark}\label{r} Let us notice that in dimension greater than $1$, a similar results holds but we need stronger assumptions on 	the initial data $u_0$. This is due to product rules which are worse in higher dimension. Alternatively, we could prove 
 a convergence which does not hold up to time $0$ under similar assumptions on the initial data $0$. This is because we need to use the smoothing property 
of the heat semigroup on the initial data and this introduces a singularity in time. 

Similarly, up to the price of this convergence on positive time, we could get rid of the assumption that the initial 
data is continuous. 
\end{remark}
\hfill $\square$

\medskip
Since we use smoothing kernel in the proofs below. It is more convenient to work on the 
spatial interval $[-1/2,1/2]$. Since we work only at the level of partial differential equations
and difference equations, it is easy to switch from one interval to the other by the 
transformation $x\mapsto x-1/2$.

For instance, given a function $\varphi\in L^2(0,1)$, it is mapped to the
function $\psi(x)=\varphi(x-1/2)$. The new discrete space is the space of piecewise constant functions
which are constant on each $[(2l-1)/2N,(2l+1)/2N[$ for $l= -(N-1)/2, \dots , (N-1)/2$. The new projection 
from $L^2(-1/2,1/2)$ to the discrete space is defined by 
$$
\tilde P_N \psi_l =P_N \varphi_{(2l+N-1)/2}=N\int_{(2l-1)/2N}^{(2l+1)/2N}\psi(x)dx.
$$
Below we do not use tildes and use the same notations $H$, $H^\alpha$, $H^\alpha_N$ ... for the spaces on $[-1/2,1/2]$. Similarly we keep the
notations $\Delta$, $\Delta_N$, $T_N$ ... for the various operators. 

\bigskip

Let $(v^N)_{N\in \mathbb N}$ be a sequence in $C([0,T];H^N)$ and $v\in C([0,T];H^{-\alpha})$ for some $\alpha >0$ be such that $v^N\to v$ in $C([0,T];H^{-\alpha})$. 

We consider the equation:
\begin{equation*}
	\frac{d}{dt}w=\Delta w+ R_C(w,v), \, w(0)=u_0
\end{equation*}
with periodic boundary conditions on $[-1/2,1/2]$. Recall that $R_C(u,v)=a_Cuv+b_C(u)+d_C v$ and it satisfies Assumption \ref{ass:RC}.

\begin{prop}\label{prop_w}
	Let $\beta \in (\alpha, 1/2)$ and assume that 
	$u_0\in H^\beta\cap L^\infty$, then there exists a unique solution to \eqref{eq_w} in 
	$C(\mathbb R^+,H^\beta\cap L^\infty)$.
\end{prop}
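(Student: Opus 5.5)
We will prove existence and uniqueness of a mild solution
\[w(t)=e^{\Delta t}u_0+\int_0^t e^{\Delta(t-s)}\big(a_Cw(s)v(s)+b_C(w(s))+d_Cv(s)\big)\,ds\]
by a fixed point argument in $X_T=C([0,T];H^\beta\cap L^\infty)$, first locally in time, then globally thanks to an a priori $L^\infty$ bound. The continuous analogues of the tools from Section \ref{sec:model} will be used: the smoothing estimate $\|e^{\Delta t}f\|_{H^{\sigma}}\le ct^{-(\sigma-\rho)/2}\|f\|_{H^\rho}$ (the classical version of Lemma \ref{prop:regular}), the embedding $H^{1/2+\varepsilon}\subset L^\infty$, and the continuous product rule corresponding to Corollary \ref{prop:prod-}, namely
\[\|wv\|_{H^{-\gamma}}\le C\|w\|_{H^\beta}\|v\|_{H^{-\alpha}}\]
for some $\gamma\in(1/2,1)$. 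This rule is admissible because $\beta>\alpha$ and $\gamma>1/2$.

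\textbf{Step 1: contraction.} Pick $\gamma>1/2$ close to $1/2$ so that $(\beta+\gamma)/2<1$ and $(1/2+\varepsilon+\gamma)/2<1$. Let $\Phi$ denote the right-hand side of the mild formula and let $K=\sup_{[0,T]}\|v\|_{H^{-\alpha}}$.

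\emph{Linear terms.} The terms $a_Cwv$ and $d_Cv$ are estimated in $H^\beta$ and in $H^{1/2+\varepsilon}\subset L^\infty$ via smoothing from $H^{-\gamma}$. This gives factors $(t-s)^{-(\beta+\gamma)/2}$, respectively $(t-s)^{-(1/2+\varepsilon+\gamma)/2}$, both integrable. Since $b_C$ is only $\mathcal C^1$, we work on a ball $\{\|w\|_{L^\infty\cap H^\beta}\le\rho\}$, on which $b_C$ is Lipschitz with constant $L_\rho$. The term $d_Cv$ lies only in $H^{-\alpha}$, but it is smoothed in the same way.

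\emph{Nonlinear term.} For $b_C(w)$ we use $\|b_C(w)-b_C(w')\|_{L^2}\le L_\rho\|w-w'\|_{L^\infty}$ and smooth from $L^2$ to $H^\beta$. In $L^\infty$ we simply use that the heat semigroup is an $L^\infty$ contraction. We do not need $b_C(w)\in H^\beta$.

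These bounds show that $\Phi$ maps the ball into itself and is a contraction for small $T_0$, with $T_0$ depending only on $\rho$, $K$ and $\|u_0\|_{H^\beta\cap L^\infty}$. Continuity in time in $H^\beta\cap L^\infty$ follows from strong continuity of $e^{\Delta t}$ on $H^\beta$ and the integrable singularities. For $L^\infty$-continuity at $t=0$ we use the continuity of $u_0$, as in Theorem \ref{prop:wNw}; otherwise we would work in $C((0,T];L^\infty)$. Uniqueness in $X_T$ follows from the same Lipschitz estimates combined with the singular Gronwall argument used in the proof of Proposition \ref{prop:uNwN}.

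\textbf{Step 2: global existence and the main obstacle.} Global existence follows once we have an a priori bound on $\|w\|_{L^\infty}$, which then controls $\|w\|_{H^\beta}$ through the mild formula and Gronwall. This is the main difficulty. The maximum principle argument of Proposition \ref{prop:ppmax} uses $R_C(u,v)<0$ for $u>M$ and $v\ge0$, but here $v$ is only a distribution in $H^{-\alpha}$, so pointwise comparison is not directly available.

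I would first try approximation. Take the discrete solutions $w^N$ of \eqref{eq:wN}, or regularise $v$ by $v_\delta=\rho_\delta*v\ge0$, which keeps nonnegativity because $v$ is a limit of nonnegative $v^N$. For the regularised problem the maximum principle gives $\|w_\delta\|_\infty\le M$. The estimates of Step 1 are uniform in $\delta$, since $\|v_\delta\|_{H^{-\alpha}}\le\|v\|_{H^{-\alpha}}$. The local contraction then gives stability: $w_\delta\to w$ on $[0,T_0]$, so $\|w(t)\|_\infty\le M$ there.

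Because $T_0$ depends only on $\rho$, $K$ and the norm of the data, and the $L^\infty$ bound $M$ is preserved, we can iterate. The $H^\beta$ norm is kept bounded by the linear Gronwall estimate $\|w(t)\|_{H^\beta}\le C+C\int_0^t(t-s)^{-(\beta+\gamma)/2}\|w(s)\|_{H^\beta}\,ds$. This yields a solution on every $[0,T]$, hence in $C(\mathbb R^+,H^\beta\cap L^\infty)$.
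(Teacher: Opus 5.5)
Your proof is correct, but it is organised differently from the paper's. The paper replaces $b_C$ by a truncated, globally Lipschitz $b_C^M$, so the whole nonlinearity is globally Lipschitz (the $a_Cwv$ term is linear in $w$). The contraction time then depends only on that Lipschitz constant and on $v$, not on the size of the data, so iterating gives a global solution of the truncated equation at once. The truncation is then removed by invoking the maximum principle and Assumption~\ref{ass:RC}.

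You instead use local Lipschitz bounds on balls and continue the solution with an a priori $L^\infty$ bound plus a linear singular Gr\"onwall estimate on $\|w\|_{H^\beta}$. This is a standard blow-up-alternative argument: slightly more bookkeeping, but no truncation.

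The more substantive difference is that you justify the maximum principle, which the paper only asserts even though $v(t)$ is merely an element of $H^{-\alpha}$. You mollify $v$, apply the classical maximum principle to the regularised problem, and pass to the limit using the stability built into the contraction. This requires $v\ge 0$ in the sense of distributions, which holds in the application since $\bv^N\ge 0$. The lower bound also requires $u_0\ge 0$ together with $R_C(0,v)\ge 0$ for $v\ge 0$, which follows from Assumption~\ref{ass:rate}. The paper's argument silently needs the same hypotheses, and neither appears in the statement.

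You are also right that continuity in $L^\infty$ at $t=0$ requires $u_0$ to be continuous, as assumed in Theorem~\ref{prop:wNw}, because $H^\beta\not\subset\CC^0$ for $\beta<1/2$. The statement as written overlooks this point.
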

\begin{proof}
	The proof follows by a truncation argument and a fixed point argument on the mild form of the equation. 

	Take a smooth function $\theta$ with compact support in $[-2,2]$ and equal to $1$ on $[-1,1]$, define 
	$b_C^M(x) = \theta(x/M_1)$, with $M_1=2\max\{\| u_0^N\|_\infty, M\}$ and $R_C^M$ as $R_C$ replacing $b_C$ by $b_C^M$.

	Define $\mathcal T\,:\, C([0,T],H^\beta)\to C([0,T],H^\beta)$ by
	$$
	\mathcal Tw(t) = e^{\Delta t} u(0) + \int_0^t e^{\Delta(t-s)}R_C^M(w(s), v(s))\, ds.
	$$
	By product rules in Sobolev spaces , we know that for $\gamma> \alpha+1/2-\beta$:
	$$
	\|wv\|_{H^{-\gamma}} \le C \|w\|_{H^\beta}\|v\|_{H^{-\alpha}}.
	$$
	We choose $3/2>\gamma> \alpha+1/2-\beta$.
	
	Note that since $b_C$ is Lipschitz on bounded sets, $b_C^M$ is Lipschitz on $\mathbb R$.
	
	Using the smoothing property of the heat semigroup, we deduce, for $w_1,\, w_2 \in C([0,T],H^\beta\cap L^\infty)$:
\begin{align*}
\|\mathcal T w_1(t)-\mathcal T w_2(t)\|_{H^\beta} 
\le& \| u(0)\|_{H^\beta} + C \int_0^t |t-s|^{-\beta/2}\|w_1(s)-w_2(s)\|_{L^\infty}\, ds\\
& + C \int_0^t |t-s|^{-(\gamma+\beta)/2}	\|w_1(s)-w_2(s)\|_{H^\beta}\|v(s)\|_{H^{-\alpha}}\, ds\\
\end{align*}
	and
	
\begin{align*}
\|\mathcal T w_1(t)-\mathcal T w_2(t)\|_{L^\infty} 
\le& \| u(0)\|_{L^\infty} + C \int_0^t \|w_1(s)-w_2(s)\|_{L^\infty}\, ds\\
& + C \int_0^t |t-s|^{-(\gamma+1/2)/2} \|w_1(s)-w_2(s)\|_{H^\beta}\|v(s)\|_{H^{-\alpha}}\, ds\\
\end{align*}

Since $v\in C(\mathbb R^+; H^{-\alpha})$, we deduce that for $T$ sufficiently small, depending only on the Lipschitz constant of $b_C^M$ and $v$, $\mathcal T$ is a contraction. It has therefore a unique fixed point $w$ which is a solution 
	of the truncated equation where $b_C^M$ replaces $b_C$. Iterating this construction, we get rid of the smallness condition on $T$.

	By the maximum principle and Assumption \ref{ass:RC}, we know that $w$ is bounded in the $L^\infty$ norm by $\max\{\| u_0^N\|_\infty, M\}$. Thus 
	$$
	b_C^M(w(t))=	b_C(w(t), t\ge 0,
	$$
	and $w$ is a solution of the original equation without the truncation.

\end{proof}

Let $(w_N)_{N\in \mathbb N}$ be the solutions of Equations \eqref{eq:wN} in $H^N$:
\begin{equation}\label{eq_wN}
\frac{d}{dt}w_N=\Delta_N w_N + R_C(w_N,v_N), \, w_N(0)=u_0^N.
\end{equation}
By Cauchy-Lipschitz theorem there exists a local in time solution and, by the maximum principle and Assumption \ref{ass:RC}, it is global.

We introduce a smoothing kernel $(\rho^\epsilon)_{\epsilon}$ defined classically, for $\epsilon>0$, by: 
$$
\rho^{\epsilon}(y)=\epsilon^{-1}\rho(\frac{y}{\epsilon}),\ y\in [-1/2,1/2],
$$
for a smooth compactly supported even non negative $\rho$ such that $\int_{\mathbb{R}}\rho(z)dz=1$ and whose support is 
included in $](1/2,1/2)$. All these functions can be seen as smooth periodic functions on $[-1/2,1/2]$. We also write $\rho^{\epsilon,N}=P_N \rho^{\epsilon}$.

We then define $v_\epsilon=\rho^{\epsilon}*v,\; v^{\epsilon,N}=\rho^{\epsilon,N}*_{N}v^N\; \epsilon>0,\; N\in \mathbb{N}$, where, for two 
functions in $a,b\in H_N$, we write $(a*_N b )_k=\frac1N \sum_{\ell=-(N-1)/2}^{(N-1)/2} a_{k-\ell}b_k$, and consider $u^\epsilon$, $w^{\epsilon,N}$ satisfying:
$$
w^\epsilon(t) = e^{\Delta t}\rho^\epsilon * u(0) + \int_0^t e^{\Delta(t-s)} R_C(w^\epsilon(s), v^\epsilon(s))\, ds,
$$
\[w^{\epsilon,N}(t) = T_N(t)(\rho^{\epsilon,N} *_N u^N_0) + \int_0^t T_N(t-s)R_C(w^{\epsilon,N}(s), v^{\epsilon,N}(s))\, ds.\]

The proof of the existence of $w^\epsilon$, $w^{\epsilon,N}$ is the same as for $w$ and $w^N$. They satisfy uniform bounds.

Let us first remark that by the maximum principle, we already know that all these functions are uniformly bounded in the supremum norm by $\max\{\| u_0^N\|_\infty, M\}$.

 We need the following technical lemma whose counterpart in the continuous case is classical.

\begin{lemme}\label{rho_N}
	Let $\varphi\in H_N$ then for any $\epsilon>0$, $\alpha\in [-1,1]$:
	$$
	\|\rho^{\epsilon,N} *_N \varphi\|_{H^\alpha_N} \le  \| \varphi\|_{H^\alpha_N}
	$$
	and for $\alpha,\, \beta \in [-1,1]$ such that $0\le \beta-\alpha\le 1$:
	$$
	\|\rho^{\epsilon,N} *_N \varphi -\varphi\|_{H^\alpha_N} \le C_{\beta,\alpha}\epsilon^{\beta-\alpha} \| \varphi\|_{H^\beta_N}
	$$
\end{lemme}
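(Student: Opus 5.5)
The plan is to diagonalise the discrete convolution in the basis $\{\varphi_{m,N},\psi_{m,N}\}$, since $*_N$ is a convolution on the cyclic group $\mathbb Z/N\mathbb Z$. Write $r_k=\rho^{\epsilon,N}_k$ for the value of $\rho^{\epsilon,N}=P_N\rho^\epsilon$ on the $k$-th cell. Since $\rho$ is even and the cells are symmetric about $0$ on $[-1/2,1/2]$, we have $r_k=r_{-k}$. Also $r_k\ge 0$ and $\frac1N\sum_k r_k=\int\rho^\epsilon=1$, because $P_N$ preserves nonnegativity and integrals.

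A direct computation using $\cos(a-b)=\cos a\cos b+\sin a\sin b$ and the evenness of $r$ gives
\[
\rho^{\epsilon,N}*_N\varphi_{m,N}=\hat r(m)\,\varphi_{m,N},\qquad \rho^{\epsilon,N}*_N\psi_{m,N}=\hat r(m)\,\psi_{m,N},
\]
with the real multiplier
\[
\hat r(m)=\frac1N\sum_k r_k\cos(2\pi mk/N).
\]
Hence for every $\alpha$,
\[
\|\rho^{\epsilon,N}*_N\varphi\|_{H^\alpha_N}^2=\sum_m(1+\lambda_{m,N})^\alpha\,\hat r(m)^2\left(\langle\varphi,\varphi_{m,N}\rangle^2+\langle\varphi,\psi_{m,N}\rangle^2\right).
\]
The same identity holds for $\rho^{\epsilon,N}*_N\varphi-\varphi$ with $\hat r(m)$ replaced by $\hat r(m)-1$.

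For the first inequality it suffices to note that $|\hat r(m)|\le\frac1N\sum_k r_k=1$ by positivity and unit mass.

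For the second inequality, I will bound
\[
\|\rho^{\epsilon,N}*_N\varphi-\varphi\|_{H^\alpha_N}^2\le \sup_m\Big[(1-\hat r(m))^2(1+\lambda_{m,N})^{\alpha-\beta}\Big]\,\|\varphi\|_{H^\beta_N}^2 .
\]
The mode $m=0$ contributes nothing, since $\hat r(0)=1$. For $m\ge1$, Blount's Lemma (c) gives $1+\lambda_{m,N}\ge c\,m^2$. It is therefore enough to show
\[
0\le 1-\hat r(m)\le C\min\{1,(\epsilon m)^2\}.
\]
Once that is established, $\min\{1,x^2\}\le x^{s}$ for all $x\ge0$ and $s\in[0,2]$, and taking $s=\beta-\alpha\in[0,1]$ gives $(1-\hat r(m))^2(1+\lambda_{m,N})^{\alpha-\beta}\le C\epsilon^{2(\beta-\alpha)}$, which is the claim.

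The bound $1-\hat r(m)\le 2$ is trivial. For the quadratic bound I use
\[
1-\hat r(m)=\frac1N\sum_k r_k\bigl(1-\cos(2\pi mk/N)\bigr)\le 2\pi^2m^2\,\frac1N\sum_k r_k\,(k/N)^2 ,
\]
so what is needed is a second-moment estimate for the discretised kernel, $\frac1N\sum_k r_k (k/N)^2\le C\epsilon^2$ uniformly in $N$. I expect this to be the only delicate point, because of the regime $\epsilon\lesssim 1/N$, where $P_N\rho^\epsilon$ is not a faithful rescaling of $\rho$.

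To handle it, note that $\rho^\epsilon$ is supported in $[-c\epsilon,c\epsilon]$ for some fixed $c$ depending only on the support of $\rho$. Hence $r_k\neq0$ only if the $k$-th cell $[(2k-1)/2N,(2k+1)/2N[$ meets this interval, which forces $|k|/N-1/(2N)<c\epsilon$. The term $k=0$ contributes nothing to the moment. For $k\neq0$ we have $1/(2N)\le |k|/(2N)$, so $|k|/N<c\epsilon+|k|/(2N)$, that is $|k|/N<2c\epsilon$. Combined with $\frac1N\sum r_k=1$, this gives the second moment bound $\le 4c^2\epsilon^2$ for all $N$ and $\epsilon$. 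The constant $C_{\beta,\alpha}$ then depends only on $\rho$ and the constant in Blount's Lemma (c).
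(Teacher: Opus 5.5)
Your argument is correct, but it takes a different route from the paper's proof. The paper works in physical space at endpoint indices and then interpolates. For the first inequality it proves $\alpha=0$ by Jensen's inequality, using positivity and unit mass of $\rho^{\epsilon,N}$. It gets $\alpha=1$ by commuting $\nabla_N^+$ with $*_N$, interpolates to $\alpha\in[0,1]$, and passes to $\alpha\in[-1,0]$ by duality. For the second inequality it proves the case $(\alpha,\beta)=(0,1)$ by writing $\varphi_k-\varphi_{k-l}$ as a telescoping sum of $\nabla_N^+\varphi$, applying Cauchy--Schwarz and the second-moment bound $\frac1{N^3}\sum_l l^2\rho^{\epsilon,N}_l\le C\epsilon^2$. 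It then interpolates with the trivial case $\alpha=\beta$ and shifts indices by commuting the convolution with $(I-\Delta_N)^{\alpha/2}$.

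You instead diagonalise $*_N$ in the basis $\{\varphi_{m,N},\psi_{m,N}\}$ and reduce both inequalities to the scalar bounds $|\hat r(m)|\le 1$ and $0\le 1-\hat r(m)\le C\min\{1,(\epsilon m)^2\}$. The quantitative input is the same second-moment estimate. Your support argument proves it uniformly in $N$, including the regime $\epsilon\lesssim 1/N$, whereas the paper states the comparison $\frac1N\sum_l (l/N)^2\rho^{\epsilon,N}_l\le C\int x^2\rho^\epsilon$ without justification.

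What your route buys: it avoids interpolation theory, which the paper invokes rather loosely, and treats all indices at once. It in fact yields the estimate for every real $\alpha$ and all $0\le\beta-\alpha\le 2$, with an explicit constant depending only on $\rho$ and Blount's Lemma (c). The price is that it depends on the exact Fourier diagonalisation, and it uses the evenness of $\rho$ to make the symbol real. The paper's $(0,1)$ computation is purely physical-space and mirrors the classical continuous proof. Both arguments tacitly take $\epsilon\le 1$, so that $\rho^\epsilon$ has unit mass on $[-1/2,1/2]$; for larger $\epsilon$ the statement is trivial anyway.
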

\begin{proof}
	We first consider $\alpha = 0,\, 1$. For $\alpha = 0$, we have, using $	\frac{1}{N}\sum_{k=0}^{N-1}\rho^{\epsilon,N}_{k}=1$, 
	$$
	\|\rho^{\epsilon,N} *_N \varphi\|_{H_N}^2=\frac{1}{N^3}\sum_{k=0}^{N-1}|\sum_{l=0}^{N-1}\rho^{\epsilon,N}_{k-l}\varphi_l|^2\le
	\frac{1}{N^2}\sum_{k=0}^{N-1}\sum_{l=0}^{N-1}\rho^{\epsilon,N}_{k-l}|\varphi_l|^2 \le 
	\frac1{N}\sum_{l=0}^{N-1}|\varphi_l|^2 = \| \varphi\|_{H_N}^2.
	$$
	We then write:
	$$
	\|\nabla_N^+(\rho^{\epsilon,N} *_N \varphi)\|_{H_N}^2
	=\|\rho^{\epsilon,N} *_N \nabla_N^+\varphi\|_{H_N}^2 \le \|\nabla_N^+\varphi\|_{H_N}^2	$$
	using the result for $\alpha=0$. The result follows for $\alpha = 0,\, 1$. By interpolation, it also follows for $\alpha\in [0,1]$.
	
	We then use duality for $\alpha\in [-1,0]$. Take $\varphi\in H_N$ and write:
	$$
	\|\rho^{\epsilon,N} *_N \varphi\|_{H^\alpha_N}=\sup_{\|\psi\|_{H^{-\alpha}_N}=1} (\rho^{\epsilon,N} *_N \varphi,\psi)_{H_N}=\sup_{\|\psi\|_{H^{-\alpha}_N}=1} ( \varphi,\rho^{\epsilon,N} *_N\psi)_{H_N}
	\le \| \varphi\|_{H^\alpha_N}.
	$$
	This proves the first point. 
	
	For the second, we first consider $\alpha =0$ and $\beta=1$ and write:
	\begin{align*}
		\|\rho^{\epsilon,N} *_N \varphi -\varphi\|_{H_N}^2&=\frac1{N^3}\sum_{k=-(N-1)/2}^{(N-1)/2}\big|\sum_{l=-(N-1)/2}^{(N-1)/2}  \rho^{\epsilon,N}_l (\varphi_k-\varphi_{k-l})   \big|^2\\
		&\le \frac1{N^2}\sum_{k=-(N-1)/2}^{(N-1)/2}\sum_{l=-(N-1)/2}^{(N-1)/2}  \rho^{\epsilon,N}_l \big| (\varphi_k-\varphi_{k-l})   \big|^2\\
		&\le  \frac1{N^4}\sum_{k=-(N-1)/2}^{(N-1)/2}\sum_{l=-(N-1)/2}^{(N-1)/2}  \rho^{\epsilon,N}_l \big| \sum_{m=k-l}^{k-1}(\nabla_N^+\varphi)_m  \big|^2\\
		&\le  \frac1{N^4}\sum_{k=-(N-1)/2}^{(N-1)/2}\sum_{l=-(N-1)/2}^{(N-1)/2}  l\rho^{\epsilon,N}_l  \sum_{m=k-l}^{k-1}\big|(\nabla_N^+\varphi)_m  \big|^2\\
		&\le  \frac1{N^4}\sum_{m=-(N-1)/2}^{(N-1)/2}\sum_{l=-(N-1)/2}^{(N-1)/2}  l^2\rho^{\epsilon,N}_l  \big|(\nabla_N^+\varphi)_m  \big|^2\\
		&\le  C\epsilon^2 \frac1{N}\sum_{m=-(N-1)/2}^{(N-1)/2} \big|(\nabla_N^+\varphi)_m  \big|^2\\
		&=C\epsilon^2 |\varphi|^2_{H^1_N}.
	\end{align*}
	We have used:
	$$
	\frac1{N^3}\sum_{l=-(N-1)/2}^{(N-1)/2}  l^2\rho^{\epsilon,N}_l=\frac1N \sum_{l=-(N-1)/2}^{(N-1)/2}  \left(\frac{l}{N}\right)^2\rho^{\epsilon,N}_l
	\le C \int_{-1/2}^{1/2}x^2 \rho^\epsilon(x) dx= C\epsilon^2 \int_{-1/2}^{1/2}x^2 \rho(x) dx.
	$$
	The result is clearly true for $\alpha=\beta$ and we deduce the general case for $\alpha,\, \beta$ non negative by interpolation the $(0,1)$ and 
	$(\alpha/(1-\alpha+\beta),\alpha/(1-\alpha+\beta))$. 
	
	
	For the general case, we write, using the commutation of the discrete convolution and $\Delta_N$:
	\begin{align*}
		&\|\rho^{\epsilon,N} *_N \varphi -\varphi\|_{H_N^\alpha}= \|(I-\Delta_N)^{\alpha/2}(\rho^{\epsilon,N} *_N \varphi -\varphi)\|_{H_N}\\
		&\le C \epsilon^{\beta-\alpha} \|(I-\Delta_N)^{\alpha/2}\varphi\|_{H_N^{\beta-\alpha}} =C \epsilon^{\beta-\alpha} \|\varphi\|_{H_N^\beta}.
	\end{align*} 
	
\end{proof}
\begin{lemme}\label{l5.3}
	Let $\beta \in (\alpha, \frac12)$ and assume that 
	$u_0\in H^\beta\cap L^\infty$, then for any $T>0$, $(w^\epsilon)_{\epsilon>0}$ is uniformly bounded in $C([0,T],H^\beta)$.
	
	Moreover, then for any $T>0$, $(w^N)_{N\in \mathbb N}$ and $(w^{\epsilon,N})_{\epsilon >0,\, N\in \mathbb N}$ are uniformly bounded with respect to $\epsilon$ and $N$ in $C([0,T],H^\beta_N)$.
\end{lemme}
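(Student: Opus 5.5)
The plan is to work with the mild formulation of each equation, combining the uniform $L^\infty$ bound from the maximum principle with the smoothing of the heat semigroup (Lemma \ref{prop:regular} in the discrete case, its classical analogue in the continuous case) and the product rules. The argument is identical for $w^\epsilon$, $w^N$ and $w^{\epsilon,N}$, so I describe it for $w^{\epsilon,N}$. It satisfies
\[w^{\epsilon,N}(t) = T_N(t)(\rho^{\epsilon,N}*_N u_0^N) + \int_0^t T_N(t-s)\big(a_C w^{\epsilon,N}(s)v^{\epsilon,N}(s) + b_C(w^{\epsilon,N}(s)) + d_C v^{\epsilon,N}(s)\big)\,ds .\]

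First I bound the initial term. Since $T_N$ is a contraction on $H_N^\beta$, Lemma \ref{rho_N} gives $\|T_N(t)(\rho^{\epsilon,N}*_N u_0^N)\|_{H_N^\beta}\le \|u_0^N\|_{H_N^\beta}$. This is uniformly bounded because $u_0^N$ converges in $H_N^{\tilde\beta}$.

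Second, I bound the source terms, taking $\gamma$ with $\alpha+1/2-\beta<\gamma<1$ (possible since $\alpha<\beta$). The term $b_C(w^{\epsilon,N})$ is bounded in $L^\infty\subset H_N^0$ by $\sup_{[0,M_1]}|b_C|$, using the maximum-principle bound. Lemma \ref{prop:regular} then gives a contribution of order $\int_0^t (t-s)^{-\beta/2}\,ds\le C$. For $d_C v^{\epsilon,N}$, Lemma \ref{rho_N} yields $\|v^{\epsilon,N}\|_{H_N^{-\alpha}}\le \|v^N\|_{H_N^{-\alpha}}$. This is uniformly bounded in $N$ because $v^N$ converges in $C([0,T];H^{-\alpha})$ and the discrete and continuous negative norms are equivalent (Lemma 2.9(d)). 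Its contribution is therefore $\lesssim\int_0^t (t-s)^{-(\alpha+\beta)/2}\,ds$, which is finite.

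Third, I treat the product term with Corollary \ref{prop:prod-}(ii), which gives $\|wv\|_{H_N^{-\gamma}}\le C\|w\|_{H_N^\beta}\|v\|_{H_N^{-\alpha}}$. Combined with Lemma \ref{prop:regular}, this yields
\[\|w^{\epsilon,N}(t)\|_{H_N^\beta}\le C + C K\int_0^t (t-s)^{-(\beta+\gamma)/2}\|w^{\epsilon,N}(s)\|_{H_N^\beta}\,ds,\qquad K=\sup_{N,s}\|v^N(s)\|_{H_N^{-\alpha}} .\]
Here $(\beta+\gamma)/2<1$ because $\beta<1/2$ and $\gamma<1$.

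Fourth, I close the estimate with a singular Grönwall lemma, or with the Hölder trick from the proof of Proposition \ref{prop:uNwN} (exponent $q$ with $q(\beta+\gamma)/2<1$). This gives a bound depending only on $T$, $K$, $M_1$ and $\sup_N\|u_0^N\|_{H_N^{\beta}}$, hence uniform in $\epsilon$ and $N$. Continuity in time of $w^{\epsilon,N}$ is automatic, since for fixed $N$ it solves a finite-dimensional ODE. For $w^\epsilon$ the same chain applies with the classical semigroup, the classical product rule (as in Proposition \ref{prop_w}), and the bounds $\|\rho^\epsilon*v\|_{H^{-\alpha}}\le\|v\|_{H^{-\alpha}}$ and $\|\rho^\epsilon*u_0\|_{H^\beta}\le\|u_0\|_{H^\beta}$.

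The main obstacle is the product estimate. One must pick $\gamma$ so that the product rule holds with $\beta<1/2$ and $\alpha<\beta$, while keeping the time singularity integrable, and must check the borderline case of Corollary \ref{prop:prod-}(ii). The uniformity of $K$ also has to be justified, by passing from convergence of $v^N$ to a bound and using norm equivalence. If the discrete product rule turns out to be delicate here, I would fall back on the $L^\infty$ bound, using positivity of $v^N$ and duality as in the proof of Theorem \ref{prop:saut_cont_infty}.
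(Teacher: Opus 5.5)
Your proof is correct and follows essentially the same route as the paper. Both use the mild formulation, the maximum-principle $L^\infty$ bound for the $b_C$ term, the dual product rule $\|wv\|_{H_N^{-\gamma}}\le C\|w\|_{H_N^\beta}\|v\|_{H_N^{-\alpha}}$ with $\gamma>\alpha+1/2-\beta$, semigroup smoothing, the uniform $H_N^{-\alpha}$ bound on $v^{\epsilon,N}$ (from Lemma \ref{rho_N}, convergence of $v^N$ and norm equivalence), and a singular Gr\"onwall argument. The one adjustment is to choose $\gamma\in(\alpha+1/2-\beta,1/2]$ rather than just $\gamma<1$, so that case (ii) of Corollary \ref{prop:prod-} applies literally; any larger $\gamma$ then follows since $\|\cdot\|_{H_N^{-\gamma'}}\le\|\cdot\|_{H_N^{-\gamma}}$ for $\gamma'\ge\gamma$.
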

\begin{proof}
	Using the same arguments as in the proof of Proposition \ref{prop_w}, we have, with $3/2>\gamma> \alpha+1/2-\beta$:
	$$
	\| w^\epsilon(t)\|_{H^\beta} \le  \| u(0)\|_{H^\beta} + C \int_0^t |t-s|^{-\beta/2}M_1 + |t-s|^{-(\gamma +\beta)/2}
	(\|w^\epsilon(s)\|_{H^\beta} +1)\|v^\epsilon(s)\|_{H^{-\alpha}}\, ds,
	$$
	and Gr\"onwall's inequality indeed imply a uniform bounds on $w^\epsilon$ in $C([0,T],H^\gamma)$ since $v^\epsilon$ is uniformly bounded in 
	$C([0,T],H^{-\alpha})$.
	
	The proof of the other points follows from the same computations since $T_N$ satisfies the same smoothing property as the heat kernel in $H_N$, 
	$v^{\epsilon,N}$ is uniformly bounded in 
	$C([0,T],H^{-\alpha}_N)$ thanks to Lemma \ref{rho_N}, by the convergence of $v_N$ to $v$ in
	$C([0,T],H^{-\alpha})$, and the equivalence of the discrete and continuous Sobolev norm on $H^{-\alpha}_N$ (\cite{Blountphd}).
\end{proof}

\begin{lemme}
	Let $\beta \in (\alpha,\frac12)$ and $0<\lambda<1+\beta-\alpha$ 
	$$
	\sup_{[0,T]}	\|w^\epsilon(t)-w(t)\|_{H^\beta}\le \|u_0-\rho_\epsilon *u_0\|_{H^{\beta}}+C \epsilon^{\lambda}\sup_{[0,T]}\|v(t)\|_{H^{-\alpha}},
	$$
	$$
	\sup_{[0,T]}	\|w^\epsilon(t)-w(t)\|_{L^\infty}\le \|u_0-\rho_\epsilon *u_0\|_{L^\infty}+C \epsilon^{\lambda}\sup_{[0,T]}\|v(t)\|_{H^{-\alpha}}
	$$
	and for $N\in\mathbb N$:
	$$
	\sup_{[0,T]}	\|w^{\epsilon,N}(t)-w^N(t)\|_{H_N^\beta}\le \|u_0^N-\rho_\epsilon^N *_N u_0^N\|_{H_N^{\beta}}+C \epsilon^{\lambda}\sup_{[0,T]}\|v^N(t)\|_{H_N^{-\alpha}},
	$$
	$$
	\sup_{[0,T]}	\|w^{\epsilon,N}(t)-w^N(t)\|_{L^\infty}\le \|u_0^N-\rho_\epsilon^N *_N u_0^N\|_{L^\infty}+C \epsilon^{\lambda}\sup_{[0,T]}\|v^N(t)\|_{H_N^{-\alpha}}
	$$
	for a constant $C$ independant on $\epsilon$ or $N$.
\end{lemme}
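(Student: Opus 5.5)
Set $e=w^\epsilon-w$; I treat the continuous case and then repeat it verbatim for $w^{\epsilon,N}-w^N$, using Lemma \ref{prop:regular}, Corollary \ref{prop:prod-} and Lemma \ref{rho_N} in place of their continuous counterparts. Subtracting the mild formulations gives
\[
e(t)=e^{\Delta t}(\rho^\epsilon*u_0-u_0)+\int_0^t e^{\Delta(t-s)}\Big(a_C\big(e(s)v^\epsilon(s)+w(s)(v^\epsilon(s)-v(s))\big)+b_C(w^\epsilon(s))-b_C(w(s))+d_C(v^\epsilon(s)-v(s))\Big)\,ds .
\]
The first term is bounded, in either $H^\beta$ or $L^\infty$, by $\|u_0-\rho^\epsilon*u_0\|$, since the heat semigroup is a contraction. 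The terms $a_C e v^\epsilon$ and $b_C(w^\epsilon)-b_C(w)$ are linear or Lipschitz in $e$. The Lipschitz property uses the uniform $L^\infty$ bound $M_1$ from the maximum principle, and $\sup_t\|v^\epsilon(t)\|_{H^{-\alpha}}\le \sup_t\|v(t)\|_{H^{-\alpha}}$. These terms are handled exactly as in the proof of Proposition \ref{prop_w}: the product rule gives $\|ev^\epsilon\|_{H^{-\gamma}}\le C\|e\|_{H^\beta}\|v^\epsilon\|_{H^{-\alpha}}$, smoothing gives the factor $(t-s)^{-(\gamma+\beta)/2}$, and a H\"older plus singular Gronwall step, as in Proposition \ref{prop:uNwN}, yields
\[
\sup_{[0,T]}\|e\|_{H^\beta}\le C\Big(\|u_0-\rho^\epsilon*u_0\|_{H^\beta}+\text{source}\Big).
\]

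The source terms are $a_C w(v^\epsilon-v)+d_C(v^\epsilon-v)$. For these I use the second estimate of Lemma \ref{rho_N}, in its continuous version, between negative indices. For $\lambda\le1$,
\[
\|v^\epsilon-v\|_{H^{-\alpha-\lambda}}\le C\epsilon^{\lambda}\|v\|_{H^{-\alpha}}.
\]
For the $d_C$ term, smoothing from $H^{-\alpha-\lambda}$ to $H^\beta$ costs $(t-s)^{-(\beta+\alpha+\lambda)/2}$. This is integrable exactly when $\lambda<2-\alpha-\beta$, which covers the range $\lambda<1+\beta-\alpha$ for small $\beta$. For the product $w(v^\epsilon-v)$ I use Corollary \ref{prop:prod-} with $w\in H^\beta$ uniformly, by Lemma \ref{l5.3}. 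The negative index is then $-\alpha-\lambda$ with $\alpha+\lambda$ possibly larger than $1$, outside the range covered by the corollary.

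This is the main obstacle. My first attempt is to argue by duality: test against $\phi\in H^{\sigma}$ and write
\[
\langle w(v^\epsilon-v),\phi\rangle=\langle v,\rho^\epsilon*(w\phi)-w\phi\rangle .
\]
Lemma \ref{rho_N} then gives
\[
|\langle w(v^\epsilon-v),\phi\rangle|\le C\epsilon^{\lambda}\|v\|_{H^{-\alpha}}\|w\phi\|_{H^{\alpha+\lambda}}.
\]
The gain $\lambda$ is limited by the regularity of $w\phi$. Since $w\in H^{\beta}$ only, the product rule gives $w\phi\in H^{\beta-}$ at best, which would restrict us to $\lambda<\beta-\alpha$. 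To reach $\lambda<1+\beta-\alpha$, I instead exploit that $w$ is a mild solution: it has extra regularity for positive times, and the singularity at $0$ is killed by the time integral. Alternatively, I keep only the weaker $\lambda$, which suffices for the later use of this lemma, where all that is needed is $\epsilon^\lambda\to0$ uniformly in $N$.

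With the source bounded by $C\epsilon^\lambda\sup_t\|v\|_{H^{-\alpha}}$, Gronwall concludes the $H^\beta$ estimate. The $L^\infty$ estimate follows by the same route, using $H^{1/2+}\hookrightarrow L^\infty$ on the convolution integrals and the $H^\beta$ bound just obtained, as in the $L^\infty$ part of Proposition \ref{prop:uNwN}. Every constant depends only on smoothing and product-rule constants, which are uniform in $N$, so the same argument gives the discrete statements with $C$ independent of $\epsilon$ and $N$.
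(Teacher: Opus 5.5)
Your decomposition, your handling of $a_C e v^\epsilon$ and $b_C(w^\epsilon)-b_C(w)$, and the singular Gronwall step essentially coincide with the paper's proof. The real difference is the source term, and there you have found a genuine problem in the paper's argument.

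The paper treats the source term in one line. It chooses $3/2>\gamma>\alpha+\lambda+1/2-\beta$ and asserts $\|w^\epsilon(v-v^\epsilon)\|_{H^{-\gamma}}\le C\|w^\epsilon\|_{H^\beta}\|v-v^\epsilon\|_{H^{-\alpha-\lambda}}$. It then uses Lemma~\ref{l5.3} and $\|v-v^\epsilon\|_{H^{-\alpha-\lambda}}\le C\epsilon^\lambda\|v\|_{H^{-\alpha}}$. The hypothesis $\lambda<1+\beta-\alpha$ is exactly what allows $\gamma<3/2$, so that $(t-s)^{-(\gamma+\beta)/2}$ is integrable. However, no estimate $H^\beta\times H^{-\alpha-\lambda}\to H^{-\gamma}$ can hold when $\alpha+\lambda>\beta$, whatever $\gamma$. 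By duality it would give $\|w\phi\|_{H^{\alpha+\lambda}}\le C\|w\|_{H^\beta}\|\phi\|_{H^\gamma}$, and $\phi\equiv 1$ then gives $\|w\|_{H^{\alpha+\lambda}}\le C\|w\|_{H^\beta}$. Correspondingly, Corollary~\ref{prop:prod-} requires the negative index to be smaller than the positive one. So the obstruction is $\alpha+\lambda>\beta$, not $\alpha+\lambda>1$ as you first suggest. Read literally, the paper's argument proves the lemma only for $\lambda<\beta-\alpha$. That is exactly what your duality computation gives, and your fallback is a correct proof of that version. It is also all that the proof of Theorem~\ref{prop:wNw} uses, since only $\epsilon^\lambda\to0$ uniformly in $N$ matters there.

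Measured against the statement as written, though, your proposal has a gap: the range $\beta-\alpha\le\lambda<1+\beta-\alpha$ is only announced (``extra regularity for positive times''). In the continuous setting the idea can be completed, but it needs three ingredients you do not supply:
\begin{enumerate}
\item a weighted bound $\|w(s)\|_{H^{\alpha+\lambda}}\le Cs^{-(\alpha+\lambda-\beta)/2}$ from the mild formula, using smoothing of $u_0\in H^\beta$ plus the Duhamel term with $R_C(w,v)\in H^{-\gamma'}$, $\gamma'>\alpha+1/2-\beta$, which needs $\alpha+\lambda+\gamma'<2$ (possible in this range);
\item a tame product estimate $\|w\phi\|_{H^{\alpha+\lambda}}\le C(\|w\|_{H^{\alpha+\lambda}}\|\phi\|_{L^\infty}+\|w\|_{L^\infty}\|\phi\|_{H^{\alpha+\lambda}})$ with $\phi=e^{\Delta(t-s)}\psi$, $\psi\in H^{-\beta}$;
\item finiteness of $\int_0^t(t-s)^{-a}s^{-b}\,ds$ for the resulting exponents, all of which are below $1$ when $\lambda<1+\beta-\alpha$ and $\beta<1/2$.
\end{enumerate}

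The discrete case cannot be done ``verbatim''. The product rules of Appendix~\ref{sec:prod} and Lemma~\ref{rho_N} stop at index $1$, so with the available tools you reach at most $\lambda<1-\alpha$. The weighted bound for $w^N$ would also have to be proved uniformly in $N$.

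Two smaller points are shared with the paper:
\begin{itemize}
\item Gronwall leaves a constant in front of $\|u_0-\rho^\epsilon*u_0\|$; you rightly keep it.
\item Feeding the $H^\beta$ bound on $e$ into the term $a_Cev^\epsilon$ puts $C\|u_0-\rho^\epsilon*u_0\|_{H^\beta}$, not only the $L^\infty$ norm, into your $L^\infty$ estimate.
\end{itemize}
Both are harmless downstream because $u_0\in H^{\tilde\beta}$, but neither matches the inequalities exactly as stated.
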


\begin{proof}
	Define $r^\epsilon=w^\epsilon-w$, then we may write
	\begin{align*}
		&r^\epsilon(t)=e^{\Delta t}(\rho^\epsilon*u_0 -u_0)\\
		&+\int_0^t e^{\Delta (t-s)}\left((b_C(w(s))-b_C(w^\epsilon(s))) +a_C (w(s)v(s)-w^\epsilon(s)v^\epsilon(s))+d_C(v(s)-v^\epsilon(s))\right)ds 
	\end{align*}
	Since $w$ and $w^\epsilon$ are uniformaly bounded, we may assume that $b_C$ is a Lipschitz function, we deduce:
	$$
	\|b_C(w(s))-b_C(w^\epsilon(s)\|_{L^2}\le L\|w(s)-w^\epsilon(s)\|_{L^2}\le  L\|w(s)-w^\epsilon(s)\|_{H^\gamma}.
	$$
	We  take $3/2>\gamma> \alpha+\lambda+1/2-\beta$. By the product rules in Sobolev spaces (indiquer le résultat):
	
\begin{align*}
\|w(s)v(s)-w^\epsilon(s)v^\epsilon(s)\|_{H^{-\gamma}}
\le & C \|w(s)-w^\epsilon(s)\|_{H^\beta}\|v(s)\|_{H^{-\alpha}}\\
& +C\|w^\epsilon(s)\|_{H^\beta}\|v(s)-v^\epsilon(s)\|_{H^{-\alpha-\lambda}}\\
\end{align*}

	so that by Lemma \ref{l5.3}:
\[\|w(s)v(s)-w^\epsilon(s)v^\epsilon(s)\|_{H^{-\gamma}}\le C\left( \|w(s)-w^\epsilon(s)\|_{H^\beta}+\|v(s)-v^\epsilon(s)\|_{H^{-\alpha-\lambda}}\right).\]

We then use the smoothing property of the heat semigroup to write:
\begin{align*}
\|r^\epsilon(t)\|_{H^\beta}
\le &  \|u_0-\rho_\epsilon *u_0\|_{H^{\beta}} +C\int_0^t |t-s|^{-(\gamma+\beta)/2}\left(\|r^\epsilon(s)\|_{H^\beta}+\|v(s)-v^\epsilon(s)\|_{H^{-(\alpha+\lambda}}\right)ds\\
\le & \|u_0-\rho_\epsilon *u_0\|_{H^{\beta}} + C \epsilon^{\lambda}T^{1-(\gamma+\beta)/2}\sup_{[0,T]}\|v(s)\|_{H^{-\alpha}})\\ 
& +C\int_0^t |t-s|^{-(\gamma+\beta)/2}\|r^\epsilon(s)\|_{H^\beta}ds.\\
\end{align*}
	The result by Gr\"onwall's inequality. 
	
	The other results are proved similarly, in particular thanks to the previous Lemmas which give all the property we need to obtain constants uniform in $N$. 
\end{proof}

\begin{lemme}\label{l5.5}
	$$
	\|v^{\epsilon,N}-P_Nv^{\epsilon}\|_{L^2}\le C_\epsilon(\frac{1}{N}+\|v^N-v\|_{H^{-\alpha}}). 
	$$
\end{lemme}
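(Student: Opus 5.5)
We split the difference through the intermediate object $P_N v^\epsilon$'s discrete analogue, namely $\rho^{\epsilon,N} *_N P_N v$, writing
\[
v^{\epsilon,N}-P_Nv^{\epsilon} = \rho^{\epsilon,N}*_N (v^N - P_N v) + \big(\rho^{\epsilon,N}*_N P_N v - P_N(\rho^\epsilon * v)\big).
\]
The first term carries the dependence on $\|v^N-v\|_{H^{-\alpha}}$, the second is a pure discretisation error producing the $1/N$. Throughout, $\epsilon$ is fixed and constants may blow up as $\epsilon\to0$.

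For the first term, we note that convolution with the smooth kernel gains arbitrary regularity at an $\epsilon$-dependent cost. Concretely, on the Fourier side $\rho^{\epsilon,N}*_N$ acts diagonally on $\{\varphi_{m,N},\psi_{m,N}\}$, since the discrete convolution commutes with translations. Its multiplier is the discrete Fourier coefficient of $\rho^{\epsilon,N}$, which is bounded by $C_\epsilon (1+\lambda_{m,N})^{-\alpha/2}$ uniformly in $N$. This follows by summation by parts, because $\nabla_N^\pm\rho^{\epsilon,N}$ are bounded in $L^1$ by $C_\epsilon$. Hence
\[
\|\rho^{\epsilon,N}*_N f\|_{L^2}\le C_\epsilon \|f\|_{H_N^{-\alpha}}.
\]
We apply this with $f=v^N-P_Nv$ and use the equivalence of discrete and continuous negative norms (Blount's Lemma) to get $\|v^N-P_Nv\|_{H_N^{-\alpha}}\le C\|P_N(v^N-v)\|_{H^{-\alpha}}$. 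It remains to justify that $P_N$ is bounded on $H^{-\alpha}$. For this we use duality with \eqref{truc1}: since $P_N$ is self-adjoint, $\|P_Ng\|_{H^{-\alpha}}=\sup_{\|\psi\|_{H^\alpha}\le1}\langle g,P_N\psi\rangle$. The function $P_N\psi$ lies in $H_N$ with $\|P_N\psi\|_{H_N^\alpha}\le C$, which gives the bound when $g\in H_N$. The general case follows by density, with $v^N\in H_N$.

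For the second term, we write $P_N v$ versus $v$. We express $P_N(\rho^\epsilon*v)(x)$ and $(\rho^{\epsilon,N}*_N P_Nv)(x)$ as $\langle v, K^\epsilon_x\rangle$ and $\langle v, K^{\epsilon,N}_x\rangle$, where both kernels are averages over cells of translates of $\rho^\epsilon$. Their difference satisfies
\[
\|K^\epsilon_x-K^{\epsilon,N}_x\|_{H^{\alpha}}\le \|K^\epsilon_x-K^{\epsilon,N}_x\|_{H^1}\le C_\epsilon/N,
\]
because $\rho^\epsilon$ is smooth, using the Lipschitz/$C^2$ bounds of $\rho^\epsilon$ and the estimate \eqref{truc2}-type error $\|(I-P_N)g\|\le C\|g\|_{H^1}/N$ applied to derivatives of $\rho^\epsilon$. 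Therefore, pointwise in $x$,
\[
\big|\rho^{\epsilon,N}*_N P_N v - P_N(\rho^\epsilon*v)\big|(x)\le C_\epsilon N^{-1}\|v\|_{H^{-\alpha}}.
\]
Integrating in $x$ gives the $L^2$ bound, and $\sup_t\|v(t)\|_{H^{-\alpha}}$ is finite and absorbed into $C_\epsilon$.

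The main obstacle is the kernel comparison in the second term. The discrete convolution involves the index shift $k-\ell$, so the translates are by multiples of $1/N$ rather than continuous ones. I would handle this by writing both operators as integral operators with kernels $k_N(x,y)$ and $k(x,y)=\rho^\epsilon(x-y)$ averaged over cells. The two kernels differ only through replacing $\rho^\epsilon(x-y)$ by its average over a square of side $1/N$ around $(x,y)$. A Taylor expansion then gives a uniform $C^1$-in-$y$ difference of order $\|\rho^\epsilon\|_{C^2}/N$. This is enough, since the $H^\alpha$ norm with $\alpha<1/2$ is controlled by the $H^1$ norm of the kernel in $y$.
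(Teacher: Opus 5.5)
There is a genuine gap in your second term, and a secondary flaw in your first; your Fourier-multiplier estimate $\|\rho^{\epsilon,N}*_Nf\|_{L^2}\le C_\epsilon\|f\|_{H^{-\alpha}_N}$ is fine.

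\textbf{The second term.} Since $(P_Nv)_\ell=N\langle v,\mathbf{1}_{I_\ell}\rangle$, you have $\rho^{\epsilon,N}*_NP_Nv(x)=\langle v,K^{\epsilon,N}_x\rangle$ with $K^{\epsilon,N}_x=\sum_\ell\rho^{\epsilon,N}_{k-\ell}\mathbf{1}_{I_\ell}$. This kernel is a step function in the integration variable: it is the piecewise constant interpolant of the smooth $K^\epsilon_x$ at the cell centres. So $K^\epsilon_x-K^{\epsilon,N}_x$ jumps by $\sim C_\epsilon/N$ at every cell boundary. It is not in $H^1$, not even in $H^{1/2}$, and your ``$C^1$-in-$y$ difference of order $\|\rho^\epsilon\|_{C^2}/N$'' is false.

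What does hold is $\|K^\epsilon_x-K^{\epsilon,N}_x\|_{H^\alpha}=O(C_\epsilon N^{\alpha-1})$, since the difference is a modulated sawtooth of amplitude and period $1/N$. This rate is sharp for your term. Take $v=N^\alpha\cos(2\pi(N+1)\cdot)$, whose $H^{-\alpha}$-norm is of order one. Because $\rho^\epsilon$ is smooth, $P_N(\rho^\epsilon*v)=O(N^{-k})$ for every $k$. But aliasing turns $P_Nv$ into a discrete frequency-one mode of amplitude $\sim N^{\alpha-1}$, so $\|\rho^{\epsilon,N}*_NP_Nv-P_N(\rho^\epsilon*v)\|_{L^2}$ is of order $N^{\alpha-1}$. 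Your decomposition tests the rough limit $v$ against cell indicators, so it can give at best $C_\epsilon(N^{\alpha-1}+\|v^N-v\|_{H^{-\alpha}})$. That would still suffice for the next lemma at fixed $\epsilon$, but it is not the stated $1/N$ bound.

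\textbf{How the paper avoids this.} The paper never applies $P_N$ to $v$. It writes $v^{\epsilon,N}-P_Nv^\epsilon=(\rho^{\epsilon,N}*_Nv^N-\rho^\epsilon*v^N)+\rho^\epsilon*(v^N-v)+(I-P_N)v^\epsilon$ and bounds each piece separately.
In the first piece the discretisation error is tested only against $v^N\in H_N$. The relevant kernel is then the discrete sequence $a_\ell(k,x)=N\int_{(2\ell-1)/2N}^{(2\ell+1)/2N}\big(\rho^\epsilon(k/N-y)-\rho^\epsilon(x-y)\big)\,dy$. It is genuinely $O(C_\epsilon/N)$ in $H^1_N$ because $|x-k/N|\le 1/(2N)$, and discrete duality gives $C_\epsilon N^{-1}\|v^N\|_{H^{-1}_N}$.
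The second piece is bounded by $C_\epsilon\|v^N-v\|_{H^{-\alpha}}$, since $\rho^\epsilon*$ maps $H^{-\alpha}$ into $L^2$.
The third is $C_\epsilon/N$ by \eqref{truc2}.

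\textbf{The first term.} The paper's route also removes your need for $\|P_Ng\|_{H^{-\alpha}}\le C\|g\|_{H^{-\alpha}}$, whose justification is flawed. \eqref{truc1} controls the discrete norm $\|P_N\psi\|_{H^\alpha_N}$, but pairing with a general $g\in H^{-\alpha}$ needs the continuous norm $\|P_N\psi\|_{H^\alpha}$. The case $g\in H_N$ is trivial because $P_Ng=g$, and $H_N$ is not dense, so the density step proves nothing. The bound is true for $\alpha<1/2$, but proving it requires a comparison $\|\phi\|_{H^\alpha}\le C\|\phi\|_{H^\alpha_N}$ on step functions, which is not among the stated tools.
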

\begin{proof}

	Let us split $v^{\epsilon,N}-P_N v^{\epsilon}$ into $\rho^{\epsilon,N}*_{N}v^N-\rho^\epsilon*v^N$, $(I-P_N )v^\epsilon$ and 
	$\rho^\epsilon*(v^N-v)$. Then, thanks to the smoothing property of the convolution:
	$$
	\|\rho^\epsilon*(v^N-v)\|_{L^2}\le C(\epsilon,\alpha)\|v_N-v\|_{H^{-\alpha}},
	$$
	and by Equation \eqref{truc2}
	$$
	\|(I-P_N )v^\epsilon\|_{L^2}\le \frac {C}N  \|v^\epsilon\|_{H^1} \le \frac{C_\epsilon}N. 
	$$
	Let $k=-(N-1)/2,\dots , (N-1)/2$ and $x\in [(2k-1)/2N, (2k+1)/2N)$, and write
	\begin{align*}
		|\rho^{\epsilon,N}*_{N}v^N(x)-\rho^\epsilon*v(x)|&=\left|\sum_{l=-(N-1)/2}^{(N+1)/2}a_l(k,x) v^N_l \right|\\ \\
		&\le \|a(k,x)\|_{H_N^1}\|v^N\|_{H^{-1}_N},
	\end{align*}
	with 
	$$
	a(k,x)= N \int_{(2l-1)/2N}^{(2l+1)/2N}
	\rho^{\epsilon}(k/N-y)-\rho^{\epsilon}(x-y)dy.
	$$
	Then:
	\begin{align*}
		\|a(k,x)\|_{H_N}^2&=\frac1N \sum_{l=-(N-1)/2}^{(N+1)/2} |N \int_{(2l-1)/2N}^{(2l+1)/2N}
		\rho^{\epsilon}(k/N-y)-\rho^{\epsilon}(x-y)dy|^2\\
		& \le \frac1{4N^2}L_{\epsilon}^2,
	\end{align*}
	where $L_\epsilon$ is the Lipschitz constant of $\rho_\epsilon$. Moreover
	\begin{align*}
		\|\nabla_N^+ a(k,x)\|_{H_N}^2&=\frac1N \sum_{l=-(N-1)/2}^{(N+1)/2} |N^2 \int_{(2l-1)/2N}^{(2l+1)/2N}&
		\rho^{\epsilon}((k-1)/N-y)-\rho^{\epsilon}(x-1/N-y)\\
		& &-\rho^{\epsilon}(k/N-y)+\rho^{\epsilon}(x-y)dy|^2\\
		&\le  \frac1{4N^2}L_{1,\epsilon}^2,
	\end{align*}
	where $L_{1,\epsilon}$ is the Lipschitz constant of $\rho_\epsilon'$. We deduce:
	$$
	|\rho^{\epsilon,N}*_{N}v^N(x)-\rho^\epsilon*v(x)|\le \frac{C_\epsilon}{N}  \|v^N\|_{H^{-1}_N}.
	$$
	Since $v^N$ is bounded in $H^{-1}_N$ and since the supremum norm is larger than the $L^2$ norm, we deduce:
	$$
	\|v^{\epsilon,N}-P_Nv^{\epsilon}\|_{L^2}\le C_\epsilon(\frac{1}{N}+\|v_N-v\|_{H^{-\alpha}}). 
	$$
	
\end{proof}
\begin{lemme}
For all $N\in \mathbb N$, $t\in [0,T]$:
$$
\|w^{\epsilon,N}(t)-P_Nw^{\epsilon}(t)\|_{H^{\beta}_N}	 \le C_\epsilon(\frac1N+\|u_0^N-u_0\|_{L^\infty}+\sup_{[0,T]}. \|v^N(s)-v(s)\|_{H^{-\alpha}}).
$$
and 
$$
\|w^{\epsilon,N}(t)-w^{\epsilon}(t)\|_{L^\infty}	 \le C_\epsilon(\frac1N+\|u_0^N-u_0\|_{L^\infty}+\sup_{[0,T]}. \|v^N(s)-v(s)\|_{H^{-\alpha}}).
$$
\end{lemme}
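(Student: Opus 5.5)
Set $e^N(t)=w^{\epsilon,N}(t)-P_Nw^{\epsilon}(t)$ and compare the two mild formulations inside $H_N$. The key point is that at fixed $\epsilon$ the limit $w^\epsilon$ is smooth: $v^\epsilon=\rho^\epsilon*v$ lies in $C([0,T];H^k)$ for every $k$ with bounds depending on $\epsilon$, and $\rho^\epsilon*u_0$ is smooth. So $w^\epsilon$ is a classical solution, bounded in $C([0,T];H^2)$, or at least in $C([0,T];H^{2-\delta})$ with a time integrable blow-up near $t=0$, by constants $C_\epsilon$.

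Apply $P_N$ to the equation for $w^\epsilon$. This gives
\[
\partial_t P_Nw^\epsilon=\Delta_N P_Nw^\epsilon+P_NR_C(w^\epsilon,v^\epsilon)+(P_N\Delta-\Delta_NP_N)w^\epsilon .
\]
The commutator term is a consistency error. On the shifted grid, $P_N\Delta w$ is the cell-average of $w''$, that is, $N$ times the difference of $w'$ at cell endpoints. $\Delta_NP_Nw$ is a second difference of cell averages. Taylor expansion gives an error of size $C/N\,\|w\|_{C^3}$ in $L^\infty$, or $C/N\,\|w\|_{H^3}$ in $L^2$, so $O(C_\epsilon/N)$. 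It is simpler to write $e^N$ in mild form with $T_N$. One then handles $\int_0^tT_N(t-s)(P_N\Delta-\Delta_NP_N)w^\epsilon\,ds$ with the smoothing of Lemma \ref{prop:regular}. This costs only $(t-s)^{-\beta/2}$ when measured in $H_N^\beta$ from $L^2$, which is integrable.

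Write the Duhamel formula
\[
e^N(t)=T_N(t)\big(\rho^{\epsilon,N}*_Nu_0^N-P_N(\rho^\epsilon*u_0)\big)+\int_0^tT_N(t-s)\big[R_C(w^{\epsilon,N},v^{\epsilon,N})-P_NR_C(w^\epsilon,v^\epsilon)\big]ds+\text{consistency}.
\]
The initial term is bounded in $H_N^\beta$ by the smoothing of the discrete convolution, using a variant of Lemma \ref{rho_N} (an $L^\infty\to H^1_N$ bound with constant $C_\epsilon$). The $u_0^N-u_0$ part then gives $C_\epsilon\|u_0^N-u_0\|_{L^\infty}$, and the discretization mismatch gives $C_\epsilon/N$, as in the proof of Lemma \ref{l5.5}.

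For the reaction term, decompose
\[
R_C(w^{\epsilon,N},v^{\epsilon,N})-P_NR_C(w^\epsilon,v^\epsilon)=\big[R_C(w^{\epsilon,N},v^{\epsilon,N})-R_C(P_Nw^\epsilon,P_Nv^\epsilon)\big]+\big[R_C(P_Nw^\epsilon,P_Nv^\epsilon)-P_NR_C(w^\epsilon,v^\epsilon)\big].
\]
The second bracket is a commutation error between $P_N$ and smooth nonlinear maps. It is bounded in $L^2$ by $C_\epsilon/N$ using \eqref{truc2} and the Lipschitz bound of $b_C$ on $[0,M_1]$. For the first bracket, all functions are bounded in $L^\infty$ by $M_1$, and $v^{\epsilon,N}$, $P_Nv^\epsilon$ are bounded in $L^\infty$ by $C_\epsilon$. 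This gives an $L^2$ bound
\[
C_\epsilon\big(\|e^N\|_{L^2}+\|v^{\epsilon,N}-P_Nv^\epsilon\|_{L^2}\big).
\]
Lemma \ref{l5.5} then bounds the $v$ term by $C_\epsilon(1/N+\|v^N-v\|_{H^{-\alpha}})$. Apply $T_N(t-s)$ from $L^2$ to $H^\beta_N$ with the factor $(t-s)^{-\beta/2}$, and close with the singular Gronwall argument used in Proposition \ref{prop:uNwN}. This gives the $H^\beta_N$ bound.

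For the $L^\infty$ estimate, repeat the argument using the $L^\infty$ contraction of $T_N$ and $L^\infty$ bounds on the brackets. The $v$ difference in $L^\infty$ is controlled pointwise exactly as in the proof of Lemma \ref{l5.5}, which was in fact a sup-norm estimate. Finally pass from $P_Nw^\epsilon$ to $w^\epsilon$ using $\|(I-P_N)w^\epsilon\|_\infty\le C_\epsilon/N$, by the Lipschitz property of $w^\epsilon$.

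The main obstacle is the consistency term $(P_N\Delta-\Delta_NP_N)w^\epsilon$ uniformly up to $t=0$. It needs $\epsilon$-dependent regularity of $w^\epsilon$ in $C([0,T];C^3)$, or at least in $H^{2+\delta}$. I would obtain this by bootstrapping the mild formula, since $v^\epsilon$ and $\rho^\epsilon*u_0$ are smooth with $C_\epsilon$ bounds and $b_C\in\mathcal C^1$. If $\mathcal C^1$ regularity of $b_C$ limits the bootstrap to $H^{2}$-type regularity, I would instead aim for an error of order $N^{-\kappa}$ for some $\kappa>0$. That still suffices for the later convergence argument, with $1/N$ replaced by $N^{-\kappa}$.
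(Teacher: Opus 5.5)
Your proposal follows the paper's proof essentially step by step: you use the mild formulation with $T_N$ for $w^{\epsilon,N}-P_Nw^{\epsilon}$ and the same splitting of the reaction term. That splitting has a Lipschitz part, the $v^{\epsilon,N}-P_Nv^\epsilon$ part controlled by Lemma \ref{l5.5}, and $P_N$-commutation errors of order $C_\epsilon/N$ via \eqref{truc2}; you then add a Taylor consistency estimate for $(P_N\Delta-\Delta_NP_N)w^\epsilon$, the $(t-s)^{-\beta/2}$ smoothing with Gr\"onwall, and finally $\|(I-P_N)w^\epsilon\|_\infty\le C_\epsilon/N$.

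On the point you flag, the paper simply invokes a $C_\epsilon$ bound on $\partial_x^4 w^\epsilon$, which $b_C\in\mathcal{C}^1$ does not guarantee. You do not need the $N^{-\kappa}$ fallback either. Indeed $(P_N\Delta-\Delta_NP_N)w^\epsilon=-\nabla_N^-G$, where $G$ is $\partial_xw^\epsilon$ at the cell interfaces minus its triangular average of width $2/N$. Hence $\|G\|_\infty\le\|\partial_x^2w^\epsilon\|_\infty/N$, and the consistency term is $O(C_\epsilon/N)$ in $H_N^{-1}$. Smoothing from $H_N^{-1}$ into $H_N^\beta$ costs only the integrable factor $(t-s)^{-(1+\beta)/2}$; into $H_N^{1/2+\delta}\subset L^\infty$ with $\delta<1/2$ it costs $(t-s)^{-(3/2+\delta)/2}$. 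The needed bound $w^\epsilon\in L^\infty(0,T;W^{2,\infty})$ with $C_\epsilon$ constants follows from two iterations of the mild formula using only $b_C\in\mathcal{C}^1$.
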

\begin{proof}
	Let us introduce $\widetilde w^{\epsilon,N}=P_N w^{\epsilon}$ and $r^{\epsilon,N}=w^{\epsilon,N}-
	\widetilde w^{\epsilon,N}$. Then:
	\begin{align*}
		r^{\epsilon,N}(t) =& T_N(t)(\rho^{\epsilon,N} *_N u^N_0-P_N \rho^\epsilon * u_0) + \int_0^t T_N(t-s)\bigg[b_C(w^{\epsilon,N}(s)) - P_Nb_C(w^\epsilon(s)) \\
		& + a_C(w^{\epsilon,N}(s) v^{\epsilon,N}(s)-P_N(w^\epsilon (s) v^\epsilon(s))) + d_C(v^{\epsilon,N}(s)-P_N v^\epsilon(s))\bigg]\, ds\\
		&+ \int_0^t T_N(t-s)\bigg[(\Delta_N P_N - P_N\Delta)w^\epsilon \bigg]\, ds.
	\end{align*}
	Again, since $w^{\epsilon,N}$ and $w^\epsilon$ are uniformly bounded in the supremum norm, we may assume that $b_C$ is Lipschitz and write, thanks to Equation \eqref{truc2},
	
\begin{align*}
\|b_C(w^{\epsilon,N}(s)) - P_Nb_C(w^\epsilon(s))\|_{H_N}
\le & C \|r^{\epsilon,N}(s))\|_{H_N} + \|(I - P_N)b_C(w^\epsilon(s))\|_{H_N}\\
& + \|(I - P_N)w^\epsilon(s)\|_{H_N}\\
\le & C \|r^{\epsilon,N}(s))\|_{H_N} + \frac{C}{N} \|w^\epsilon(s)\|_{H^1} \\
\le & C \|r^{\epsilon,N}(s))\|_{H_N} + \frac{C_\epsilon}{N}.
\end{align*}

Let us estimate the second term in the integral above:
\begin{align*}
\|w^{\epsilon,N} v^{\epsilon,N}-P_N(w^\epsilon v^\epsilon)\|_{H_N}
\le & \|r^{\epsilon,N} v^{\epsilon,N}\|_{H_N}+\|\widetilde w^{\epsilon,N} (v^{\epsilon,N}-P_Nv^\epsilon)\|_{H_N}\\
&+\|\widetilde w^{\epsilon,N} P_Nv^\epsilon- P_N(w^\epsilon v^\epsilon)\|_{H_N}\\
\le & C_\epsilon \|r^{\epsilon,N} \|_{H_N}+C_\epsilon \left(\frac{1}{N}+\|v^N-v\|_{H^{-\alpha}}\right)\\
&+\|\widetilde w^{\epsilon,N} P_Nv^\epsilon- P_N(w^\epsilon v^\epsilon)\|_{H_N}.
\end{align*}

We have used Lemma \ref{l5.5} and that the discrete and continuous convolution have a smoothing effect uniform in $N$ so that $v^{\epsilon,N}$ and $\widetilde w^{\epsilon,N}$ are uniformly bounded with respect to $N$ in $H^1$ and $H^1_N$. Moreover, since $w^\epsilon$ and $v^\epsilon$ are smooth, it can be checked that 
	$$
	\|\widetilde w^{\epsilon,N} P_Nv^\epsilon- P_N(w^\epsilon v^\epsilon)\|_{H_N}\le \frac{C}{N}\|(w^\epsilon)'\|_{L^\infty} \|v^\epsilon\|_{L^\infty}\le \frac{C_\epsilon}N.
	$$
	Moreover, on $[(2l-1)/N,(2l+1)/N)$:
$$
(P_N\Delta -\Delta-NP_N)w^\epsilon=N\int_{(2l-1)/2N}^{(2l+1)/N} (w^\epsilon)''(x)-N^2(w^\epsilon(x-1/N)+w^\epsilon(x+1/N)-2w(x))dx.
$$ 
One can check by Taylor formula that:
$$
|(w^\epsilon)''(x)-N^2(w^\epsilon(x-1/N)+w^\epsilon(x+1/N)-2w(x))|\le C \|(w^{(4)})^\epsilon\|_{L^\infty}/N^2\le C_\epsilon/N^2.
$$
Hence
$$
\|(P_N\Delta -\Delta-NP_N)w^\epsilon\|_{H_N}\le C_\epsilon/N^2.
$$
	It follows, thanks to the smoothing property of the discrete heat kernel,
\begin{align*}
\|r^{\epsilon,N}(t)\|_{H^{\beta}_N}	 
\le & \|\rho^{\epsilon,N} *_N u^N_0-P_N \rho^\epsilon * u_0\|_{H^{\beta}_N}\\
& + C_\epsilon \int_0^t |t-s|^{-\beta/2}\bigg[\|r^{\epsilon,N}(s))\|_{L^2} + \frac1{N}+\|v^N(s)-v(s)\|_{H^{-\alpha}}\bigg]\, ds.
\end{align*}

By similar arguments as in Lemma \ref{l5.5}: 
\[\|\rho^{\epsilon,N} *_N u^N_0-P_N \rho^\epsilon * u_0\|_{H^{\beta}_N}\le C_\epsilon\left(\frac1N+\|u_0^N-u_0\|_{L^\infty}\right),\]

so that, since $\|r^{\epsilon,N}(t)\|_{H_N}\le \|r^{\epsilon,N}(t)\|_{H^{\beta}_N}$, we have by Gr\"onwall's inequality:
\[\|r^{\epsilon,N}(t)\|_{H^{\beta}_N}	\le C_\epsilon\left(\frac1N+\|u_0^N-u_0\|_{L^\infty}+\sup_{[0,T]}. \|v^N(s)-v(s)\|_{H^{-\alpha}}\right).\]

By similar arguments, we have:
\[\|r^{\epsilon,N}(t)\|_{L^\infty} \le C_\epsilon\left(\frac1N+\|u_0^N-u_0\|_{L^\infty}+\sup_{[0,T]} \|v^N(s)-v(s)\|_{H^{-\alpha}}\right).\]

Finally, since $\|\widetilde w^{\epsilon,N}-w^{\epsilon}\|_{L^\infty}\le \frac{C_\epsilon}N$, we deduce the result.
\end{proof}

Gathering the above results proves the Theorem choosing first $\epsilon$ small and the $N$ large since under the assumptions 
on the initial data $\|u_0^N-\rho_\epsilon^N *_N u_0^N\|_{L^\infty}\to 0$ and $\|u_0^N-\rho_\epsilon^N *_N u_0^N\|_{H^\beta_N}\to 0$ uniformly in $N$ when $\epsilon \to 0$.

\subsection{Uniqueness of the limit point and conclusion}

According to Proposition \ref{prop:uNwN} ad Theorem \ref{prop:wNw}, it follows that $\bu^N$ converges in probability to $w$, on $\DD\left([0,T], L^\infty\cap H_N^\beta\right)$,  for the uniform topology. Therefore, we can take the limit in the equation defining $\bv^N$. It results that the limit point $v$ satisfies the ODE-part of System \eqref{eq:uv}.

\begin{lemme}
Assume that $(v^N_0)_{N\geq 1}$ converges to $v_0$ in $H^{-\alpha}$. Then, the limit point $v$ satisfies 
\[\partial_t v(t) = R_D(w(t), v(t)),\, v(0) = v_0,\]
with $w$ the solution of Equation \eqref{eq_w}.
\end{lemme}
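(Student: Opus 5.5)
We pass to the limit in the integral equation satisfied by $\bv^N$, working in $H^{-\alpha}$ (or any weaker negative space where the error terms can be controlled) on the probability space given by Skorokhod's representation. There, $\bv^N\to v$ almost surely in $\CC([0,T],H^{-\alpha})$; the $\CC$-tightness ensures the limit is continuous, and the uniform topology is legitimate. Recall that for all $t\in[0,T]$
\[
\bv^N(t) = v^N_0 + \int_0^t \big(d_D\bv^N(s) + b_D(\bu^N(s))\big)\, ds + Z^N_D(t\wedge\tau_N).
\]
We treat each term separately and show that the equation converges, in probability in $\CC([0,T],H^{-\alpha})$, to
\[
v(t)=v_0+\int_0^t \big(d_Dv(s)+b_D(w(s))\big)\,ds .
\]

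\textbf{Step 1 (the easy terms).} By Assumption~\ref{ass:u0v0}(iii), $v^N_0\to v_0$ in $H^{-\alpha}$. By Theorem~\ref{prop:saut_disc} with $\tilde\alpha=\alpha$, we have $\E\big[\sup_t\|Z^N_D(t\wedge\tau_N)\|^2_{H^{-\alpha}}\big]\le CN^{-2\alpha}\to0$, so the martingale term vanishes in probability uniformly in time. For the linear term,
\[
\sup_t\Big\|\int_0^t d_D(\bv^N-v)\,ds\Big\|_{H^{-\alpha}}\le T|d_D|\sup_t\|\bv^N-v\|_{H^{-\alpha}}\to0
\]
almost surely.

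\textbf{Step 2 (the nonlinear term $b_D(\bu^N)$).} By Proposition~\ref{prop:uNwN} and Theorem~\ref{prop:wNw}, $\bu^N\to w$ in probability in $\CC([0,T]\times[0,1])$ in the $L^\infty$ sense, i.e.\ $\sup_t\|\bu^N(t)-w(t)\|_\infty\to0$. Since all these functions take values in $[0,M+2]$ and $b_D$ is $\CC^1$, hence Lipschitz on this compact set with constant $L$, we get
\[
\|b_D(\bu^N(s))-b_D(w(s))\|_{H^{-\alpha}}\le\|b_D(\bu^N(s))-b_D(w(s))\|_{L^2}\le L\|\bu^N(s)-w(s)\|_\infty .
\]
Integrating in time, this term converges to $0$ in probability, uniformly in $t$. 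One subtlety is that $b_D(\bu^N)$ lives in $H_N$ while $b_D(w)$ is a continuous function. The inequality above is nonetheless valid in $L^2\subset H^{-\alpha}$, using the equivalence of discrete and continuous negative norms on $H_N$ (Lemma 2.9(d)) where needed.

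\textbf{Step 3 (identification).} Combining Steps 1 and 2, the right-hand side of the $\bv^N$ equation converges in probability to $v_0+\int_0^t (d_Dv+b_D(w))\,ds$, while the left-hand side converges to $v(t)$. Extracting an almost surely convergent subsequence identifies the two limits, giving the integral equation above. Since $w\in\CC([0,T],L^\infty)$ and $v\in\CC([0,T],H^{-\alpha})$, the integrand is continuous in time with values in $H^{-\alpha}$. Hence $v$ is differentiable in $H^{-\alpha}$ with $\partial_tv=R_D(w,v)$ and $v(0)=v_0$.

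\textbf{Main obstacle.} The delicate point is Step 2: making sure $w$ is the limit of $\bu^N$ along the \emph{same} subsequence on which $\bv^N\to v$. The function $w$ in \eqref{eq:wN}/\eqref{eq_w} is defined with this particular $v$. This is why Skorokhod's representation must be applied jointly, and Proposition~\ref{prop:uNwN} invoked on that probability space (the $Y^N$ estimates are distributional, so they transfer). A second point is that $\tau_N$ appears in $Z^N_D$. This is harmless, because we work with the truncated process $\bv^N$ throughout; removing the truncation is deferred to the final conclusion.
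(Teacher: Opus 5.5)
Your proposal is correct and follows the paper's route. You pass to the limit term by term in the integral equation for $\bv^N$. The only nontrivial term, $b_D(\bu^N)$, is handled by the local Lipschitz property of $b_D$ together with $\sup_t\|\bu^N-w^N\|_\infty\to 0$ (Proposition~\ref{prop:uNwN}) and $w^N\to w$ (Theorem~\ref{prop:wNw}), and the martingale term is disposed of by Theorem~\ref{prop:saut_disc}. The paper leaves implicit that Skorokhod's representation must be applied jointly, so that $w$ is built from the same limit point $v$; you handle this soundly.
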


\begin{proof}
For all $N\geq1$ and $0\leq t\leq T$, we have
\[\bv^N(t) = v^N(0) + \int_0^t d_D\bv^N(s) + b_D(\bu^N(s))\, ds+Z^N_D(t\wedge \tau).\]

As $v$ is a limit point of $\bv^N$ and by assumption on the initial data, is is sufficient to show that $b_D(\bu)$ converge for $H^{-\alpha}$. Actually, we have 
\begin{align*}
\sup_{0\leq t\leq T}\|b_D(\bu^N(t)) - b_D(w^N(t))\|_{H^{-\alpha}}	
\leq & \sup_{0\leq t\leq T}\|b_D(\bu^N(t)) - b_D(w^N(t))\|_{\infty}\\	
\leq & C\sup_{0\leq t\leq T}\|\bu^N(t) - w^N(t)\|_{\infty},
\end{align*}	
as $b_D$ is locally Lipschitz continuous, and $\bu^n$ and $w^N$ are bounded. This ends the proof.
\end{proof}

To ensure that $\bv^N$ admits only one limit point, we need to show that the equation \eqref{eq:uv} has only one solution in $\CC\left([0,T], (L^\infty\cap H^\beta)\times H^{-\alpha}\right)$.
\begin{lemme}
For all $(u_0,v_0)\in (L^\infty\cap H^\beta) \times H^{-\alpha}$, there exists only one solution to the system \eqref{eq:uv}, with periodic boundary conditions and initial condition $u(0) = u_0$ and $v(0) = v_0$, in $\CC\left([0,T], (L^\infty\cap H^\beta)\times H^{-\alpha}\right)$.
\end{lemme}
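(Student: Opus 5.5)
Existence is already essentially available, so the plan concentrates on uniqueness. For existence, I would reuse the fixed point construction of Proposition \ref{prop_w}, now applied to the coupled system. Since $v$ solves the linear ODE $\partial_t v = d_D v + b_D(u)$, it is given explicitly by
\[v(t) = e^{d_Dt}v_0 + \int_0^t e^{d_D(t-s)}b_D(u(s))\,ds.\]
Substituting this into the mild formulation for $u$ gives a single equation with memory. The map
\[u\mapsto e^{\Delta t}u_0 + \int_0^t e^{\Delta(t-s)}R_C^M\bigl(u(s),v[u](s)\bigr)\,ds\]
is then handled on $\CC([0,T],H^\beta\cap L^\infty)$ exactly as before, with $b_C$ and $b_D$ truncated. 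The maximum principle together with Assumption \ref{ass:RC} removes the truncation and yields $\sup_t\|u(t)\|_\infty\le M$. (Alternatively, existence is given by the limit points constructed in this section.)

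For uniqueness, let $(u_1,v_1)$ and $(u_2,v_2)$ be two solutions in $\CC([0,T],(L^\infty\cap H^\beta)\times H^{-\alpha})$ with the same initial data. Set $K=\max_i\sup_t\|u_i(t)\|_\infty$, which is finite by assumption, so that $b_C$ and $b_D$ are Lipschitz on $[0,K]$ with constant $L$. Let $\delta u=u_1-u_2$ and $\delta v=v_1-v_2$.

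From the ODE,
\[\|\delta v(t)\|_{H^{-\alpha}} \le \int_0^t |d_D|\,\|\delta v(s)\|_{H^{-\alpha}} + \|b_D(u_1(s))-b_D(u_2(s))\|_{L^2}\,ds,\]
and the last norm is bounded by $L\|\delta u(s)\|_{H^\beta}$.

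For $\delta u$, write the mild form and split the product as
\[u_1v_1-u_2v_2=\delta u\, v_1 + u_2\,\delta v.\]
Choose $\gamma\in(\alpha+1/2-\beta,\,3/2)$ with $(\gamma+\beta)/2<1$; this is possible since $\beta>\alpha$. The continuous analogue of Corollary \ref{prop:prod-} gives
\[\|\delta u\, v_1\|_{H^{-\gamma}}\le C\|\delta u\|_{H^\beta}\|v_1\|_{H^{-\alpha}},\qquad
\|u_2\,\delta v\|_{H^{-\gamma}}\le C\|u_2\|_{H^\beta}\|\delta v\|_{H^{-\alpha}}.\]
Here $\sup_t\|v_1\|_{H^{-\alpha}}$ and $\sup_t\|u_2\|_{H^\beta}$ are finite because both solutions lie in the continuous-in-time spaces. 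The smoothing estimate of the heat semigroup then gives
\[\|\delta u(t)\|_{H^\beta}\le C\int_0^t (t-s)^{-(\gamma+\beta)/2}\bigl(\|\delta u(s)\|_{H^\beta}+\|\delta v(s)\|_{H^{-\alpha}}\bigr)\,ds.\]
The terms $b_C(u_1)-b_C(u_2)$ and $d_C\delta v$ are absorbed into the same bound.

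Set $\phi(t)=\sup_{s\le t}\bigl(\|\delta u(s)\|_{H^\beta}+\|\delta v(s)\|_{H^{-\alpha}}\bigr)$. Combining the two estimates gives
\[\phi(t)\le C\int_0^t (t-s)^{-(\gamma+\beta)/2}\phi(s)\,ds.\]
The singular Gronwall lemma, applied through Hölder as in the proof of Proposition \ref{prop:uNwN}, yields $\phi\equiv0$. The $L^\infty$ part needs no separate argument, since $u_1=u_2$ already follows from the vanishing of $\phi$.

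The main obstacle is the product $u_2\,\delta v$. It must be controlled with $\delta v$ only in $H^{-\alpha}$, which is why $u$ is required in $H^\beta$ with $\beta>\alpha$, and why we need the exponent condition $(\gamma+\beta)/2<1$ while $\gamma>\alpha+1/2-\beta$. I would check carefully that the continuous product rule applies in the regime $\beta<1/2$. If it does not, I would fall back on taking $\gamma$ close to $3/2$, using only $\gamma>\alpha+1/2-\beta$ together with the $L^\infty$ bound on $u_2$.
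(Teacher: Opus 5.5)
Your proof is correct and follows the paper's uniqueness argument: the mild formulation, the splitting of $u_1v_1-u_2v_2$ controlled by the product rule $H^\beta\times H^{-\alpha}\to H^{-\gamma}$ for $\gamma>\alpha+1/2-\beta$, heat-semigroup smoothing, and the singular Gr\"onwall inequality via H\"older. The only difference is that you bound the Lipschitz terms $b_C(u_1)-b_C(u_2)$ and $b_D(u_1)-b_D(u_2)$ in $L^2$ by $\|\delta u\|_{H^\beta}$, which closes the estimate in $H^\beta\times H^{-\alpha}$ alone and skips the paper's separate $L^\infty$ estimate through $H^{1/2+\varepsilon}$ (the Lipschitz range should be $[-K,K]$ rather than $[0,K]$, since nonnegativity of $u_i$ is not assumed, and your stated fallback would fail because an $L^\infty$ bound on $u_2$ cannot control $u_2\,\delta v$ for $\delta v\in H^{-\alpha}$, but the fallback is unnecessary since the product rule does apply).
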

\begin{proof}
Let $(u_1,v_1)$ and $(u_2, v_2)$ two solutions of Equation \eqref{eq:uv} in $\CC\left([0,T], (L^\infty\cap H^\beta)\times H^{-\alpha}\right)$, with the same initial data $(u_0,v_0)$. Let $0<\varepsilon,\gamma$, such that $1/2+\varepsilon+\gamma<2$ and $-\gamma<1/2+\varepsilon-\alpha-1/2$. For all $0\leq t\leq T$, we have

\begin{align*}
\|u_1(t)-u_2(t)\|_{H^\beta} 
\leq & \int_0^t\| e^{\Delta (t-s)}a_c(u_1(s)v_1(s)-u_2(s)v_2(s)))\|_{H^\beta}\, ds\\
&+ \int_0^t\| e^{\Delta (t-s)}d_C(v_1(s)-v_2(s)))\|_{H^\beta}\, ds\\
& +\int_0^t\| e^{\Delta (t-s)}( b_C(u_1(s))-b_C(u_2(s)))\|_{H^\beta}\, ds\\
\leq & C\int_0^t (t-s)^{-(\beta+\gamma)/2}\|u_1(s)v_1(s)-u_2(s)v_2(s)\|_{H^{-\gamma}}\, ds\\
& + C\int_0^t (t-s)^{-(\beta+\alpha)/2}\|v_1(s)-v_2(s)\|_{H^{-\alpha}}\, ds\\
& +C\int_0^t (t-s)^{-\beta/2}\|b_C(u_1(s))-b_C(u_2(s))\|_{L^2}\, ds\\
\leq & C\int_0^t (t-s)^{-(\beta+\gamma)/2}\|u_1(s)\|_{H^{\beta}}\|v_1(s)-v_2(s)\|_{H^{-\alpha}}\, ds\\
& + C\int_0^t (t-s)^{-(\beta+\gamma)/2}\|(u_1(s)-u_2(s)\|_{H^{\beta}}\|v_2(s)\|_{H^{-\alpha}}\, ds\\
& + C\int_0^t (t-s)^{-(\beta+\alpha)/2}\|v_1(s)-v_2(s)\|_{H^{-\alpha}}\, ds\\
& +C\int_0^t (t-s)^{-\beta/2}\|u_1(s)-u_2(s)\|_{\infty}\, ds\\
\leq & + C\int_0^t (t-s)^{-(\beta+\gamma)/2}\|(u_1(s)-u_2(s)\|_{H^{\beta}}\, ds\\
& + C\int_0^t (t-s)^{-(\beta+\alpha)/2}\|v_1(s)-v_2(s)\|_{H^{-\alpha}}\, ds\\
& +C\int_0^t (t-s)^{-\beta/2}\|u_1(s)-u_2(s)\|_{\infty}\, ds\\
\end{align*} 
For the norm $\|\cdot\|_\infty$, we obtain 
\begin{align*}
\|u_1(t)-u_2(t)\|_{\infty} 
\leq & + C\int_0^t (t-s)^{-(1/2+\varepsilon+\gamma)/2}\|(u_1(s)-u_2(s)\|_{H^{\beta}}\, ds\\
& + C\int_0^t (t-s)^{-(1/2+\varepsilon+\alpha)/2}\|v_1(s)-v_2(s)\|_{H^{-\alpha}}\, ds\\
& +C\int_0^t (t-s)^{-(1/2+\varepsilon)/2}\|u_1(s)-u_2(s)\|_{\infty}\, ds\\
\end{align*}
Lastly, we control the difference between $v_1$ and $v_2$ with
\[\|v_1(t)-v_2(t)\|_{H^{-\alpha}}
\leq   C\int_0^t \|v_1(s)-v_2(s)\|_{H^{-\alpha}}\, ds
 +C\int_0^t \|u_1(s)-u_2(s)\|_{\infty}\, ds\]
Then, we have proved that
\begin{align*}
&\|(u_1(t),v_1(t))-(u_2(t), v_2(t))\|_{(L^\infty\cap H^\beta)\times H^{-\alpha}} \\
&\quad\le C\int_0^t(t-s)^{-(1/2+\varepsilon+\gamma)/2}\|(u_1(s),v_1(s))-(u_2(s), v_2(s))\|_{(L^\infty\cap H^\beta)\times H^{-\alpha}}\, ds.
\end{align*} 
The proof is concluded by a singular version of Gr\"onwall's inequality, as in Proposition \ref{prop:uNwN}.
\end{proof}

Now, we have the last arguments to conclude the proof of our main result.

\begin{proof}[Proof of Theorem \ref{prop:main}]
Let $(u,v)$ the unique solution of Equation \eqref{eq:uv}, associated to $(u_0,v_0)$. The sequence $(\bv^N)_N$ is tight and from the previous results, it admits a unique limit point, $v$. Then, $\bv^N$ converges in distribution to this unique limit point. Therefore, it implies that $\bu^N$ converges to $u$. And as the limit $(u,v)$ is deterministic, we prove that $(\bu^N, \bv^N)$ converges in probability to $(u,v)$. 

To conclude, let us prove the convergence without truncation. For all $0<\varepsilon<1$, on the event 
\[\left\{\sup_{0\leq t\leq T}\|(\bu^N(t), \bv^N(t)) - (u(t), v(t))\|_{(L^\infty\cap H_N^\beta)\times H^{-\alpha}} \leq \varepsilon\right\}\]
we have $\sup_{0\leq t\leq T}\|(\bu^N(t), \bv^N(t))\|_{(L^\infty\cap H_N^\beta)\times H^{-\alpha}}\leq M+1$. This implies that $\tau_N\geq T$, and that $(u^N, v^N)$ is well-defined on $[0,T]$ and is equal to the process $(\bu^N, \bv^N)$. Hence, we have
\begin{align*}
&\P\left( \sup_{0\leq t\leq T}\|(u^N(t), v^N(t)) - (u(t), v(t))\|_{(L^\infty\cap H_N^\beta)\times H^{-\alpha}}>\varepsilon\right) \\
& \leq \P\left( \sup_{0\leq t\leq T}\|(\bu^N(t), \bv^N(t)) - (u(t), v(t))\|_{(L^\infty\cap H_N^\beta)\times H^{-\alpha}}>\varepsilon\right)\\
& \xrightarrow[N\to\infty]{} 0.\\
\end{align*}
This ends the proof.
\end{proof}
\appendix
\section{Product rules for discrete Sobolev space}
\label{sec:prod}
The goal of this section is to derive the product rules for discrete Sobolev spaces. 

For $u\in H_N$, we define its discrete (complex) Fourier coefficients
\[\widehat{u}(k) = \frac{1}{N}\sum_{l=0}^{N-1} u(l)e^{-2i\pi lk/N},\quad -(N-1)/2\leq k\leq (N-1)/2.\]
These Fourier coefficient are related to the decomposition in the eigenbasis $(\varphi_{m,N},\psi_{m,N})$ by
\[|\widehat{u}(m)|^2 + |\widehat{u}(-m)|^2 = \langle u, \varphi_{m,N}\rangle^2 + \langle u, \psi_{m,N}\rangle^2,\quad 0\leq m\leq (N-1)/2.\]
Then, the discrete Sobolev norms can be expressed using the Fourier coefficients. 

\begin{lemme}
For all $\beta>1/2$, there exists $c>0$ such that for all $u\in H_N$, \[\|u\|_{\infty}\leq \|u\|_{H_N^\beta}.\]
\end{lemme}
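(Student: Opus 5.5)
The plan is a discrete Sobolev embedding proved by Fourier inversion and Cauchy--Schwarz; the statement should be read with a constant, i.e. $\|u\|_\infty\le c\|u\|_{H_N^\beta}$ with $c$ independent of $N$. First, I will write the discrete Fourier inversion for $u\in H_N$:
\[u(l)=\sum_{k=-(N-1)/2}^{(N-1)/2}\widehat{u}(k)e^{2i\pi lk/N},\qquad 0\le l\le N-1.\]
This follows from $\frac1N\sum_{l}e^{2i\pi l(k-k')/N}=\delta_{k,k'}$ for $|k|,|k'|\le (N-1)/2$. It gives, for every $l$,
\[|u(l)|\le \sum_{k}|\widehat u(k)|.\]

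Next, I insert the weights $(1+\lambda_{|k|,N})^{\pm\beta/2}$ and apply Cauchy--Schwarz:
\[\sum_k|\widehat u(k)|\le\Big(\sum_k(1+\lambda_{|k|,N})^{-\beta}\Big)^{1/2}\Big(\sum_k(1+\lambda_{|k|,N})^{\beta}|\widehat u(k)|^2\Big)^{1/2}.\]
The identity $|\widehat{u}(m)|^2+|\widehat{u}(-m)|^2=\langle u,\varphi_{m,N}\rangle^2+\langle u,\psi_{m,N}\rangle^2$ recalled above turns the second factor into exactly $\|u\|_{H_N^\beta}$. The only normalisation check is the $m=0$ term, where $\widehat u(0)=\langle u,\varphi_{0,N}\rangle$.

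It remains to show that the first factor is bounded uniformly in $N$, which is the only point needing care. By Lemma 2.9(c) of \cite{Blount91} (the first lemma of the paper), $\lambda_{m,N}\ge c^{-1}\lambda_m=c^{-1}(2\pi m)^2$ for $1\le m\le (N-1)/2$, with $c$ independent of $N$. Alternatively one can use $\sin(x)\ge 2x/\pi$ on $[0,\pi/2]$ directly in $\lambda_{m,N}=4N^2\sin^2(\pi m/N)$. Either way,
\[\sum_k(1+\lambda_{|k|,N})^{-\beta}\le 1+2\sum_{m\ge1}(1+c^{-1}4\pi^2m^2)^{-\beta}<\infty.\]
This series converges exactly because $2\beta>1$, which is where the hypothesis $\beta>1/2$ is used. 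Taking the supremum over $l$ then gives $\|u\|_\infty\le c_\beta\|u\|_{H_N^\beta}$ with $c_\beta$ independent of $N$.
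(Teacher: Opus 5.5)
Your proof is correct and follows essentially the same route as the paper. Both use the discrete Fourier inversion, Cauchy--Schwarz with Sobolev weights, and the lower bound $\lambda_{m,N}\ge c^{-1}m^2$ (uniform in $N$), which makes the dual sum finite exactly when $\beta>1/2$. The only difference is cosmetic: you weight by $(1+\lambda_{|k|,N})^{\beta}$ rather than the paper's $(1+|k|^2)^{\beta}$, so the second factor is exactly $\|u\|_{H_N^\beta}$. You are also right that the inequality should carry the constant $c$, which the statement omits.
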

\begin{proof}
For $0\leq j\leq N-1$, the inverse Fourier formula states that 
\[u(j) = \sum_{k=-(N-1)/2}^{(N-1)/2}\widehat{u}(k) e^{2i\pi kj/N}.\]
Then, from Cauhy-Schwartz inequality, we have
\begin{align*}
|u(j)|
&\leq \left(\sum_{k=-(N-1)/2}^{(N-1)/2}(1+|k|^2)^{\beta}|\widehat{u}(k)|^2\right)^{1/2}\left(\sum_{k=-(N-1)/2}^{(N-1)/2}(1+|k|^2)^{-\beta}\right)^{1/2}\\
&\leq c\left(\sum_{k=0}^{(N-1)/2}(1+|k|^2)^{\beta}\left(\langle u, \varphi_{k,N}\rangle^2 + \langle u, \psi_{k,N}\rangle^2\right)\right)^{1/2}.
\end{align*}
To conclude, the eigenvalue $\lambda_{k,N}$ are equivalent to $k^2$, uniformly in $N$. Therefore, we have proved that $|u(j)|\leq c\|u\|_{H_N^\beta}$, for all $0\leq j\leq N-1$.
\end{proof}

As an immediate consequence, we have a first product rule for $H_N^0\times H_N^\beta\to H_N^0$, for $\beta>1/2$, and its dual version, $H_N^0\times H_N^0\to H_N^{-\beta}$.

\begin{prop}\label{lmm:00b}
For all $\beta>1/2$, there exists $c>0$ such that for all $u,v\in H_N$ \[\|uv\|_{H_N^0}\leq c\|u\|_{H_N^0}\|v\|_{H_N^\beta},\quad\text{and}\quad\|uv\|_{H_N^{-\beta}}\leq c\|u\|_{H_N^0}\|v\|_{H_N^0}.\]
\end{prop}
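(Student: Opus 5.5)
The first inequality follows from the preceding lemma, and the second from the first by duality. Fix $\beta>1/2$ and $u,v\in H_N$.

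\emph{Direct rule.} We bound pointwise, $|u(x)v(x)|\le |u(x)|\,\|v\|_\infty$ for every $x$. Squaring and integrating gives $\|uv\|_{H_N^0}\le \|v\|_\infty\|u\|_{H_N^0}$, since $H_N^0$ is just the restriction of the $L^2$ norm. The preceding lemma gives $\|v\|_\infty\le c\|v\|_{H_N^\beta}$, with $c$ independent of $N$. Combining the two yields the first inequality. The only point to watch is that the constant of the embedding $H_N^\beta\hookrightarrow L^\infty$ does not depend on $N$. This holds because $\lambda_{k,N}$ is comparable to $k^2$ uniformly in $N$ (item (c) of the lemma from \cite{Blount91}), so the sum $\sum_k(1+\lambda_{k,N})^{-\beta}$ is bounded by the convergent series $\sum_k (1+c^{-1}k^2)^{-\beta}$.

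\emph{Dual rule.} We use that, on $H_N$, $H_N^{-\beta}$ is the dual of $H_N^{\beta}$ for the pairing $\langle\cdot,\cdot\rangle$. This is immediate from the definition of the norms in the orthonormal basis $\{\varphi_{m,N},\psi_{m,N}\}$ and Cauchy--Schwarz with weights $(1+\lambda_{m,N})^{\pm\beta/2}$:
\[
\|f\|_{H_N^{-\beta}}=\sup_{\|\phi\|_{H_N^\beta}\le 1}\langle f,\phi\rangle .
\]
For $\phi\in H_N$ with $\|\phi\|_{H_N^\beta}\le1$, we write $\langle uv,\phi\rangle=\langle u, v\phi\rangle$. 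Cauchy--Schwarz in $L^2$ then gives
\[
|\langle uv,\phi\rangle|\le \|u\|_{H_N^0}\|v\phi\|_{H_N^0}.
\]
Applying the direct rule to the product $v\phi$ gives $\|v\phi\|_{H_N^0}\le c\|v\|_{H_N^0}\|\phi\|_{H_N^\beta}\le c\|v\|_{H_N^0}$. Taking the supremum over $\phi$ yields $\|uv\|_{H_N^{-\beta}}\le c\|u\|_{H_N^0}\|v\|_{H_N^0}$.

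Neither step presents a real obstacle. The one thing to check carefully is the exact duality identity for the discrete norms, with the supremum attained by $\phi=\sum_m (1+\lambda_{m,N})^{-\beta}\langle f,e_{m,N}\rangle e_{m,N}$ suitably normalised. Checking it also confirms that no $N$-dependent constant enters.
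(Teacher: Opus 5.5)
Your proof is correct and follows the paper's own argument. The first inequality comes from the $N$-uniform embedding $\|v\|_\infty\le c\|v\|_{H_N^\beta}$, and the second follows by duality, writing $\langle uv,\phi\rangle=\langle u,v\phi\rangle$ and applying the first inequality to $v\phi$. You also make explicit two points the paper leaves implicit: why the embedding constant is uniform in $N$, and the exact duality identity for the discrete norms.
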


\begin{proof}
The first inequality is direct. Let $u,v,\varphi\in H_N$, then, we have
\[\langle uv,\varphi\rangle \leq \|u\|_{H_N^0}\|v\varphi\|_{H_N^0}\leq c\|u\|_{H_N^0}\|v\|_{H_N^0}\|\varphi\|_{H_N^\beta}.\]
Hence the result.
\end{proof}

Then, we establish that $H_N^\beta$ is an algebra, for $\beta>1/2$. We follow the ideas from \cite{BF} and \cite{BK}. In order to do so, we introduce the trigonometric interpolation $I_N : H_N\to L^2$ as 
\[I_N(u) (x) = \sum_{k=-(N-1)/2}^{(N-1)/2} \widehat{u}(k) e^{2i\pi kx}.\]
It is the only trigonometric polynomial of degree smaller than $(N-1)/2$, which interpolates $u$ on the collocation points $j/N$, for $0\leq j\leq N-1$. The following lemma states that $I_N$ is continuous for the Sobolev norms.
  
\begin{lemme}
For all $0\leq \beta\leq 1$, there exists $c>0$ such that, for all $u\in H_N$, 
\[c^{-1}\|I_N(u)\|_{H^\beta}\leq\|u\|_{H_N^\beta}\leq c\|I_N(u)\|_{H^\beta}.\]
\end{lemme}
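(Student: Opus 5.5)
Both norms are diagonal in Fourier variables, so the plan is to compare them mode by mode. By construction of $I_N$, its Fourier coefficients are $\widehat{I_N(u)}(k)=\widehat{u}(k)$ for $|k|\le (N-1)/2$ and vanish otherwise. Parseval therefore gives
\[\|I_N(u)\|_{H^\beta}^2=\sum_{|k|\le (N-1)/2}(1+4\pi^2k^2)^\beta|\widehat{u}(k)|^2 .\]
The first step is to express $\|u\|_{H_N^\beta}$ in the same variables. Using the identity $|\widehat{u}(m)|^2+|\widehat{u}(-m)|^2=\langle u,\varphi_{m,N}\rangle^2+\langle u,\psi_{m,N}\rangle^2$ recalled above, and checking the normalisation between $\widehat{u}$ and the $H_N$ inner product (which only changes a fixed constant), we get
\[\|u\|_{H_N^\beta}^2\simeq\sum_{|k|\le (N-1)/2}(1+\lambda_{|k|,N})^\beta|\widehat{u}(k)|^2 .\]

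The second step is to compare the weights uniformly in $N$. By Lemma 2.9(c) of \cite{Blount91}, $c^{-1}\lambda_m\le\lambda_{m,N}\le c\lambda_m$ for $1\le m\le (N-1)/2$. This is the elementary bound $\frac{4}{\pi^2}x^2\le\sin^2 x\le x^2$ for $x=\pi m/N\in[0,\pi/2]$, applied to $\lambda_{m,N}=4N^2\sin^2(\pi m/N)$. It implies $c'^{-1}(1+\lambda_m)\le 1+\lambda_{m,N}\le c'(1+\lambda_m)$, also for $m=0$. Raising to the power $\beta\in[0,1]$ preserves the two-sided bound with constant $c'^\beta$. Summing over $k$ then yields both inequalities of the statement with a constant independent of $N$.

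The only delicate point is the normalisation bookkeeping. One has to check that $\widehat{u}$ as defined (with the factor $1/N$) matches $\langle u,\varphi_{m,N}\rangle$ up to an $N$-independent factor, since $\langle\cdot,\cdot\rangle$ is the $L^2$ product on step functions. One also has to handle $m=0$ separately, because of the convention $\psi_{0,N}=0$. There is no genuine analytic obstacle here: the restriction $\beta\le 1$ is not even needed, only $\beta\ge 0$, or any real $\beta$, since the weights are equivalent.
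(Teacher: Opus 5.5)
Your proof is correct, and it takes a somewhat different route from the paper's. The paper proves only the endpoint cases. It gets $\beta=0$ from Parseval. It gets $\beta=1$ by writing $\|u\|_{H_N^1}^2=\|u\|_{H_N^0}^2+\|\nabla_N^+u\|_{H_N^0}^2$, computing $\widehat{\nabla_N^+u}(k)=N(e^{2i\pi k/N}-1)\widehat{u}(k)$, and comparing $|N(e^{2i\pi k/N}-1)|^2$ with $(2\pi k)^2$ uniformly in $N$. It then reaches $0<\beta<1$ by interpolation.

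Since $|N(e^{2i\pi k/N}-1)|^2=4N^2\sin^2(\pi k/N)=\lambda_{|k|,N}$, the paper's key estimate is exactly your weight comparison at $\beta=1$. You instead use that both norms are defined spectrally, as Fourier multiplier norms. The pointwise equivalence of the symbols $1+\lambda_{|k|,N}$ and $1+4\pi^2k^2$ therefore transfers directly to every power $\beta$.

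Your route gives three things the paper's does not. First, explicit constants: for $\beta\ge 0$, $(4/\pi^2)^{\beta}\|I_N(u)\|_{H^\beta}^2\le\|u\|_{H_N^\beta}^2\le\|I_N(u)\|_{H^\beta}^2$. Second, validity for every real $\beta$, which explains why the restriction $\beta\le 1$ (which the paper itself calls artificial) plays no role. Third, it avoids the interpolation step, which the paper does not detail and which requires identifying the relevant interpolation spaces with constants uniform in $N$.

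The normalisation bookkeeping you flag works out with no loss at all. For $m\ge 1$, $\langle u,\varphi_{m,N}\rangle^2+\langle u,\psi_{m,N}\rangle^2=2|\widehat{u}(m)|^2=|\widehat{u}(m)|^2+|\widehat{u}(-m)|^2$. For $m=0$, $\langle u,\varphi_{0,N}\rangle^2=|\widehat{u}(0)|^2$. So your $\simeq$ is in fact an equality. The identity as stated in the paper would be off by a factor $2$ at $m=0$, which is the point you correctly isolate.
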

\begin{proof}
From Parseval' identity, we have $\|I_N(u)\|_{L^2} = \|u\|_{H_N^0}$. On the other hand,  we have $\|u\|_{H_N^1}^2 = \|u\|_{H_N^0}^2+ \|\nabla_N^+u\|_{H_N^0}^2$. For all $-(N-1)/2\leq k\leq (N-1)/2$, we have
\[\widehat{\nabla^+_Nu}(k) = N(e^{2i\pi k/N}-1)\widehat{u}(k).\]
Then, we have 
\begin{align*}
\|\nabla_N^+u\|_{H_N^0}^2 
&= \sum_{k=-(N-1)/2}^{(N-1)/2} \left|\widehat{\nabla^+_Nu}(k)\right|^2\\
&= \sum_{k=-(N-1)/2}^{(N-1)/2}|2\pi k|^2\left|\frac{e^{2i\pi k/N}-1}{2\pi k/N}\right|^2|\widehat{u}(k)|^2\\
\end{align*}
Hence, there exists $C>0$ such that  
\[C^{-1} \|\nabla_N^+u\|_{H_N^0}^2 \leq \sum_{k=-(N-1)/2}^{(N-1)/2}|2\pi k|^2|\widehat{u}(k)|^2\leq C \|\nabla_N^+u\|_{H_N^0}^2 .\]
To conclude on the $H_N^1$ case, let us remark that
\[ \sum_{k=-(N-1)/2}^{(N-1)/2}|2\pi k|^2|\widehat{u}(k)|^2 = \|\nabla I_N(u)\|^2_{L^2}.\]
Therefore, we have proved the result for $\beta = 0$ and $\beta=1$. The general case is obtained by linear interpolation.
\end{proof}

The upper bound $\beta\leq 1$ is artificial. In order to extend the result, up to $\beta = 2$, all we need to do is a control over $\|I_N(u)\|_{H^2}$. It resumes as controlling $\|\Delta_N u)\|_{H_N^0}$, with similar arguments. The interpolation $I_N$ allows working with classical Sobolev spaces, and their classical estimates. We obtain our second product rule.

\begin{prop}\label{lmm:bbb}
For all $1/2<\beta\leq 1$, there exists $c>0$ such that $u,v\in H_N$
\[\|uv\|_{H_N^\beta}\leq c \|u\|_{H_N^\beta}\|v\|_{H_N^\beta}.\]
\end{prop}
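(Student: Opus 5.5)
Since $\beta\le 1$, I will not attempt a direct Fourier-side algebra estimate. Instead I will use the interpolation $I_N$ together with the preceding lemma, which gives the equivalence $\|u\|_{H_N^\beta}\simeq\|I_N(u)\|_{H^\beta}$ uniformly in $N$. Two ingredients from the classical setting will be used: $H^\beta$ is an algebra for $\beta>1/2$, and the values at the collocation points $j/N$ determine $I_N(uv)$.

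The key identity is
\[I_N(uv)=\mathcal P_N\bigl(I_N(u)I_N(v)\bigr),\]
where $\mathcal P_N$ is the aliasing (folding) operator. It sends the trigonometric polynomial $I_N(u)I_N(v)$, which has degree at most $N-1$, to the unique polynomial of degree at most $(N-1)/2$ with the same values at the points $j/N$. In Fourier terms, a frequency $k$ with $|k|\le N-1$ is sent to $k\mp N$, the representative in $[-(N-1)/2,(N-1)/2]$. With this identity, the estimate reduces to two bounds:
\[\|uv\|_{H_N^\beta}\le c\|I_N(uv)\|_{H^\beta}\le c'\|I_N(u)I_N(v)\|_{H^\beta}\le c''\|I_N(u)\|_{H^\beta}\|I_N(v)\|_{H^\beta},\]
after which the norm equivalence for $u$ and $v$ finishes the proof.

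The main obstacle is the middle inequality, namely that aliasing is bounded on $H^\beta$ for polynomials of degree at most $N-1$, uniformly in $N$. The plan is to check this mode by mode. For a frequency $k$ with $(N-1)/2<|k|\le N-1$, the folded frequency $k'=k\mp N$ satisfies $|k'|\le (N-1)/2<|k|$. Hence $(1+|k'|^2)^\beta\le(1+|k|^2)^\beta$. Each target frequency receives contributions from at most two source frequencies (itself and one folded one). Applying $(a+b)^2\le 2a^2+2b^2$ then gives $\|\mathcal P_N f\|_{H^\beta}^2\le 2\|f\|_{H^\beta}^2$ for every $\beta\ge0$, with no dependence on $N$.

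The last inequality is the classical algebra property of $H^\beta$ on the torus for $\beta>1/2$. I would quote it from the literature (as in \cite{BF}, \cite{BK}). Alternatively, for a self-contained proof I would use the weighted Young/Cauchy--Schwarz argument on the Fourier convolution. The argument rests on the bound $(1+|k|^2)^{\beta/2}\le C\bigl((1+|k-m|^2)^{\beta/2}+(1+|m|^2)^{\beta/2}\bigr)$ together with the summability of $\sum_m(1+|m|^2)^{-\beta}$, which holds precisely because $\beta>1/2$.
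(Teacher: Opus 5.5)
Your proposal is correct and follows the paper's route. Both arguments pass to the trigonometric interpolant $I_N$, compare $uv$ with $I_N(u)I_N(v)$, apply the classical algebra property of $H^\beta$ for $\beta>1/2$, and return via the norm-equivalence lemma. The difference is in how two steps are justified: the paper cites \cite[Lemma 3.9]{BK} for $\|uv\|_{H_N^\beta}\le c\|I_N(u)I_N(v)\|_{H^\beta}$ and uses a tame estimate plus $H^\beta\subset L^\infty$, whereas you prove the aliasing bound directly (since $I_N(u)I_N(v)$ has degree at most $N-1$, each mode folds at most once onto a lower frequency, giving constant $\sqrt{2}$ for every $\beta\ge 0$), which makes your argument self-contained.
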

\begin{proof}
Let $u,v\in H_N$. Then, $I_N(u)I_N(v)$ is an interpolation of $uv$. Using \cite[Lemma 3.9]{BK}, for all $\beta>1/2$, there exists $c(\beta)$ such that \[\|uv\|_{H_N^\beta}\leq c(\beta) \|I_N(u)I_N(v)\|_{H^\beta}.\] 
Using a tame estimate, for classical Sobolev space, and our first lemma, we have 
\begin{align*}
\|I_N(u)I_N(v)\|_{H^\beta}
&\leq c\|I_N(u)\|_{\infty}\|I_N(v)\|_{H^\beta} + c\|I_N(u)\|_{H^\beta}\|I_N(v)\|_{\infty}\\
&\leq c \|I_N(u)\|_{H^\beta}\|I_N(u)\|_{H^\beta}\\
\end{align*} 
We conclude using the previous lemma.
\end{proof}

By bilinear interpolation between Lemma \ref{lmm:00b} and Lemma \ref{lmm:bbb}, we are now able to prove a product rule in $H_N^\alpha\times h_N^\beta$, for all $(\alpha,\beta)\in [0,1]^2$.


\begin{proof}[Proof of Theorem \ref{prop:prod+}]
Without any loss of generality, let us assume that $0\leq\alpha\leq \beta$.

$(i)$.  We have $1/2<\beta\leq 1$. For $\theta= (\beta-\alpha)/\beta\in[0,1]$, we have \[(\alpha, \beta) = \theta(0,\beta) +(1-\lambda)(\beta, \beta).\]
Form the previous lemmata, we have
\begin{equation*}
\left\{\begin{aligned}
\|uv\|_{H_N^0} &\leq C  \|u\|_{H_N^0}\|uv\|_{H_N^\beta}\\
\|uv\|_{H_N^\beta} &\leq C  \|u\|_{H_N^\beta}\|uv\|_{H_N^\beta}\\
\end{aligned}\right.
\end{equation*}
We use bilinear interpolation (see \cite{BL} exercise 5, chapter 3, \cite{LP}). Since  $[H_N,H_N^\beta]_{\alpha/\beta,2}$, the real interpolation space of $H_N$ and $H_N^\beta$, is $H_N^\alpha$ and since  $[H_N,H_N^\beta]_{\alpha/\beta,1}$ is embedded in $H_N^\gamma$ for all $\gamma< \theta\times0 + (1-\theta)\beta = \alpha$, we have
\[\|uv\|_{H_N^\gamma} \leq C  \|u\|_{H_N^\alpha}\|uv\|_{H_N^\beta}.\]

$(ii)$ Let $0<\varepsilon$, and $\theta = \frac{2\beta}{1+\varepsilon}$. We have 
\[(\alpha, \beta) = \theta\left(\frac{\alpha}{\beta}\frac{1+\varepsilon}{2}, \frac{1+\varepsilon}{2}\right) +(1-\theta)(0,0).\]
Let $\gamma=\alpha(1+\varepsilon)/(2\beta)-\varepsilon$. From  Lemma \ref{lmm:00b} and point $(i)$, we have
\begin{equation*}
\left\{\begin{aligned}
&\|uv\|_{H_N^\gamma} \leq C  \|u\|_{H_N^{\frac{2\alpha}{\beta(1+\varepsilon)}}}\|uv\|_{H_N^{\frac{1+\varepsilon}{2}}}\\
&\|uv\|_{H_N^{-\frac{1+\varepsilon}{2}}} \leq C  \|u\|_{H_N^0}\|uv\|_{H_N^0}\\
\end{aligned}\right.
\end{equation*}
Then for all $\eta< \theta\gamma - (1-\theta)(1+\varepsilon)/2 = \alpha+\beta-1/2-\varepsilon(1/2+2\beta/(1+\varepsilon))$, we have, again by bilinear interpolation,
\[\|uv\|_{H_N^\eta} \leq C  \|u\|_{H_N^\alpha}\|uv\|_{H_N^\beta}.\]
This ends the proof.
\end{proof}

To conclude, we obtain dual version, in $H_N^{-\alpha}\times H_N^\beta$.

\begin{proof}[Corollary \ref{prop:prod+}]
For all $\varphi\in H_N$, we have
\[\langle uv, \varphi\rangle \leq \|u\|_{H_N^{-\alpha}} \|v\varphi\|_{H_N^\alpha}.\]
From point $(i)$ from the previous lemma, for all $0\leq \beta,\gamma$ such that $0<\alpha< \beta\wedge\gamma$ and $1/2<\beta\vee\gamma$, we have
\[\langle uv, \varphi\rangle \leq \|u\|_{H_N^{-\alpha}}\|v\|_{H_N^\beta}\|\varphi\|_{H_N^\gamma}.\]
This proves point $(i)$. The second point is prove similarly, using the product rule $(ii)$.
\end{proof}

\section{Existence and uniqueness of the solution}
\label{sec:existunic}
In this section, we prove that for every initial data $(u(0),v(0)$ is $(L^\infty\cap H^\beta)\times H^{-\alpha}$, there exists a unique solution to Equation \eqref{eq:uv}.

\subsection*{Existence} 
The proof of the existence of a solution follows the same steps as Proposition \ref{prop_w}. We define the truncated field $R_C^M$ and $R_D^M$, with compactly supported $b_C^M$ and $b_D^M$. Let $\TT =(\TT_1,\TT_2)$ be the operator defined on $\CC\left([0,T], (L^\infty\cap H^\beta)\times H^{-\alpha}\right)$ as 
\begin{align*}
\TT_1(u,v)(t) =& e^{\Delta t}u_0 + \int_0^t e^{\Delta (t-s)} R_C^M(u(s), v(s))\, ds\\
\TT_2(u,v)(t) =& v_0 + \int_0^t  R_D^M(u(s), v(s))\, ds\\
\end{align*} 
First, we remark that $\CC\left([0,T], (L^\infty\cap H^\beta)\times H^{-\alpha}\right)$ is stable under $\TT$. Then, we show that for $0<T_0\leq T$ small enough, the restriction of $\TT$ to $\CC([0,T_0])$ is a contraction. This allows to construct a solution $(u,v)$ on $[0,T_0]$ and on $[0,T]$ by iteration. To conclude, according to Assumption \ref{ass:RC} and the maximum principle, $u$ is bounded in $\L^\infty$ by $\max\{M, \|u(0)\|_\infty\}$. It follows that $b_C^M(u) = b_C(u)$, and  $b_D^M(u) = b_D(u)$. So $(u,v)$ is also a solution for the initial problem, without truncation.

\subsection*{Uniqueness} 
Let $(u_1,v_1), (u_2, v_2)\in\CC\left([0,T], (L^\infty\cap H^\beta)\times H^{-\alpha}\right)$, be two solutions of Equation \eqref{eq:uv}, with the same initial data $(u_0,v_0)$. Let $0<\varepsilon,\gamma$, such that $1/2+\varepsilon+\gamma<2$ and $-\gamma<1/2+\varepsilon-\alpha-1/2$. For all $0\leq t\leq T$, we have

\begin{align*}
\|u_1(t)-u_2(t)\|_{L^\infty\cap H^\beta}  
\leq & \|u_1(t)-u_2(t)\|_{H^{1/2+\varepsilon}} \\
\leq & C\int_0^t (t-s)^{-(1/2+\varepsilon+\gamma)/2}\|u_1(s)\|_{H^{\beta}}\|v_1(s)-v_2(s)\|_{H^{-\alpha}}\, ds\\
& + C\int_0^t (t-s)^{-(1/2+\varepsilon+\gamma)/2}\|(u_1(s)-u_2(s)\|_{H^{\beta}}\|v_2(s)\|_{H^{-\alpha}}\, ds\\
& + C\int_0^t (t-s)^{-(1/2+\varepsilon+\alpha)/2}\|v_1(s)-v_2(s)\|_{H^{-\alpha}}\, ds\\
& +C\int_0^t (t-s)^{-(1/2+\varepsilon)/2}\|u_1(s)-u_2(s)\|_{\infty}\, ds\\
\leq & + C\int_0^t (t-s)^{-(1/2+\varepsilon+\gamma)/2}\|(u_1(s)-u_2(s)\|_{H^{\beta}}\, ds\\
& + C\int_0^t (t-s)^{-(1/2+\varepsilon+\alpha)/2}\|v_1(s)-v_2(s)\|_{H^{-\alpha}}\, ds\\
& +C\int_0^t (t-s)^{-(1/2+\varepsilon)/2}\|u_1(s)-u_2(s)\|_{\infty}\, ds\\
\end{align*} 
Lastly, we control the difference between $v_1$ and $v_2$ with
\[\|v_1(t)-v_2(t)\|_{H^{-\alpha}}
\leq   C\int_0^t \|v_1(s)-v_2(s)\|_{H^{-\alpha}}\, ds
 +C\int_0^t \|u_1(s)-u_2(s)\|_{\infty}\, ds\]
Then, we have proved that
\begin{align*}
&\|(u_1(t),v_1(t))-(u_2(t), v_2(t))\|_{(L^\infty\cap H^\beta)\times H^{-\alpha}} \\
&\quad\le C\int_0^t(t-s)^{-(1/2+\varepsilon+\gamma)/2}\|(u_1(s),v_1(s))-(u_2(s), v_2(s))\|_{(L^\infty\cap H^\beta)\times H^{-\alpha}}\, ds.
\end{align*} 
The proof of uniqueness is concluded by a singular version of Gr\"onwall's inequality, as in Proposition \ref{prop:uNwN}.

\bibliographystyle{abbrv}
\bibliography{biblio}
\end{document}